\newif\iffinal
\newcommand{\parentheses}[4][]%
{\mathopen{}\ifthenelse{\equal{#1}{}}{\left#2}{\csname#1\endcsname#2}%
    {#4}\mathclose{}\ifthenelse{\equal{#1}{}}{\right#3}{\csname#1\endcsname#3}}
\newcommand{\foperator}[1]{\mathop{{#1}\empty{}}}
\newcommand{\f}[3][]{\ensuremath{\foperator{#2}\parentheses[#1]{(}{)}{#3}}}
\newcommand\undisp[1]{\bgroup\@displayfalse #1\egroup}
\newcommand{\iverson}[1]{\ensuremath{\parentheses{[}{]}{#1}}}
\WithSuffix\newcommand{\iverson}*[1]{\ensuremath{\iverson{\text{\normalfont #1}}}}
\newcommand{\abs}[2][]{\ensuremath{%
    \parentheses[#1]{\lvert}{\rvert}{#2}}}
\newcommand{\floor}[1]{\ensuremath{\parentheses{\lfloor}{\rfloor}{#1}}}
\newcommand{\ceil}[1]{\ensuremath{\parentheses{\lceil}{\rceil}{#1}}}
\newcommand{\Ohsymbol}{O}
\newcommand{\Oh}[2][]{\ensuremath{\f[#1]{\Ohsymbol}{#2}}}
\newcommand{\set}[1]{\ensuremath{\parentheses{\{}{\}}{#1}}}
\WithSuffix\newcommand{\set}*[2]{\ensuremath{%
\setmiddlesymbol\thinspace{#2}\right\}}}
\newcommand{\calC}{\mathcal{C}}
\newcommand{\Upstep}{\nearrow}
\newcommand{\CUpstep}{\{\Upstep\}}
\newcommand{\Downstep}{\searrow}
\newcommand{\CDownstep}{\{\Downstep\}}
\newcommand{\CDyckpath}{\accentset{\bigtriangleup}{\mathcal{D}}}
\newcommand{\CReflectedDyckpath}{\underaccent{\bigtriangledown}{\mathcal{D}}}
\newcommand{\CZero}{\{\bullet\}}
\newcommand{\Ctimes}{\times}
\newcommand{\CUpblock}{\Upstep\!\CDyckpath\!\Downstep}
\newcommand{\CDownblock}{\Downstep\!\CReflectedDyckpath\!\!\Upstep}
\newcommand{\CTerminalUpblock}{\Upstep\!\CDyckpath}
\newcommand{\E}[2][]{\f[#1]{\mathbb{E}}{#2}}
\renewcommand{\P}[2][]{\f[#1]{\mathbb{P}}{#2}}
\DeclareMathOperator{\artanh}{artanh}
\newcommand{\Ccv}{C^{\mathrm{cv}}}
\newcommand{\Pcv}{P^{\mathrm{cv}}}
\newcommand{\Pct}{P^{\mathrm{ct}}}
\newcommand{\N}{\ensuremath{\mathbbm{N}}}
\newcommand{\Z}{\ensuremath{\mathbbm{Z}}}
\newcommand{\eps}{\ensuremath{\varepsilon}}
\newcommand{\Hodd}{H^{\mathrm{odd}}}
\newcommand{\Halt}{H^{\mathrm{alt}}}
\newcommand{\dd}{\mathrm{d}}
\renewcommand{\MR}[1]{}
\newcommand{\TODO}[1]{}
\newcommand{\TODO}[1]
{\par\fbox{\begin{minipage}{0.9\linewidth}\textbf{TODO:} #1\end{minipage}}\par}
\newtheorem{thms}{Thm}[section] 
\theoremstyle{plain}
\newtheorem*{theorem*}{Theorem}
\newtheorem{theorem}[thms]{Theorem}
\newtheorem{lemma}[thms]{Lemma}
\newtheorem{corollary}[thms]{Corollary}
\newtheorem{proposition}[thms]{Proposition}
\theoremstyle{definition}
\newtheorem{definition}[thms]{Definition}
\theoremstyle{remark}
\newtheorem{remark}[thms]{Remark}
\newtheorem*{observation*}{Observation}
\newtheorem*{example*}{Example}
\newtheoremstyle{strategy}{}{}{\itshape}{}{\bfseries}{}{.5em}{\thmname{#1} \thmnote{#3}. }
\theoremstyle{strategy}
\newtheorem*{strategy}{Strategy}
\newcommand{\partref}{\ref}
\numberwithin{equation}{section}
\numberwithin{figure}{section}
\newcommand{\strgy}{\mathfrak{S}}  
\begin{document}

\title[Dual-Pivot Quicksort: Optimality, Analysis and Zeros of Lattice Paths]{%
  Dual-Pivot Quicksort: Optimality, Analysis and 
  Zeros of Associated Lattice Paths}

\author[M.~Aumüller]{Martin Aumüller}
\address{Martin Aumüller, IT University of Copenhagen, Rued Langgaards Vej 7, 2300 Copenhagen, Denmark}
\email{\href{mailto:maau@itu.dk}{maau@itu.dk}}

\author[M.~Dietzfelbinger]{Martin Dietzfelbinger}
\address{Martin Dietzfelbinger, Fakultät für Informatik und Automatisierung, Technische Universität Ilmenau, Helmholtzplatz 5, 98693 Ilmenau, Germany}
\email{\href{mailto:martin.dietzfelbinger@tu-ilmenau.de}{martin.dietzfelbinger@tu-ilmenau.de}}

\author[C.~Heuberger]{Clemens Heuberger}
\address{Clemens Heuberger,
  Institut f\"ur Mathematik, Alpen-Adria-Universit\"at Klagenfurt,
  Universit\"atsstra\ss e 65--67, 9020 Klagenfurt am W\"orthersee, Austria}
\email{\href{mailto:clemens.heuberger@aau.at}{clemens.heuberger@aau.at}}

\author[D.~Krenn]{Daniel Krenn}
\address{Daniel Krenn,
  Institut f\"ur Mathematik, Alpen-Adria-Universit\"at Klagenfurt,
  Universit\"atsstra\ss e 65--67, 9020 Klagenfurt am W\"orthersee, Austria}
\email{\href{mailto:math@danielkrenn.at}{math@danielkrenn.at} \textit{or}
  \href{mailto:daniel.krenn@aau.at}{daniel.krenn@aau.at}}

\author[H.~Prodinger]{Helmut Prodinger}
\address{Helmut Prodinger, Department of Mathematical Sciences, Stellenbosch University, 7602 Stellenbosch,
 South Africa}
\email{\href{mailto:hproding@sun.ac.za}{hproding@sun.ac.za}}

\thanks{C.~Heuberger and D.~Krenn are supported by the
   Austrian Science Fund (FWF): P\,24644-N26
   and by the Karl Popper Kolleg ``Modeling--Simulation--Optimization''
   funded by the Alpen-Adria-Universität Klagenfurt and
   by the Carinthian Economic Promotion Fund (KWF).\\
   H.~Prodinger is supported by an incentive grant of the National
   Research Foundation of South Africa.}

 \thanks{We thank the reviewers for carefully reading the manuscript and
   for their suggestions.}

 \thanks{An extended abstract containing the ideas of the asymptotic
   analysis of the dual-pivot quicksort strategies ``Count'' and
   ``Clairvoyant'', as well as the lattice path analysis of this
   article appeared
   as~\cite{Aumueller-Diezfelbinger-Heuberger-Krenn-Prodinger:2016:quicksort-paths-arxiv},
   and an appendix containing proofs is available
   as~\href{http://arxiv.org/abs/1602.04031v1}{arXiv:1602.04031v1}. \\
   This article contains additionally a proof that ``Count'' is
   indeed the optimal strategy. This led to a major restructuring;
   the analysis now focuses on this strategy. Moreover some proofs
   were simplified.}

\keywords{
  Dual-pivot quicksort,
  lattice paths,
  asymptotic enumeration,
  combinatorial identity%
}
\subjclass[2010]{%
05A16, 
68R05, 
68P10, 
68Q25, 
68W40.
}

\begin{abstract}
  We present an average case analysis of a variant of dual-pivot
  quicksort. We show that the used algorithmic partitioning strategy 
  is optimal, i.e., it minimizes the expected number of key comparisons.
  For the analysis,
  we calculate the expected number of comparisons exactly as well as
  asymptotically, in particular, we provide exact expressions for the
  linear, logarithmic, and constant terms.
  
	An essential step is the analysis of zeros of lattice paths in a
  certain probability model. Along the way a combinatorial identity is
  proven.
\end{abstract}

\maketitle


\section{Introduction}
\label{sec:intro}


Dual-pivot quicksort~\cite{Sedgewick:1975:thesis,WildNN15,AumullerD15} 
is a family of sorting algorithms related to the
well-known quicksort algorithm. 
In order to sort an input sequence $(a_1,\ldots,a_n)$
of distinct elements, dual-pivot quicksort algorithms work as follows. 
(For simplicity we forbid repeated elements in the input.)
If $n \leq 1$, there is nothing to do.
If $n\ge2$, two of the input elements are selected as pivots.
Let $p$ be the smaller and $q$ be the larger pivot. 
The next step is to partition the remaining elements into
\begin{itemize}
\item the elements smaller than $p$ (``small elements''),
\item the elements between $p$ and $q$ (``medium elements''), and
\item the elements larger than $q$ (``large elements'').
\end{itemize}
Then the procedure is applied recursively to these three groups to complete the sorting. 

The cost measure used in this work is the number of comparisons between elements.
As is common, we will assume the input sequence is in random order, 
which means that each permutation of the $n$ elements occurs with
probability $\nicefrac{1}{n!}$\,.
With this assumption we may, without loss of generality, choose $a_1$
and $a_n$ as the pivots. 
Even in this setting there are different dual-pivot quicksort algorithms;
their difference lies in the way the partitioning is organized,
which influences the partitioning cost. 
More specifically, when a non-pivot element is considered, the strategy has to decide
whether it is compared with $p$ first or with $q$ first.
This is in contrast to standard quicksort with one pivot, where the
partitioning strategy does not influence the cost---in partitioning always
one comparison is needed per non-pivot element.
In dual-pivot quicksort, the average cost (over all permutations) of partitioning
and of sorting can be analyzed only when the partitioning strategy is fixed.

Only in 2009, Yaroslavskiy, Bentley, and Bloch~\cite{Yaroslavskiy-Mailinglist} described a dual-pivot
quicksort algorithm that makes $1.9 n \log n + O(n)$ key comparisons~\cite{WildNN15}.%
\footnote{In this paper ``$\log$'' denotes the natural logarithm to base $e$.}
This beats the classical quicksort algorithm~\cite{Hoare62}, which needs
$2(n+1)H_n-4n = 2n \log n + O(n)$ comparisons on average. (Here $H_n$ denotes the $n$-th harmonic number. We refer to the book
\cite[5.2.2 (24)]{Knuth:1998:Art:3} for this asymptotic as well as
an exact result.) Wild's Ph.D.~thesis~\cite{Wild:2016:Phd-dual-pivot} discusses implications of the Yaroslavskiy--Bentley--Bloch-algorithm (YBB) and many variants in detail. 

In \cite{AumullerD15}, the first two authors of this article described
the full design space for dual-pivot quicksort algorithms with respect to
counting element comparisons.
Among others, they studied a special partitioning 
strategy called ``Count'': when classifying a new element this strategy uses the large pivot $q$ for the first comparison 
if and only if among the elements seen up to this point the
number of large elements exceeds the number of small ones.
On the basis of a suitable probability model it can be argued that the relative frequencies
of small and large elements seen so far are maximum-likelihood estimators 
for the probability of the next element being small respectively large. 
In this sense this strategy seems quite natural. 
It was shown {in~\cite{AumullerD15}} that dual-pivot quicksort carries out $1.8 n \log n + O(n)$
key comparisons on average when this partitioning strategy is used and that
the main term in this formula is optimal/minimal. (They showed that no other
partitioning strategy can have a smaller main term, even if this strategy
has access to a certain oracle, cf.\ their strategy ``Clairvoyant''.)

One purpose of this paper is to make the expected number of
comparisons in the algorithm ``Count'' precise,
both for partitioning and for the resulting dual-pivot quicksort
variants. Moreover, we will show that ``Count'' is optimal among
all algorithmic strategies; see Part~\partref{sec:count-is-optimal} for details.

Already in \cite{AumullerD15} it was noted that 
the exact value of the expected partitioning cost
(i.e., the number of comparisons) of the mentioned strategy
depends on the expected number of the zeros of certain lattice paths
(Part~\partref{sec:lattice-paths}).
A complete understanding of this situation is the basis for our analysis
of dual-pivot quicksort, which appears in Part~\partref{sec:quicksort}.


\section{Overview and Results}
\label{sec:results}


This work is split into three parts. We give a brief overview on the
main results of each of these parts here.

In order to formulate the main results, we have to fix some notation.
We use Iverson's convention
\begin{equation*}
  \iverson{\mathit{expr}} =
  \begin{cases}
    1&\text{ if $\mathit{expr}$ is true},\\
    0&\text{ if $\mathit{expr}$ is false},
  \end{cases}
\end{equation*}
popularized by Graham, Knuth, and Patashnik~\cite{Graham-Knuth-Patashnik:1994}.

The harmonic numbers and variants will be denoted by
\begin{equation*}
  H_n = \sum_{m=1}^n \frac{1}{m}, \qquad
  \Hodd_n = \sum_{m=1}^n \frac{\iverson*{$m$ odd}}{m}
  \qquad\text{and}\qquad
  \Halt_n = \sum_{m=1}^n \frac{(-1)^m}{m}.
\end{equation*}
Of course, there are relations between these three definitions such as
$\Halt_n=H_n - 2\Hodd_n$ and $\Hodd_n + H_{\floor{n/2}}/2=H_n$, but it will
turn out to be much more convenient to use all three notations.
Asymptotically, $H_n = \log n + \Oh{1}$, $\Hodd_n = (\log n)/2+\Oh{1}$, and $\Halt_n=-\log2+\Oh{1/n}$; see Lemma \ref{lem:harmonic-asy} for details.

We denote  multinomial coefficients by
\begin{equation*}
  \binom{n}{c_1,c_2,\ldots,c_h} = \frac{n!}{c_1!\,c_2!\dots c_h!}.
\end{equation*}


\subsection*{Part~\partref{sec:lattice-paths}:
  \nameref{sec:lattice-paths}}

In the first part we analyze the behavior of certain random lattice paths of a fixed
length~$n$.
They model a particular aspect of the partition procedure
of dual-pivot quicksort.
The probability model is as follows:
A path starts at the origin~$(0,0)$,
goes steps $(1,+1)$ and $(1,-1)$, and
stops after $n$ steps;
the ending point
is chosen from the set $\set{(n,-n),(n,-n+2),\dots,(n,n-2),(n,n)}$
of feasible points uniformly at random.
For each ending point,
all paths from the origin to this point
are equally likely. We are interested in
the number of zeros, denoted by the random
variable~$Z_n$, of such paths.

Lattice path enumeration has a long tradition. An early reference is
\cite{Mohanty:1979:lattic};
a recent survey paper on the subject is \cite{Krattenthaler:2015:lattic}.
We emphasize that the probability model of the lattice paths studied in this
paper differs from the standard one where all paths of equal length are equally likely.

An exact formula for the expected number $\E{Z_n}$ of zeros is derived
in two different ways (see identity~\eqref{eq:id-intro} for these formul\ae):
On the one hand, we use the symbolic method and
generating functions (see Section~\ref{sec:gf}), which gives the
result in form of a double sum (Theorem~\ref{thm:paths-gf-zeros}).
This machinery extends well to higher moments and also allows us to
obtain the distribution. The exact distribution is given in
Theorem~\ref{thm:distribution}; its limiting behavior
is given by a discrete distribution: we have
\begin{equation*}
  \lim_{n\to\infty} \P{Z_n = r} = \frac{1}{r(r+1)}.
\end{equation*}

On the other hand, a more probabilistic approach gives the 
expectation~$\E{Z_n}$ as the simple single sum
\begin{equation}\label{eq:simple-sum-intro}
  \E{Z_n} = \sum_{m=1}^{n+1} \frac{\iverson*{$m$ odd}}{m}=\Hodd_{n+1};
\end{equation}
see Section~\ref{sec:prob} for more details. From this, the
asymptotic behavior $\E{Z_n} = \frac12 \log n + \Oh{1}$ can be extracted
(Section~\ref{sec:asymptotics}).

The two approaches give rise to the identity
\begin{equation}\label{eq:id-intro}
  \sum_{m=1}^{n+1} \frac{\iverson*{$m$ odd}}{m}
  =
  \frac{4}{n+1}
  \sum_{0\leq k < \ell < \ceil{n/2}} \frac{\binom{n}{k}}{\binom{n}{\ell}}
  + \iverson*{$n$ even} \frac{1}{n+1}
  \biggl(\frac{2^n}{\binom{n}{n/2}} - 1\biggr) + 1;
\end{equation}
the double sum~\eqref{eq:id-intro} equals the
single sum~\eqref{eq:simple-sum-intro} of
Theorem~\ref{thm:paths-prob} by combinatorial considerations. One
might ask about a direct proof of this interesting combinatorial identity. This can be achieved
by methods related to hypergeometric sums; the computational proof
is presented in Section~\ref{sec:identity}. We also provide a completely
elementary proof which is ``purely human''.


\subsection*{Part~\partref{sec:quicksort}:
  \nameref{sec:quicksort}}

One main result of this work analyzes key comparisons in the dual-pivot
quicksort algorithm that uses the optimal (see
Part~\partref{sec:count-is-optimal}) partitioning strategy
``Count''. Aumüller and Dietzfelbinger showed in \cite{AumullerD15}
that this algorithm requires
$1.8 n \log n + O(n)$ comparisons on average, which improves on 
the average number of comparisons in quicksort ($2n \log n + (2\gamma-4)n
+ 2\log n + O(1)$) and the 
recent dual-pivot algorithm of Yaroslavskiy et al.\@ ($1.9 n \log n + O(n)$; see \cite{WildNN15}). However, for real-world input sizes $n$ the (usually negative) factor in the linear term has a great influence on the comparison count. Our asymptotic result is stated as the following theorem.

\begin{theorem*}
  The average number of comparisons in the dual-pivot quicksort
  algorithm with a comparison-optimal partitioning strategy is
  \begin{equation*}
    \frac{9}{5} n \log n + A n + B \log n + C + \Oh{1/n}
  \end{equation*}
  as $n$ tends to infinity, with
  \begin{equation*}
    A = \frac95\gamma
    + \frac{1}{5} \log 2
    - \frac{89}{25}
    = -2.382\dots
  \end{equation*}
\end{theorem*}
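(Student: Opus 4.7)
The plan is to derive the expected comparison count $C_n$ for dual-pivot quicksort with the Count partitioning strategy via the classical divide-and-conquer recurrence, feed in the expected partitioning cost $P_n$ computed using Part~\partref{sec:lattice-paths}, and then expand the resulting closed form to the order required. First I would set up the recurrence: choosing the two pivots uniformly among the $\binom{n}{2}$ pairs of ranks, and noting that by symmetry each $C_k$ with $k\in\{0,\dots,n-2\}$ appears with multiplicity $3(n-k-1)$ in the sum over the three subproblem sizes, one obtains
\begin{equation*}
  C_n = P_n + \frac{6}{n(n-1)}\sum_{k=0}^{n-2}(n-k-1)\,C_k.
\end{equation*}
The standard generating-function computation for dual-pivot quicksort (iterated differencing, or equivalently solving a second-order Euler-type ODE for the generating function) turns this into the explicit representation
\begin{equation*}
  C_n = 2(n+1)\sum_{k=2}^{n}\frac{P_k}{(k+1)(k+2)} + \text{(contribution from initial values)},
\end{equation*}
so the analysis reduces to understanding $P_k$ with sufficient precision.

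Next I would evaluate $P_n$ explicitly for the Count strategy. Every non-pivot element costs either one or two key comparisons, and Count incurs one extra comparison precisely each time it ``guesses wrong'' about whether a new element is small or large. Conditioning on the multiset of small/medium/large labels of the non-pivots, the small/large subsequence becomes exactly the random lattice path analysed in Part~\partref{sec:lattice-paths}, and a short calculation expresses the expected number of wrong guesses as a rational linear combination of $n$, a constant, and $\E{Z_n}$. Substituting $\E{Z_n}=\Hodd_{n+1}$ from~\eqref{eq:simple-sum-intro} yields a closed form
\begin{equation*}
  P_n = \tfrac{3}{2}\,n + \beta\,\Hodd_{n+1} + \gamma_0 + \Oh{1/n},
\end{equation*}
whose leading coefficient $\tfrac{3}{2}$ is exactly what produces the $\tfrac{9}{5}n\log n$ main term after the displayed sum for $C_n$ is taken.

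Finally I would expand. Substituting this $P_k$ into the sum for $C_n$ splits it into pieces of the shapes $\sum_k \frac{k}{(k+1)(k+2)}$, $\sum_k \frac{1}{(k+1)(k+2)}$, and $\sum_k \frac{\Hodd_{k+1}}{(k+1)(k+2)}$, plus an $\Oh{1/n}$ tail. The first two evaluate telescopically; the third can be handled by Abel summation, producing combinations of $H_n$ and $\Hodd_n$. Applying Lemma~\ref{lem:harmonic-asy} (which gives $H_n = \log n + \gamma + \Oh{1/n}$ and $\Hodd_n = \tfrac12\log n + \tfrac12(\gamma+\log 2) + \Oh{1/n}$) and collecting coefficients of $n\log n$, $n$, $\log n$, constant, and $1/n$ yields the expansion stated in the theorem: the $\gamma$-part of $A$ comes from the constant in $H_n$; the $\tfrac15\log 2$-part comes from the $\log 2$ inside the $\Hodd_n$-summands; and the rational $-\tfrac{89}{25}$ is the accumulation of all remaining rational pieces.

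The main obstacle will be the bookkeeping needed to pin the constants down exactly. One must expand $P_n$ to order $\Oh{1/n}$ rather than merely $\Oh{1}$, because the prefactor $2(n+1)$ in the closed-form representation turns $\Oh{1/k^2}$ remainders into $\Oh{1/n}$ contributions that still affect the constant $C$ and, if mishandled, can spuriously shift $A$. Likewise the $\Hodd$-contributions must be reassembled correctly to yield exactly $\tfrac15\log 2$ in $A$ rather than some other multiple of $\log 2$. Everything else is a routine but careful cascade of harmonic-sum identities and asymptotic expansions.
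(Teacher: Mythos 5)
Your overall architecture (recurrence for $\E{C_n}$, exact partitioning cost via the lattice-path results, then harmonic asymptotics) is the same as the paper's, but the step that actually solves the recurrence is wrong. The explicit representation you invoke, $C_n = 2(n+1)\sum_{k=2}^{n}\frac{P_k}{(k+1)(k+2)}+(\text{initial values})$, is the solution of the \emph{single-pivot} recurrence $C_n=P_n+\frac2n\sum_{k<n}C_k$, not of the dual-pivot recurrence $C_n=P_n+\frac{6}{n(n-1)}\sum_{k=0}^{n-2}(n-1-k)C_k$ that you correctly wrote down. For the dual-pivot recurrence the Green's kernel behaves like $\frac{6n}{5k^2}$ rather than $\frac{2n}{k^2}$; the paper obtains it (Lemma~\ref{le:integration}) as $C(z)=(1-z)^3\int_0^z(1-t)^{-6}\int_0^t(1-s)^3P''(s)\,ds\,dt$, coming from the factorization $(\theta-2)(\theta+3)$ with $\theta f=(1-z)f'$. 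Plugging $P_k=\frac32k+\Oh{\log k}$ into \emph{your} kernel gives $3n\log n$, not $\frac95 n\log n$, and the ``initial values'' cannot repair this, since the homogeneous solutions are a multiple of $n+1$ plus a finitely supported sequence. So, as written, the collection of coefficients cannot yield the stated $A$, $B$, $C$; a small-$n$ check (e.g.\ $P_n=n$, giving $C_4=5$, $C_5=7.1$) already contradicts your formula.

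Two further points need attention even after fixing the kernel. First, the partitioning cost of ``Count'' is not simply a rational combination of $n$, a constant and $\E{Z_n}$: the paper's decomposition is $P_n=1+\frac32(n-2)+\frac12M+Z^{\nearrow}_{n-2-M}-\frac12\abs{L-S}$, so the relevant path statistic is the number of \emph{up-from-zero situations} of a path of \emph{random} length $n-2-M$ (Corollary~\ref{cor:exp-up-from-zero} gives $\frac12\Hodd_{n-2-m}$, which must then be averaged over the distribution of $M$), and one also needs $\E{\abs{L-S}}$, whose generating function involves $\artanh$. This is exactly what pins down $\beta=\frac12$ in $\E{P_n}=\frac32n+\frac12\Hodd_n-\frac{19}{8}+\Oh{1/n}$ (Proposition~\ref{proposition:expected-partitioning-costs}); leaving $\beta$ symbolic leaves the $\frac15\log 2$ in $A$ unestablished. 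Second, the theorem speaks of a \emph{comparison-optimal} strategy; the paper separately proves (Theorem~\ref{thm:count-optimal}) that ``Count'' minimizes the expected comparison count, a fact your sketch uses implicitly but does not argue. The decisive flaw, however, is the incorrect closed-form solution of the recurrence.
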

with the Euler--Mascheroni constant~$\gamma = 0.5772156649\!\dots\,$.
The constants $B$ and $C$ are explicitly given, too, and more
terms of the asymptotics are presented. The precise result is formulated
as Corollary~\ref{cor:count:cost:asy}.

In fact, we even get an exact expression for the average comparison
cost. The precise result is formulated as
Theorem~\ref{thm:count:cost}. The same analysis can be
carried out for the non-algorithmic---it has access to an oracle---partitioning strategy
``Clairvoyant''~\cite{AumullerD15}; see Appendix~\ref{sec:clairvoyant} for the result.


\subsection*{Part~\partref{sec:count-is-optimal}:
  \nameref{sec:count-is-optimal}}

In Aumüller and Dietzfelbinger~\cite{AumullerD15} it was shown that
the strategy ``Clairvoyant'', which has access to an oracle to predict
the number of small and large elements in the remaining list,
minimizes the number of key comparisons among all such strategies with
oracle; thus it is called optimal. The main result of
Part~\partref{sec:count-is-optimal} is that the algorithmic partitioning
strategy ``Count'' is an optimal strategy among all algorithmic
dual-pivot partitioning strategies. This means that the analysis from
Part~\partref{sec:quicksort} yields an exact and sharp lower bound for
the average number of key comparisons
of arbitrary dual-pivot quicksort algorithms (Theorem~\ref{thm:count-optimal}).


\section{Background: Random Lattice Paths in $\N_0^h$}
\label{sec:lattice-paths-delta}

In this paper, we use two types of lattice paths with very similar
properties. We collect the relevant definitions and results for both situations
in this section.

Let $h\ge 1$ be an integer and $n\ge 0$. In this section, we
consider lattice paths in $\N_0^h$. All lattice paths of this section start in the origin and
the admissible steps are  $e_1=(1, 0,\ldots, 0)^\top$, \ldots, $e_h=(0,
\ldots, 0, 1)^\top$.  For any lattice path $v$ with $n$ steps and $0\le t\le n$, we write
$v_{\le t}$ for the lattice path consisting of the first $t$ steps of $v$. By
construction $v_{\le t}$ ends in $\Omega_t\coloneqq \{ c\in \N_0^h \mid c_1+\dots+c_h = t\}$.

We consider a random lattice path $V$ with $n$ steps
under the following probability model:
\begin{itemize}
\item All endpoints in $\Omega_n$ are equally likely.
\item For $c\in\Omega_n$, all lattice paths ending in $c$ are equally likely.
\end{itemize}

\begin{lemma}\label{lemma:k-dimensional-lattice-paths}
  For $0 \le t\le n$,  $c\in\Omega_t$, and $1\le j\le h$ we have
  \begin{align*}
    \P{V_{\le t}\text{ ends in $c$}}&=\frac{1}{\abs{\Omega_t}}=\frac1{\binom{t+h-1}{h-1}},\\
    \P{V_{\le t+1}\text{ ends in $c+e_j$}\mid V_{\le t}\text{ ends in
    $c$}}&=\frac{c_j+1}{t+h} \quad\text{ for $t<n$},
  \end{align*}
  where, as above, $V_{\le t}$ denotes the initial segment of length $t$ of $V$.
\end{lemma}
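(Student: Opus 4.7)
The plan is to identify the probability model of this section with a classical P\'olya urn and then to read off both claims from the urn dynamics. Consider an urn that starts with one ball of each of $h$ colors; at every step one draws a ball uniformly at random, observes its color, and returns it to the urn together with a new ball of the same color. Let $V'$ be the length-$n$ path whose $s$-th step is $e_j$ whenever the $s$-th draw produces color $j$. The main intermediate step will be to show that $V'$ has the same distribution as $V$.

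To verify this equivalence, I would compute the probability of a fixed draw sequence producing a path $v=(e_{j_1},\dots,e_{j_n})$ with endpoint $c\in\Omega_n$ directly from the urn dynamics. After $s$ draws the urn holds $s+h$ balls, and if color $j$ has been drawn $k$ times so far then it contributes $k+1$ balls; consequently
\begin{equation*}
  \P{V'=v} = \prod_{s=0}^{n-1}\frac{c^{(s)}_{j_{s+1}}+1}{s+h},
\end{equation*}
where $c^{(s)}$ records the color counts after $s$ draws. The denominator equals $h(h+1)\cdots(n+h-1)=(n+h-1)!/(h-1)!$, and the $c_j$ successive draws of color $j$ contribute the factors $1,2,\dots,c_j$, so the overall numerator equals $\prod_j c_j!$. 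Hence $\P{V'=v}=(h-1)!\prod_j c_j!/(n+h-1)!$, which depends only on the endpoint. Summing over the $\binom{n}{c_1,\dots,c_h}$ paths ending at $c$ gives $\P{V'\text{ ends in }c}=1/\binom{n+h-1}{h-1}=1/\abs{\Omega_n}$. Thus the endpoint of $V'$ is uniform on $\Omega_n$ and, for every fixed endpoint, all paths reaching it are equiprobable. These are exactly the two defining properties of $V$, so $V'$ and $V$ have the same distribution.

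With this equivalence in hand both claims fall out. Running the same urn argument only for the first $t$ steps shows that the endpoint of $V_{\le t}$ is uniformly distributed on $\Omega_t$, which is the first formula. The second formula is just the one-step transition probability of the urn: after $t$ draws whose endpoint is $c$ the urn holds $c_j+1$ balls of color $j$ out of $t+h$ total, so the probability that the $(t+1)$-st draw is of color $j$ equals $(c_j+1)/(t+h)$.

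The main obstacle is the counting step that identifies $\P{V'=v}$ as a function of the endpoint alone; once that is settled both formulas follow immediately. A purely combinatorial alternative would compute $\P{V_{\le t}=v_1}$ for a fixed $v_1$ by summing over length-$(n-t)$ extensions and then applying the identity $\sum_{c'\in\Omega_n,\,c'\ge c}\prod_j\binom{c'_j}{c_j}=\binom{n+h-1}{t+h-1}$, read off from $[x^n]\prod_j x^{c_j}/(1-x)^{c_j+1}$. I prefer the urn viewpoint because it produces both the uniformity and the transition probabilities at the same time, with no further computation.
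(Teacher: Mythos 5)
Your proposal is correct and follows essentially the same route as the paper: both introduce the auxiliary P\'olya-urn/Markov process $V'$ with transition probabilities $(c_j+1)/(t+h)$, show that the probability of any fixed path depends only on its endpoint (hence endpoints are uniform and paths to a given endpoint equiprobable), and conclude that $V'$ and $V$ are identically distributed, from which both formulas follow. The only difference is cosmetic: you evaluate the path probability by telescoping the product directly, whereas the paper establishes the same formula by induction on $t$ and mentions the urn interpretation as a remark afterwards.
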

\begin{proof}
  Consider another random lattice path $V'$,
	which is defined as a Markov chain with transition probabilities
  \begin{equation}\label{eq:W-prime-definition}
     \P{V'_{\le t+1}\text{ ends in $c+e_j$}\mid V'_{\le t}\text{ ends in
    $c$}}=\frac{c_j+1}{t+h},
  \end{equation}
  for $0\le t < n$, $c\in \Omega_t$, and $1\le j \le h$.
	The Markov condition means that in fact
	$\P{V'_{\le t+1}\text{ ends in $c+e_j$}\mid V'_{\le t}=v'}=\frac{c_j+1}{t+h}$
	for each single lattice path $v'$ of length $t$ that ends in $c$.
  We investigate $V'$ with the aim of proving that $V$ and $V'_{\le n}$
  are identically distributed.

  Let $0\le t\le n$ and $v'$ be some lattice path with $n$ steps. Assume that
  $v'_{\le t}$ ends in $c\in\Omega_t$. We claim that
  \begin{equation}\label{eq:lattice-path-path-probability}
    \P{V'_{\le t}=v'_{\le t}}=\frac1{\binom{t+h-1}{h-1}}\frac{1}{\binom{t}{c_1,c_2,\ldots,c_h}}.
  \end{equation}
  We prove this by induction on $t$. The claim is certainly true for $t=0$ because
  $\Omega_0=\{0\}$. Now let $0<t<n$ and let $e_j$ be the last step of $v'_{t+1}$. Then we have
  \begin{align*}
    \P{V'_{\le t+1}=v_{\le t+1}'}&=\P{V'_{\le  t+1}\text{ ends in $c+e_j$}\mid 
                V'_{\le t} = v'_{\le t}}\P{V_{\le t}'=v_{\le t}'}\\
                          &=\frac{c_j+1}{t+h}\frac1{\binom{t+h-1}{h-1}}\frac{1}{\binom{t}{c_1,c_2,\ldots,c_h}}\\
                          &=    \frac{1}{\frac{t+h}{t+1}\binom{t+h-1}{t}\frac{t+1}{c_j+1}\binom{t}{c_1,c_2,\ldots,c_h}}\\
    &=\frac{1}{\binom{t+h}{t+1}}\frac{1}{\binom{t+1}{c_1, \ldots, c_j+1,
        \ldots, c_h}},
  \end{align*}
  which is \eqref{eq:lattice-path-path-probability} for $t+1$.

  Note that \eqref{eq:lattice-path-path-probability} implies that
  $\P{V'_{\le t}=v'_{\le t}}$ only depends on the endpoint $c$ of $v'_{\le t}$ and not on the
  initial segment $v'_{\le t}$ itself. Thus all  $v'_{\le t}$ ending in
  $c$  are equally likely. There are $\binom{t}{c_1,c_2,\ldots,c_h}$
  lattice paths of length $t$ ending in $c$,
  thus~\eqref{eq:lattice-path-path-probability} implies that
  \begin{equation}\label{eq:w-prime-end}
    \P{V_{t}'\text{ ends in $c$}} = \frac{1}{\binom{t+h-1}{h-1}}=\frac{1}{\abs{\Omega_t}}.
  \end{equation}
  This probability does not depend on $c\in\Omega_t$, thus all endpoints $c$
  after $t$ steps are equally likely.

  For $t=n$, the last two observations imply that $V'_{\le n}$ and $V$ are identically
  distributed. Thus the assertions of the lemma follow from
  \eqref{eq:w-prime-end} and \eqref{eq:W-prime-definition}.
\end{proof}

Note that the random lattice path model considered in this proof is another formulation of
a contagion P{\'o}lya urn model~\cite{Mahmoud:2008:Polya-urn-models}
(also called P{\'o}lya--Eggenberger urn model)
with balls of $h$ colors. Initially, there is one ball of
each color in the urn. In each step, a ball is drawn at random and is replaced by two
balls of the same color. The color of the ball determines the next step of $V'$.

\part{Lattice Paths}
\label{sec:lattice-paths}


As explained in the introduction, our analysis of the optimal partitioning procedure of dual pivot quicksort is based on a
certain lattice path model. The quantity of interest is the number of zeros of these lattice paths. This first
part is devoted to a thorough analysis of these lattice paths and its zeros.

The lattice paths have positive and negative
ordinate values and a fixed length $n$; they are introduced in Section~\ref{sec:description} by a
precise description of the probabilistic model. 
We emphasize that this probability model is different from the most commonly used
model, where all lattice paths are equally likely. We will work with this
model throughout Part~\partref{sec:lattice-paths}. In
Section~\ref{sec:description}, we give a precise description of our probability
model and define the
random variable~$Z_n$ counting the number
of zeros of the lattice paths.

In the following sections, we determine the value of $\E{Z_n}$ both exactly
(Sections~\ref{sec:gf} and~\ref{sec:prob}), as well
as asymptotically (Section~\ref{sec:asymptotics}). In Section~\ref{sec:gf}, we
use the machinery of generating functions. This machinery turns out to be
overkill if we are just interested in the expectation $\E{Z_n}$. However, it
easily allows extension to higher moments and the limiting distribution.

In Section~\ref{sec:prob}, we follow a
probabilistic approach, which first gives a result on
the probability model:
the equidistribution at the final values turns out to
carry over to every fixed length initial segment of the path. This result yields
a very simple expression for the expectation
$\E{Z_n}$ in terms of harmonic numbers, and
thus immediately yields a precise asymptotic expansion for $\E{Z_n}$.
This distributional result is a consequence of the results on more general lattice paths
considered in Section~\ref{sec:lattice-paths-delta}.
The generating function approach,
however, gives the expectation in terms of a double sum of quotients of
binomial coefficients (the right-hand side of \eqref{eq:id-intro}).

Section~\ref{sec:identity} gives a direct computational
proof that these two results coincide. The original expression in
\cite{AumullerD15} (a double sum over a quotient
of a product of binomial coefficients and a binomial coefficient) is also shown
to be equal to our identity; see Section~\ref{sec:identity}.
Both explicit and asymptotic expressions for the distribution
$\P{Z_n=r}$ can be found in Section~\ref{sec:distribution}.


\section{Probabilistic Model}
\label{sec:description}


We consider paths of a given length~$n$ on the lattice $\Z^2$, where only \emph{up-steps}
$(1,+1)$ and \emph{down-steps} $(1,-1)$ are allowed. These paths are chosen at random according to the rules
below.

Let us fix a length~$n\in\N_0$. A path\footnote{We denote the lattice paths by
  the symbol~$W_n$ (whose shape is close to a visualization
  of a lattice path); the more natural symbol~$P_n$ is used later for
  the partitioning cost. In contrast to the lattice paths in
  Section~\ref{sec:lattice-paths-delta}, we include the length~$n$
  in the notation~$W_n$
  because we consider various random variables depending on~$W_n$
  for $n\to\infty$.}~$W_n$ starting in the origin~$(0,0)$ (no
choice for this starting point) is chosen according to the following rules.
\begin{itemize}
\item First, we choose an ending point $(n,D)$, where $D$ is a random
  integer uniformly distributed in $\{-n, -n+2, \ldots, n-2, n\}$, i.\,e., $D=d$ occurs only for
  integers $d$ with $\abs{d}\le n$ and $d\equiv n \pmod 2$.
\item Second, a path is chosen uniformly at random among all paths from $(0,0)$
  to $(n,D)$.
\end{itemize}
The conditions on $D$ characterize those ending points that are reachable from~$(0,0)$.
It is easy to see that the lattice paths~$W_n$ and $V$ of
Section~\ref{sec:lattice-paths-delta} are closely related; see the
proof of Lemma~\ref{lem:prob-point-uniform} for details.

We are interested in the number of intersections of
a path with the horizontal axis.
To make this precise, we define a \emph{zero} of a path $W_n$ as a point
$(x,0)\in W_n$. 

Thus, let $W_n$ be a path of length~$n$ which is chosen according to the
probabilistic model above and define the random variable
\begin{equation*}
  Z_n = \text{number of zeros of $W_n$}.
\end{equation*}


\section{Using the Generating Function Machinery}
\label{sec:gf}


\begin{theorem}\label{thm:paths-gf-zeros}
  The expected number of zeros in a randomly
  (as described in Section~\ref{sec:description}) chosen path of
  length~$n$ is
  \begin{equation*}
    \E{Z_n} =
    \frac{4}{n+1}
    \sum_{0\leq k < \ell < \ceil{n/2}} \frac{\binom{n}{k}}{\binom{n}{\ell}}
    + \iverson*{$n$ even} \frac{1}{n+1}
    \left(\frac{2^n}{\binom{n}{n/2}} - 1\right) + 1.
  \end{equation*}
\end{theorem}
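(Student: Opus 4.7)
The plan is to expand
\[\E{Z_n} = \sum_{i=0}^{\floor{n/2}} \P{(2i,0) \in W_n}\]
by linearity (the $i=0$ term contributing the $+1$ in the theorem), to evaluate each $\P{(2i,0)\in W_n}$ by conditioning on the endpoint, and to simplify the resulting expression using the generating-function identity
\[\sum_{i\geq 0} \binom{2i}{i} x^i (1+x)^{n-2i} = \frac{(1+x)^{n+1}}{1-x},\]
which follows from substituting $z = x/(1+x)^2$ in the Catalan generating function $\sum_{i\geq 0} \binom{2i}{i} z^i = (1-4z)^{-1/2}$.

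For $i\geq 1$, the probability model (uniform endpoint, then uniform path) yields
\[\P{(2i,0)\in W_n} = \frac{\binom{2i}{i}}{n+1} \sum_{d} \frac{\binom{n-2i}{(n-2i+d)/2}}{\binom{n}{(n+d)/2}},\]
summed over the feasible endpoints~$d$. I would then set $j = (n+d)/2$ and exploit the symmetry $j\leftrightarrow n-j$, under which both binomial coefficients are invariant, to rewrite this as
\[\P{(2i,0)\in W_n} = \frac{\binom{2i}{i}}{n+1}\biggl(2 \sum_{j=i}^{\ceil{n/2}-1}\frac{\binom{n-2i}{j-i}}{\binom{n}{j}} + \iverson*{$n$ even}\,\frac{\binom{n-2i}{n/2-i}}{\binom{n}{n/2}}\biggr).\]

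Summing over $i\geq 1$ and swapping the order of summation produces the inner sum $\sum_{i=1}^{j}\binom{2i}{i}\binom{n-2i}{j-i}$. Extracting the coefficient of $x^j$ on both sides of the generating-function identity gives $\sum_{i=0}^{j}\binom{2i}{i}\binom{n-2i}{j-i} = \sum_{m=0}^{j}\binom{n+1}{m}$, and one application of Pascal's rule $\binom{n+1}{m}=\binom{n}{m}+\binom{n}{m-1}$ simplifies the right-hand side to $\binom{n}{j} + 2\sum_{m=0}^{j-1}\binom{n}{m}$. Subtracting the $i=0$ summand $\binom{n}{j}$ leaves $2\sum_{m<j}\binom{n}{m}$, and substituting back produces the announced double sum $\frac{4}{n+1}\sum_{0 \leq k<\ell<\ceil{n/2}}\binom{n}{k}/\binom{n}{\ell}$.

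For the central contribution (only present when $n$ is even), the same identity at $j=n/2$ gives $\sum_{i=1}^{n/2}\binom{2i}{i}\binom{n-2i}{n/2-i} = \sum_{m=0}^{n/2}\binom{n+1}{m}-\binom{n}{n/2}$, and the symmetry $\binom{n+1}{m}=\binom{n+1}{n+1-m}$ of the odd row~$n+1$ forces $\sum_{m=0}^{n/2}\binom{n+1}{m} = 2^n$, producing the middle correction $\frac{1}{n+1}\bigl(\frac{2^n}{\binom{n}{n/2}}-1\bigr)$. The hardest part will be the parity-dependent bookkeeping when splitting the endpoint sum by the symmetry $d \leftrightarrow -d$ so that the central term ($d=0$) cleanly separates from the paired off-center contributions; by contrast, the generating-function identity is obtained by a routine Catalan substitution.
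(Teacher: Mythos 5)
Your proposal is correct, but it follows a genuinely different route from the paper's proof. The paper establishes Theorem~\ref{thm:paths-gf-zeros} via the symbolic method: the path is decomposed into Dyck-path blocks with zeros marked by a variable $u$, giving the bivariate generating function $Q_d(z,u)$, which is then handled through the substitution $z=v/(1+v^2)$ and Cauchy coefficient extraction (Lemmas~\ref{lem:transform-gf-zeros}--\ref{lem:coeffs-zeros}), before averaging over the uniform endpoint. You instead write $\E{Z_n}=\sum_{i}\P{(2i,0)\in W_n}$ by linearity of expectation and count paths through a prescribed zero conditioned on the endpoint; your intermediate expression is in essence the double sum~\eqref{eq:id:quicksort} of Theorem~\ref{thm:identity}, which the paper attributes to~\cite{AumullerD15}. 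Your subsequent reduction — swapping the order of summation, evaluating $\sum_{i}\binom{2i}{i}\binom{n-2i}{j-i}=\sum_{m\le j}\binom{n+1}{m}$ via the Catalan substitution $\sum_i\binom{2i}{i}x^i(1+x)^{n-2i}=(1+x)^{n+1}/(1-x)$, then applying Pascal's rule and the symmetry of row $n+1$ for the central term — checks out, including the parity bookkeeping under $d\leftrightarrow-d$ and the boundary case $i=n/2$ for even $n$. What each approach buys: the paper's generating-function machinery is heavier for the expectation alone but extends directly to higher moments and to the exact and limiting distribution of $Z_n$ (Section~\ref{sec:distribution}), whereas your argument is more elementary and self-contained, and as a by-product it gives a direct proof that \eqref{eq:id:quicksort} equals \eqref{eq:id:double} — a link the paper obtains only indirectly, by proving both equal to \eqref{eq:id:single}.
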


The remaining part of this section is devoted to the proof of this theorem.
The main technique is to model the lattice paths by means of combinatorial
classes and generating functions, i.e., the symbolic method; see, for example, Flajolet and
Sedgewick~\cite{Flajolet-Sedgewick:ta:analy}.

\begin{figure}
  \centering
  \bgroup
  \renewcommand{\CZero}{}
    \begin{tikzpicture}[xscale=-0.25, yscale=0.25, latticepath/.style={very thick}]

    \draw (24,-5) -- (24,14);
    \draw (-24,0) -- (25,0);
    \draw[dotted] (-22,0) -- (-22,10);

    \newcommand{\Cpath}[3][]{
      \draw[latticepath, #1] ($(0,0) + #2$) -- ($(4,0) + #2$) --
      ($(2,2.82842712474619) + #2$) -- cycle;
      \node at ($(2,0) + #2$) [above] {#3};
    }
    \newcommand{\Cpathmirr}[3][]{
      \draw[latticepath, #1] ($(0,0) + #2$) -- ($(4,0) + #2$) --
      ($(2,-2.82842712474619) + #2$) -- cycle;
      \node at ($(2,0) + #2$) [below] {#3};      
    }

    \newcommand{\Cpathdown}[2]{
      \Cpath{#1}{#2}
      \draw[latticepath] ($(4,0) + #1$) -- ($(5,-1) + #1$);
    }

    \node at (-22,10) [right] {$d$};
    \Cpathdown{(-22,10)}{$\CDyckpath$}
    \Cpathdown{(-17,9)}{$\CDyckpath$}

    \draw[dotted] (-11,7) -- (-7,3);
    \draw[latticepath] (-6,2) -- (-5,1);
    \Cpathdown{(-5,1)}{$\CDyckpath$}

    \node[circle,inner sep=1.5pt,fill] at (0,0) {};
    \node[] at (0,0) [below] {$\CZero$};
    \draw[latticepath] (0,0) -- (1,1);
    \Cpath{(1,1)}{$\CDyckpath$}
    \draw[latticepath] (5,1) -- (6,0);
    \draw[latticepath] (0,0) -- (1,-1);    
    \Cpathmirr{(1,-1)}{$\CReflectedDyckpath$}
    \draw[latticepath] (5,-1) -- (6,0);

    \node[circle,inner sep=1.5pt,fill] at (6,0) {};
    \node[] at (6,0) [below] {$\CZero$};
    \draw[latticepath] (6,0) -- (7,1);
    \Cpath{(7,1)}{$\CDyckpath$}
    \draw[latticepath] (11,1) -- (12,0);
    \draw[latticepath] (6,0) -- (7,-1);    
    \Cpathmirr{(7,-1)}{$\CReflectedDyckpath$}
    \draw[latticepath] (11,-1) -- (12,0);

    \node[circle,inner sep=1.5pt,fill] at (12,0) {};
    \node[] at (12,0) [below] {$\CZero$};
    \draw[dotted] (12,2.414) -- (18,2.414);
    \draw[dotted] (12,-2.414) -- (18,-2.414);

    \node[circle,inner sep=1.5pt,fill] at (18,0) {};
    \node[] at (18,0) [below] {$\CZero$};
    \draw[latticepath] (18,0) -- (19,1);
    \Cpath{(19,1)}{$\CDyckpath$}
    \draw[latticepath] (23,1) -- (24,0);
    \draw[latticepath] (18,0) -- (19,-1);    
    \Cpathmirr{(19,-1)}{$\CReflectedDyckpath$}
    \draw[latticepath] (23,-1) -- (24,0);

    \node at (-22,0) [below] {$n$};

  \end{tikzpicture}

  \egroup
  \caption{Decomposition of a lattice path for $d\geq0$ marking zeros by the symbol~$\bullet$\,.}
  \label{fig:decomp-path-zeros}
\end{figure}

In Figure~\ref{fig:decomp-path-zeros}, we give a schematic decomposition of a
path from $(0, 0)$ to $(n, d)$ for non-negative $d$. We denote the classes of a
single ascent $\Upstep$ by $\CUpstep$ and a single descent $\Downstep$ by $\CDownstep$. The class of Dyck-paths (paths starting and ending at the same
height, but not going below it) is denoted by $\CDyckpath$.  A reflection of a
Dyck-path is denoted by $\CReflectedDyckpath$. A zero (except the
first at the origin) is represented by the singleton class $\CZero$. Note that we do not mark
the zero at $(0,0)$ for technical reasons; we will take this into account at the
end by adding a $1$ to the final result.

With these notations, this decomposition
can be described as follows (the path is read
from the left to the right).
\begin{itemize}
\item We start at $(0,0)$ by
  either doing a single ascent $\Upstep$, followed by a Dyck path of $\CDyckpath$
  and a single descent $\Downstep$ or doing a single descent $\Downstep$,
  followed by a reflected Dyck path of $\CReflectedDyckpath$
  and a single ascent $\Upstep$. Thus, we have reached a zero $(x,0)$.
\item We mark this zero by $\CZero$.
\item We repeat such up or down blocks $\CUpblock \coloneqq \CUpstep\Ctimes\CDyckpath\Ctimes\CDownstep$ or $\CDownblock \coloneqq \CDownstep\Ctimes\CReflectedDyckpath\Ctimes\CUpstep$, each one followed by a
  zero~$\CZero$, a finite number of times.
\item We end by $d$ consecutive blocks of $\CDyckpath$, each preceded by a single
  ascent $\CUpstep$. This gives the paths to their end point at height~$d$.
  Thus, there is a block $\CTerminalUpblock \coloneqq \CUpstep\Ctimes\CDyckpath$
  for the last step $\Upstep$ on each level.
\end{itemize}
Written as a symbolic expression, this decomposition amounts to
\begin{equation*}
  \f[Big]{\textsc{Seq}_{\geq0}}{
    \bigl(\CUpblock \,\cup\, \CDownblock\bigr)\Ctimes\CZero}
  \Ctimes
  \f[Big]{\textsc{Seq}_{=d}}{\CTerminalUpblock}.
\end{equation*}

Now turning to generating functions, we mark a step to the right (i.e., up- and down-steps) by
the variable~$z$ and a zero (except at the origin) by $u$.
Thus, the coefficient of $z^nu^{r-1}$ of
the function $Q_d(z,u)$ (the generating function of all paths starting in
$(0,0)$ and ending in $(n,d)$ for some~$n$) equals the number of paths of length~$n$
that have exactly $r$ zeros.

Thus the generating functions corresponding to the classes $\CUpstep$,
$\CDownstep$ and $\CZero$ are $z$, $z$ and $u$, respectively.
The generating
function~$\f{D}{z}$ corresponding to the class of Dyck-paths $\CDyckpath$ equals
\begin{equation*}
  \f{D}{z} = \frac{1-\sqrt{1-4z^2}}{2z^2}
\end{equation*}
(see~\cite[Example~I.16 and page 6]{Flajolet-Sedgewick:ta:analy}); we have to
replace $z$ by $z^2$ because the number of Dyck paths of length $n$ equals the
number of binary trees with $n/2$ inner nodes. It is clear that $\f{D}{z}$ also
corresponds to reflected Dyck-paths $\CReflectedDyckpath$.

The decomposition above
translates to the generating function
\begin{equation}
  \label{eq:path-gf-zeros}
  Q_d(z,u) = \frac{\f{D}{z}^{\abs{d}} z^{\abs{d}}}{1 - 2 u z^2 \f{D}{z}},
\end{equation}
which we will use from now on. If $d<0$, then the construction is the same, but everything is reflected at the
horizontal axis and \eqref{eq:path-gf-zeros} remains valid (as we already wrote
$\abs{d}$).

To obtain a nice explicit form, we perform a change of variables. The result is
stated in the following lemma.

\begin{lemma}\label{lem:transform-gf-zeros}
  With the transformation $z = v / (1+v^2)$, which is valid for
  $z$ (and $v$) in a suitable neighborhood of zero, we have
  \begin{equation*}
    Q_d(z,u) = \frac{v^{\abs{d}}(1+v^2)}{1-v^2(2u-1)}.
  \end{equation*}
\end{lemma}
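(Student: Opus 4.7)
The proof is essentially a direct substitution: I would plug $z = v/(1+v^2)$ into the expression~\eqref{eq:path-gf-zeros} and verify that it simplifies to the claimed closed form. The reason the substitution works so cleanly is that $1-4z^2$ factors as a perfect square under it.

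Concretely, the plan is to compute in the following order. First, observe
\begin{equation*}
  1-4z^2 = 1 - \frac{4v^2}{(1+v^2)^2} = \frac{(1+v^2)^2 - 4v^2}{(1+v^2)^2} = \frac{(1-v^2)^2}{(1+v^2)^2},
\end{equation*}
so that in a neighborhood of $v=0$ the appropriate analytic branch of $\sqrt{1-4z^2}$ equals $(1-v^2)/(1+v^2)$. A quick check at $v=0$ confirms that this branch is the one giving a power series in~$z$ starting at~$1$. Then
\begin{equation*}
  1 - \sqrt{1-4z^2} = \frac{2v^2}{1+v^2},
\end{equation*}
and dividing by $2z^2 = 2v^2/(1+v^2)^2$ yields
\begin{equation*}
  \f{D}{z} = 1+v^2.
\end{equation*}

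Next I would use the identity $z\f{D}{z} = v$, which follows immediately from the previous line: indeed $z\f{D}{z} = \frac{v}{1+v^2}(1+v^2) = v$. Hence $(z\f{D}{z})^{\abs{d}} = v^{\abs{d}}$, which takes care of the numerator of \eqref{eq:path-gf-zeros}. For the denominator, I compute
\begin{equation*}
  1 - 2uz^2\f{D}{z} = 1 - \frac{2uv^2}{1+v^2} = \frac{1-v^2(2u-1)}{1+v^2}.
\end{equation*}
Combining numerator and denominator gives exactly $v^{\abs{d}}(1+v^2)/(1-v^2(2u-1))$, as claimed.

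There is no real obstacle here; the only point deserving comment is the branch choice for the square root, which I would justify by noting that both $z$ and $v$ lie in a neighborhood of $0$ and that $\sqrt{1-4z^2}$ is taken to be the branch equal to $1$ at $z=0$. The transformation $z = v/(1+v^2)$ is also biholomorphic near the origin (with inverse power series $v = z + z^3 + 2z^5 + \cdots$), so the identity of rational functions in $v$ translates into an identity of analytic functions in $z$ on a common neighborhood of~$0$.
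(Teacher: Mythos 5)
Your proof is correct and follows essentially the same route as the paper: the paper's proof also rests on the observation that $\f{D}{z}=1+v^2$ under the substitution $z=v/(1+v^2)$, then substitutes into~\eqref{eq:path-gf-zeros} and simplifies. You merely spell out the details (the branch of $\sqrt{1-4z^2}$ and the algebraic simplification) that the paper leaves implicit.
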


\begin{proof}
  Transforming the counting generating function of Dyck paths yields
  \begin{equation*}
    \f{D}{z} = 1+v^2.
  \end{equation*}
  Thus~\eqref{eq:path-gf-zeros} becomes
  \begin{equation*}
    Q_d(z,u) = 
    (1+v^2)^{\abs{d}}
    \Big(\frac{v}{1+v^2}\Big)^{\abs{d}}
    \frac{1}{1-2u\big(\frac{v}{1+v^2}\big)^2(1+v^2)},
  \end{equation*}
  which can be simplified to the expression stated in the lemma.
\end{proof}

The next step is to extract the coefficients out of the expressions obtained in
the previous lemma. First we rewrite the extraction of the coefficients from
the ``$z$-world'' to the ``$v$-world''; see
Lemma~\ref{lem:extract-coeffs-worlds}. Afterwards, in
Lemma~\ref{lem:coeffs-zeros}, the coefficients can be determined quite easily.

\begin{lemma}\label{lem:extract-coeffs-worlds}
  Let $F(z)$ be an analytic function in a neighborhood of the origin. Then we have
  \begin{equation*}
    [z^n] F(z) = [v^n] (1-v^2) (1+v^2)^{n-1}
      \f{F}{\frac{v}{1+v^2}}.
  \end{equation*}
\end{lemma}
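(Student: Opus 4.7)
The plan is to apply a change of variables in Cauchy's coefficient-extraction integral; this is essentially the integral form of Lagrange inversion specialized to the substitution $z = v/(1+v^2)$.

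First, I would start from the Cauchy formula
\begin{equation*}
  [z^n] F(z) = \frac{1}{2\pi i} \oint \frac{F(z)}{z^{n+1}}\,\dd z,
\end{equation*}
where the integration contour is a sufficiently small positively oriented circle around the origin on which $F$ is analytic (such a contour exists by the hypothesis that $F$ is analytic at $0$).

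Next, I would perform the substitution $z = g(v) \coloneqq v/(1+v^2)$. Since $g(0) = 0$ and $g'(0) = 1 \neq 0$, the inverse function theorem yields a local biholomorphism near $0$; hence a sufficiently small positively oriented circle around $v = 0$ is mapped bijectively to an admissible contour around $z = 0$, preserving orientation. A direct computation gives
\begin{equation*}
  g'(v) = \frac{1-v^2}{(1+v^2)^2},\qquad \frac{1}{g(v)^{n+1}} = \frac{(1+v^2)^{n+1}}{v^{n+1}},
\end{equation*}
so that
\begin{equation*}
  \frac{\dd z}{g(v)^{n+1}} = \frac{(1-v^2)(1+v^2)^{n-1}}{v^{n+1}}\,\dd v.
\end{equation*}
Substituting into the Cauchy integral and reading the result as a coefficient extraction in $v$ yields
\begin{equation*}
  [z^n] F(z) = \frac{1}{2\pi i} \oint \frac{(1-v^2)(1+v^2)^{n-1}\, F\!\left(\frac{v}{1+v^2}\right)}{v^{n+1}}\,\dd v
  = [v^n]\,(1-v^2)(1+v^2)^{n-1}\, F\!\left(\frac{v}{1+v^2}\right),
\end{equation*}
which is the claim.

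I do not expect any serious obstacles here: the only subtle point is to justify that the contour transformation is legitimate, which is handled cleanly by the inverse function theorem because $g'(0) \neq 0$. All remaining work is the elementary algebraic simplification of the Jacobian factor, which collapses nicely to $(1-v^2)(1+v^2)^{n-1}$. An alternative presentation would invoke the Lagrange--Bürmann formula directly with $\phi(v) = 1+v^2$ (so that $v = z\phi(v)$), but the Cauchy-integral derivation above is self-contained and shorter.
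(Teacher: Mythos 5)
Your proof is correct and follows essentially the same route as the paper: apply Cauchy's coefficient formula, substitute $z = v/(1+v^2)$, and simplify the Jacobian factor to $(1-v^2)(1+v^2)^{n-1}$, with the contour argument justified (slightly more explicitly than in the paper) via $g'(0)\neq 0$. No gaps.
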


\begin{proof}
  We use Cauchy's formula to extract the coefficients of $F(z)$ as
  \begin{equation*}
    [z^n] F(z) = \frac{1}{2\pi i}\oint_\calC F(z) \frac{\dd z}{z^{n+1}}
  \end{equation*}
  where $\calC$ is a positively oriented small circle around the origin. Under
  the transformation $z=v/(1+v^2)$, the circle $\calC$ is transformed to a
  contour $\calC'$ which still winds exactly once around the origin. Using
  Cauchy's formula again, we obtain
  \begin{align*}
    [z^n] F(z)
    &= \frac{1}{2\pi i}\oint_{\calC'} \f{F}{\frac{v}{1+v^2}}
    \frac{(1+v^2)^{n+1}}{v^{n+1}} \frac{1-v^2}{(1+v^2)^2} \,\dd v \\
    &= [v^n] (1-v^2) (1+v^2)^{n-1} \f{F}{\frac{v}{1+v^2}},
  \end{align*}
  which was to be shown.
\end{proof}

Now we are ready to calculate the desired coefficients.

\begin{lemma}\label{lem:coeffs-zeros}
  Suppose $n\equiv d\pmod 2$. Then we have
  \begin{equation*}
    [z^n] Q_d(z,1)
    = \binom{n}{(n-d)/2}
  \end{equation*}
  and, moreover,
  \begin{equation*}
    [z^n] \left.\frac{\partial}{\partial u} Q_d(z,u)\right\vert_{u=1}
    = 2 \sum_{k=0}^{(n-\abs{d})/2-1}\binom{n}{k}.
  \end{equation*}
\end{lemma}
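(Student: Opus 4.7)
The plan is to feed the closed form for $Q_d(z,u)$ supplied by Lemma~\ref{lem:transform-gf-zeros} directly into the coefficient-extraction formula of Lemma~\ref{lem:extract-coeffs-worlds}. In both parts, the prefactor $(1-v^2)(1+v^2)^{n-1}$ from that lemma will cancel nicely against the $(1+v^2)$ in the numerator and the $(1-v^2(2u-1))^k$ in the denominator that arises from setting $u=1$ or differentiating. After the cancellation, what remains is a coefficient of a product of $(1+v^2)^n$ with either no extra factor or $1/(1-v^2)$, which is elementary.

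First I would treat the identity for $[z^n]Q_d(z,1)$. Setting $u=1$ in Lemma~\ref{lem:transform-gf-zeros} gives $Q_d(z,1) = v^{\abs{d}}(1+v^2)/(1-v^2)$. Substituting into Lemma~\ref{lem:extract-coeffs-worlds}, the factor $(1-v^2)$ in the prefactor cancels the denominator, leaving $[v^n]\, v^{\abs{d}}(1+v^2)^n = [v^{n-\abs{d}}](1+v^2)^n$. Since $(1+v^2)^n$ contains only even powers of $v$, this coefficient vanishes unless $n\equiv \abs{d}\equiv d\pmod 2$, and in that case it equals $\binom{n}{(n-\abs{d})/2} = \binom{n}{(n-d)/2}$ (using the symmetry $\binom{n}{k}=\binom{n}{n-k}$ to absorb the sign of $d$).

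For the second identity I would differentiate the expression in Lemma~\ref{lem:transform-gf-zeros} with respect to $u$, obtaining
\begin{equation*}
\left.\frac{\partial}{\partial u} Q_d(z,u)\right|_{u=1}
 = \frac{2v^{\abs{d}+2}(1+v^2)}{(1-v^2)^2}.
\end{equation*}
Applying Lemma~\ref{lem:extract-coeffs-worlds}, the prefactor $(1-v^2)$ cancels one power of $1-v^2$ in the denominator, and after shifting by $v^{\abs{d}+2}$ the task reduces to computing $2\,[v^{n-\abs{d}-2}]\,(1+v^2)^n/(1-v^2)$. Expanding $1/(1-v^2) = \sum_{j\ge0}v^{2j}$ and using $(1+v^2)^n = \sum_k\binom{n}{k}v^{2k}$, this coefficient is $\sum_{k=0}^{(n-\abs{d})/2-1}\binom{n}{k}$ whenever $n\equiv d\pmod 2$, giving the claim after multiplication by $2$.

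No step looks like a real obstacle; the proof is essentially a bookkeeping of the substitution $z = v/(1+v^2)$. The only mild subtlety is justifying the use of Lemma~\ref{lem:extract-coeffs-worlds}, which requires that the functions in question be analytic near the origin. This is immediate here because for $v$ close to $0$ the denominator $1-v^2(2u-1)$ does not vanish (treating $u$ as a formal parameter, the expansion in $v$ is well-defined), so the lemma applies and the computation goes through.
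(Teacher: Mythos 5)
Your proposal is correct and uses essentially the same machinery as the paper: differentiating the transformed generating function of Lemma~\ref{lem:transform-gf-zeros}, extracting coefficients via Lemma~\ref{lem:extract-coeffs-worlds}, and expanding $(1+v^2)^n/(1-v^2)$, exactly as in the paper's proof of the second identity. The only (harmless) difference is in the first identity, where the paper simply counts lattice paths from $(0,0)$ to $(n,d)$ directly, while you obtain $\binom{n}{(n-d)/2}$ by the same coefficient-extraction route; both arguments are valid.
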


\begin{proof}
  As $n \equiv d \pmod 2$, the number $n-d$ is even, and so we can
  set $\ell = \frac12(n-d)$. Then $[z^n]Q_d(z, 1)$ is the number of paths from
  $(0, 0)$ to $(n, d)$. These paths have $n-\ell$ up-steps and $\ell$ down-steps;
  thus there are $\binom{n}{\ell}$ many such paths.

  For the second part of the lemma, we restrict ourselves to $d\geq0$ (otherwise use $-d$ and the symmetry in~$d$
  of the generating function~\eqref{eq:path-gf-zeros} instead). We start with the result of
  Lemma~\ref{lem:transform-gf-zeros}. Taking the first derivative and setting
  $u=1$ yields
  \begin{equation*}
    \left.\frac{\partial}{\partial u} Q_d(z,u)\right\vert_{u=1}
    = \frac{2v^{d+2}(1+v^2)}{(1-v^2)^2}.
  \end{equation*}
  Thus, by using Lemma~\ref{lem:extract-coeffs-worlds}, we get
  \begin{equation*}
    [z^n] \frac{2v^{d+2}(1+v^2)}{(1-v^2)^2}
    = 2\,[v^{n-d-2}]\frac{(1+v^2)^{n}}{1-v^2}.
  \end{equation*}
  We use $\ell$ as above and get
  \begin{equation*}
    [v^{n-d-2}]\frac{(1+v^2)^n}{1-v^2}
    = [v^{2\ell-2}]\frac{(1+v^2)^n}{1-v^2}
    = [v^{\ell-1}] \frac{(1+v)^n}{1-v}
    = \sum_{k=0}^{\ell-1}\binom{n}{k}
  \end{equation*}
  as desired.
\end{proof}

We are now ready to prove the main theorem (Theorem~\ref{thm:paths-gf-zeros})
of this section, which provides an expression for the expected number of
zeros in our random lattice paths.
This exact expression is written as a double sum.
\begin{proof}[Proof of Theorem~\ref{thm:paths-gf-zeros}]
  By Lemma~\ref{lem:coeffs-zeros}, the average number of zeros
  (except the zero at the origin) of
  a path of length~$n$ which ends in $(n,d)$ is
  \begin{equation*}
    \mu_{n,d} = \frac{[z^n] \left.\frac{\partial}{\partial u}
        Q_d(z,u)\right\vert_{u=1}}{[z^n] Q_d(z,1)}
    = \frac{2}{\binom{n}{\ell}} \sum_{k=0}^{\ell-1}\binom{n}{k},
  \end{equation*}
  where we have set $\ell = \frac12 (n-\abs{d})$ as in the proof of
  Lemma~\ref{lem:coeffs-zeros}. If $d=0$, this simplifies to
  \begin{equation}\label{eq:mu_n_0}
    \mu_{n,0} = \frac{2}{\binom{n}{n/2}} \sum_{k=0}^{n/2-1}\binom{n}{k}
    = \frac{2^n}{\binom{n}{n/2}} - 1.
  \end{equation}
  If $n\not\equiv d\pmod 2$, then we set $\mu_{n,d} = 0$.

  Summing up yields
  \begin{align*}
    \sum_{d=-n}^n \mu_{n,d}
    &= 2 \sum_{d=1}^n \mu_{n,d} + \mu_{n,0}
    = 4 \sum_{\ell=0}^{\ceil{n/2}-1}
    \frac{1}{\binom{n}{\ell}} \sum_{k=0}^{\ell-1}\binom{n}{k}
    + \mu_{n,0} \\
    &= 4 \sum_{0\leq k < \ell < \ceil{n/2}} \frac{\binom{n}{k}}{\binom{n}{\ell}}
    + \iverson*{$n$ even} 
    \left(\frac{2^n}{\binom{n}{n/2}} - 1\right).
  \end{align*}
  Dividing by the number $n+1$ of possible end points and adding
  $1$ for the zero at the origin completes the proof of
  Theorem~\ref{thm:paths-gf-zeros}.
\end{proof}


\section{A Probabilistic Approach}
\label{sec:prob}


\begin{theorem}\label{thm:paths-prob}
  The expected number of zeros in a randomly
  (as described in Section~\ref{sec:description}) chosen path of
  length~$n$ is
  \begin{equation*}
    \E{Z_n} = \Hodd_{n+1}.
  \end{equation*}
\end{theorem}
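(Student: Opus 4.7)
The plan is to reduce the problem to the general setup of Section~\ref{sec:lattice-paths-delta} with $h=2$ and then apply Lemma~\ref{lemma:k-dimensional-lattice-paths} combined with linearity of expectation. The key idea is that the lattice paths $W_n$ of Section~\ref{sec:description}, encoded by their numbers of up- and down-steps, are in bijection with the random lattice paths $V$ in $\N_0^2$: a path with $a$ up-steps and $b$ down-steps corresponds to an endpoint $(a,b)\in\Omega_{a+b}$, and the height at time $t=a+b$ is simply $a-b$. Under this bijection, choosing an endpoint $(n,D)$ uniformly in $\{-n,-n+2,\dots,n\}$ and then a path uniformly at random is exactly the probability model for $V$ with $h=2$.

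Next I would apply Lemma~\ref{lemma:k-dimensional-lattice-paths}: for every $0\le t\le n$ and every $c\in\Omega_t$,
\begin{equation*}
  \P{V_{\le t}\text{ ends in }c}=\frac{1}{\binom{t+1}{1}}=\frac{1}{t+1}.
\end{equation*}
In particular, the crucial conceptual point is that the equidistribution on endpoints that was built into the probability model at time $n$ actually propagates to every prefix $V_{\le t}$ for $t\le n$. Translating back to $W_n$, this means that the height of $W_n$ after $t$ steps is uniformly distributed on the $t+1$ feasible heights $\{-t,-t+2,\dots,t-2,t\}$.

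With this in hand the expectation is immediate. A zero of $W_n$ at horizontal coordinate $t$ is possible only when $t$ is even (then the unique zero height $0$ corresponds to $c=(t/2,t/2)\in\Omega_t$), and in that case the probability is $1/(t+1)$. Counting the zero at the origin (which occurs with probability $1$, corresponding to $t=0$) and using linearity of expectation,
\begin{equation*}
  \E{Z_n}=\sum_{t=0}^{n}\P{W_n\text{ has a zero at abscissa }t}
  =\sum_{\substack{0\le t\le n\\ t\text{ even}}}\frac{1}{t+1}.
\end{equation*}
Substituting $m=t+1$ (so that $m$ is odd and ranges from $1$ to $n+1$) yields
\begin{equation*}
  \E{Z_n}=\sum_{m=1}^{n+1}\frac{\iverson*{$m$ odd}}{m}=\Hodd_{n+1},
\end{equation*}
which is the desired identity. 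There is essentially no hard step here; the only thing that requires care is verifying that the bijection between $W_n$-paths and $V$-paths respects both pieces of the probability model (endpoint distribution and uniformity of paths to a given endpoint), after which the result is a one-line consequence of Lemma~\ref{lemma:k-dimensional-lattice-paths}.
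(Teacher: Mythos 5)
Your argument is correct and is essentially the paper's own proof: the paper likewise identifies $W_n$ with the $h=2$ lattice path $V$ of Section~\ref{sec:lattice-paths-delta} (Lemma~\ref{lem:prob-point-uniform}), deduces $\P{(t,0)\in W_n}=\frac{1}{t+1}$ for even $t$ from Lemma~\ref{lemma:k-dimensional-lattice-paths}, and sums over $t$ to obtain $\Hodd_{n+1}$. No substantive difference in approach or gaps to report.
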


In the analysis of
the quicksort algorithm in Part~\partref{sec:quicksort}, we need an
\emph{up-from-zero situation}, which is a point $(t, 0) \in W_n$ such
that $(t+1, 1) \in W_n$. Define the random variable
 \begin{equation*}
  Z^{\nearrow}_n = \text{number of up-from-zero situations of $W_n$}.
\end{equation*}
The following corollary provides the expected value of this random variable.

\begin{corollary}\label{cor:exp-up-from-zero}
  The expected number of up-from-zero situations in a randomly
  (as described in Section~\ref{sec:description}) chosen path of
  length~$n$ is
  \begin{equation*}
    \E{Z^{\nearrow}_n} = \tfrac12 \Hodd_n.
  \end{equation*}
\end{corollary}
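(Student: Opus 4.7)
The plan is to obtain the corollary from Theorem~\ref{thm:paths-prob} by a short reflection argument, together with one telescoping identity for $\Hodd_n$.

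First, I would introduce the companion random variable $Z_n^{\searrow}$ counting \emph{down-from-zero} situations, i.e., points $(t,0)\in W_n$ with $(t+1,-1)\in W_n$. Every zero of $W_n$ except possibly the terminal one (occurring only when $W_n$ ends at $(n,0)$) is followed by exactly one step, which is either an up- or a down-step. Hence
\begin{equation*}
  Z_n^{\nearrow} + Z_n^{\searrow} = Z_n - \iverson{W_n \text{ ends at } (n,0)}.
\end{equation*}

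Next, I would invoke the vertical reflection symmetry of our probability model. The map sending a path $W_n$ to its reflection across the horizontal axis is a bijection on the sample space: the endpoint distribution is uniform on $\{-n,-n+2,\dots,n\}$ and thus invariant under $d\mapsto -d$, and the number of paths from $(0,0)$ to $(n,d)$ equals the number from $(0,0)$ to $(n,-d)$, so conditional uniformity is preserved. Under this reflection, up-from-zero situations become down-from-zero situations and vice versa, giving $\E{Z_n^{\nearrow}} = \E{Z_n^{\searrow}}$. Combined with the previous display and Theorem~\ref{thm:paths-prob}, this yields
\begin{equation*}
  \E{Z_n^{\nearrow}}
  = \tfrac12\bigl(\E{Z_n} - \P{W_n\text{ ends at }(n,0)}\bigr)
  = \tfrac12\Bigl(\Hodd_{n+1} - \tfrac{\iverson*{$n$ even}}{n+1}\Bigr),
\end{equation*}
where the probability of ending at $(n,0)$ equals $\iverson*{$n$ even}/(n+1)$ since the endpoint is uniform on $n+1$ points and $0$ is feasible iff $n$ is even.

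Finally, the definition of $\Hodd$ gives the telescoping relation $\Hodd_{n+1} - \Hodd_{n} = \iverson*{$n+1$ odd}/(n+1) = \iverson*{$n$ even}/(n+1)$, which simplifies the right-hand side to $\tfrac12\Hodd_n$. There is no real obstacle here; the only point to be careful about is keeping track of the parity case (so that the terminal-zero correction exactly cancels the last term of $\Hodd_{n+1}$), and verifying that the reflection is measure-preserving under our non-standard probability model rather than the uniform-over-all-paths model.
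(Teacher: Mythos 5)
Your proof is correct. It reaches the same intermediate formula as the paper, namely $\E{Z^{\nearrow}_n} = \tfrac12\bigl(\E{Z_n} - \iverson*{$n$ even}/(n+1)\bigr)$, and finishes with the same bookkeeping $\Hodd_{n+1}-\Hodd_n = \iverson*{$n$ even}/(n+1)$, but you justify the factor $\tfrac12$ by a different argument. The paper argues locally: a non-terminal zero of $W_n$ corresponds to a diagonal point of the path $V$ of Section~\ref{sec:lattice-paths-delta}, so by Lemma~\ref{lemma:k-dimensional-lattice-paths} the conditional probability of an up-step immediately after such a zero is exactly $\tfrac12$, and linearity of expectation over the non-terminal zeros gives the halving. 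You argue globally: the pathwise identity $Z^{\nearrow}_n + Z^{\searrow}_n = Z_n - \iverson{W_n\text{ ends at }(n,0)}$ combined with the reflection across the horizontal axis, which you correctly verify to be measure-preserving in this non-uniform model (the endpoint law is symmetric and $\binom{n}{\ell}=\binom{n}{n-\ell}$ preserves the conditional uniformity) and which swaps up-from-zero and down-from-zero situations, gives $\E{Z^{\nearrow}_n}=\E{Z^{\searrow}_n}$. Both routes are sound; yours needs only Theorem~\ref{thm:paths-prob} and the evident symmetry of the model, so it is somewhat more self-contained at this point, and it in fact yields the stronger statement that $Z^{\nearrow}_n$ and $Z^{\searrow}_n$ agree in distribution, whereas the paper's conditional-probability argument reuses the urn-type transition probabilities of Lemma~\ref{lemma:k-dimensional-lattice-paths}, machinery that is needed anyway (for instance in Part~\partref{sec:count-is-optimal}) and that also covers situations without the up/down symmetry.
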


In order to prove the theorem and the corollary, we need the following
property of our paths.

\begin{lemma}\label{lem:prob-point-uniform}
  Let $t\in\N_0$ with $t\leq n$. The probability that a random path $W_n$ (as
  defined in Section~\ref{sec:description}) runs through $(t,k)$ is
  \begin{equation}\label{eq:probability}
    \P{(t, k)\in W_n} = \frac{1}{t+1}
  \end{equation}
  for all $k$ with $\abs{k}\leq t$ and
  $k\equiv t \pmod 2$, otherwise it is $0$.
\end{lemma}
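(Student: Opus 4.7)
The plan is to deduce Lemma~\ref{lem:prob-point-uniform} as a direct corollary of Lemma~\ref{lemma:k-dimensional-lattice-paths} in the case $h=2$, via a standard bijection between $\pm 1$-walks and monotone lattice paths in $\N_0^2$.

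First I would set up the bijection $\phi$ sending a path $W_n$ with steps in $\{(1,+1),(1,-1)\}$ to a lattice path $V$ in $\N_0^2$ with steps in $\{e_1,e_2\}$: replace each up-step by $e_2$ and each down-step by $e_1$ (preserving order). Under this bijection, a $W_n$-path ending at $(n,D)$ corresponds to a $V$-path ending at $c = \bigl((n-D)/2,\, (n+D)/2\bigr) \in \Omega_n$. Since $\abs{\Omega_n} = n+1$ matches the number of feasible endpoints for $W_n$, and since $\phi$ is a bijection on paths with a given endpoint, the ``uniform endpoint, then uniform path'' sampling of Section~\ref{sec:description} is transported exactly to the probability model used in Section~\ref{sec:lattice-paths-delta}. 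Hence $V \stackrel{d}{=} \phi(W_n)$.

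Next I would translate the event $(t,k) \in W_n$: the path $W_n$ passes through $(t,k)$ if and only if its initial segment of length $t$ ends at height $k$, which under $\phi$ is the event that $V_{\le t}$ ends at $c_{t,k} \coloneqq \bigl((t-k)/2,\,(t+k)/2\bigr)$. This requires $c_{t,k} \in \N_0^2$ with $c_{t,k,1} + c_{t,k,2} = t$, i.e., $\abs{k} \le t$ and $k \equiv t \pmod 2$; otherwise the probability is $0$. When the condition is met, Lemma~\ref{lemma:k-dimensional-lattice-paths} with $h=2$ yields
\begin{equation*}
  \P{V_{\le t} \text{ ends in } c_{t,k}} = \frac{1}{\binom{t+1}{1}} = \frac{1}{t+1},
\end{equation*}
which gives \eqref{eq:probability}.

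There is essentially no obstacle: the work is packaged entirely into Lemma~\ref{lemma:k-dimensional-lattice-paths}, and the only point that needs a few words is to verify that the two sampling procedures (uniform endpoint + uniform path versus the Markov/P\'olya-urn description) coincide under $\phi$; this follows from \eqref{eq:w-prime-end} applied at $t = n$, combined with the fact that $\phi$ preserves ``ends at given point'' and respects uniform conditional distributions on paths with a fixed endpoint.
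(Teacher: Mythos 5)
Your proposal is correct and is essentially the paper's own proof: the paper likewise identifies $W_n$ with the $h=2$ lattice path $V$ of Section~\ref{sec:lattice-paths-delta} (up/down steps mapped to $e_1,e_2$; your swapped identification is immaterial), notes that $(t,k)\in W_n$ corresponds to $V_{\le t}$ ending at a point of $\Omega_t$, and applies Lemma~\ref{lemma:k-dimensional-lattice-paths} to get $1/\binom{t+1}{1}=1/(t+1)$. Your extra verification that the two sampling models coincide under the bijection is fine, though it needs only the observation that the model of Section~\ref{sec:lattice-paths-delta} is by definition ``uniform endpoint, then uniform path'', which the bijection transports directly, without invoking the Markov-chain construction.
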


Our lattice path model of this part is equivalent to the
lattice paths model of Section~\ref{sec:lattice-paths-delta} with
dimension $h=2$.

\begin{proof}[Proof of Lemma~\ref{lem:prob-point-uniform}]
  We use the notation of Section~\ref{sec:lattice-paths-delta}. Let
  $h=2$. We identify a lattice path~$W_n$ with a lattice path~$V$ of
  Section~\ref{sec:lattice-paths-delta} in the following way: We
  identify up-steps with $e_1$ and down-steps with $e_2$. (This
  corresponds to a rotation of a lattice path~$W_n$ by $45$ degrees
  counterclockwise and a scaling by $1/\sqrt{2}$; see
  Figure~\ref{fig:lattice-paths-Wn-V}.)

  A point $(t, k)\in W_n$ then corresponds to a point in $\Omega_t$,
  and the uniform distribution follows by Lemma~\ref{lemma:k-dimensional-lattice-paths}.
\end{proof}

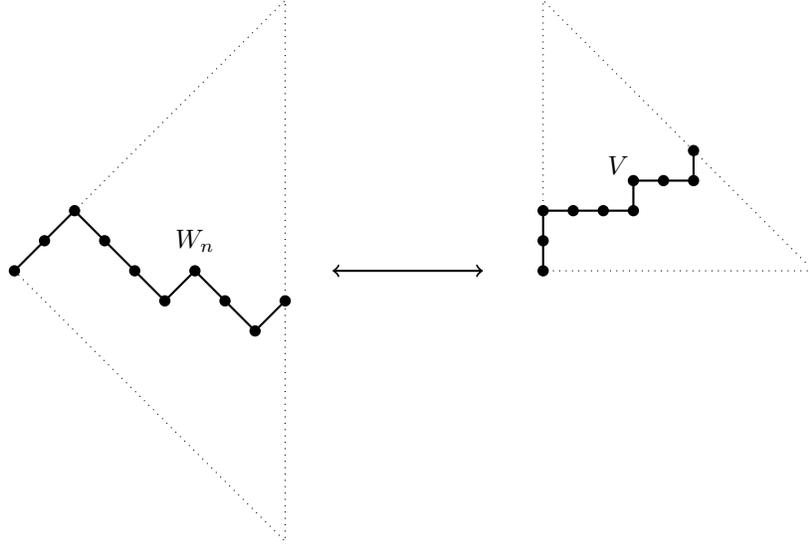
\begin{figure}
  \centering
  \begin{tikzpicture}[scale=0.4]
    \def\Wnpath{
      \draw [dotted] (0,0) -- (9,9) -- (9,-9) -- cycle;
      \draw [thick] (0,0) -- (1,1) -- (2,2) -- (3,1)
      -- (4,0) -- (5,-1)
      -- (6,0) -- (7,-1) -- (8,-2) -- (9,-1)
      ;
      \foreach \pt in {(0,0), (1,1), (2,2), (3,1),
        (4,0), (5,-1), (6,0), (7,-1), (8,-2), (9,-1)}
      \node [circle, fill, inner sep=0, minimum size=0.15cm] at \pt {};
    }

    \begin{scope}[xshift=-500]
      \Wnpath
      \node at (6,1) {$W_n$};
    \end{scope}
    \begin{scope}[rotate=45, scale=1/sqrt(2)]
      \Wnpath
      \node at (6,1) {$V$};
    \end{scope}

    \draw [<->, thick] (-7,0) -- (-2,0);
  \end{tikzpicture}
  \caption{Correspondence between lattice paths $W_n$ and $V$.}
  \label{fig:lattice-paths-Wn-V}
\end{figure}

\begin{proof}[Proof of Theorem~\ref{thm:paths-prob}]
  By Lemma~\ref{lem:prob-point-uniform}, the expected number of zeros of
  the path~$W_n$ is
  \begin{equation*}
    \E{Z_n} =
    \sum_{t=0}^{n} \P{(t,0) \in W_n}
    = \sum_{t=0}^{n} \iverson*{$t$ even} \frac{1}{t+1}
    = \sum_{t=1}^{n+1} \frac{\iverson*{$t$ odd}}{t}
    = \Hodd_{n+1},
  \end{equation*}
  which completes the proof of the theorem.
\end{proof}

\begin{proof}[Proof of Corollary~\ref{cor:exp-up-from-zero}]
  An up-from-zero situation can only occur at a zero of $W_n$. We have
  to exclude $(n,0)$. By Lemma~\ref{lemma:k-dimensional-lattice-paths} going up or down
  after a zero is equally likely because a zero of $W_n$ corresponds to a
  diagonal element in a lattice path in the setting of
  Section~\ref{sec:lattice-paths-delta}. Thus
  \begin{equation*}
    \E{Z^\nearrow_n} =
    \frac12\left(\E{Z_n} - \frac{\iverson*{$n$ even}}{n+1}\right)
    = \frac12 \Hodd_n,
  \end{equation*}
  which we wanted to show.
\end{proof}


\section{Identity}
\label{sec:identity}


Using the previous two sections we can show the following identity in a
combinatorial way.

\begin{theorem}\label{thm:identity}
  For $n\ge 0$, the following four expressions are equal:
  \begin{subequations}
  \begin{align}
    &\mathrel{\phantom{=}}
    \frac{4}{n+1}
    \sum_{0\leq k < \ell < \ceil{n/2}} \frac{\binom{n}{k}}{\binom{n}{\ell}}
    + \iverson*{$n$ even} \frac{1}{n+1}
    \left(\frac{2^n}{\binom{n}{n/2}} - 1\right) + 1
    \label{eq:id:double}\\
    &= \frac{2}{\floor{n/2}+1}
      \sum_{0\leq k < \ell \leq \floor{n/2}}
      \frac{\binom{2\floor{n/2}+1}{k}}{\binom{2\floor{n/2}+1}{\ell}} + 1
      \label{eq:id:double-simple}\\
    &= \frac{1}{n+1} \sum_{m=0}^{\floor{n/2}} \sum_{\ell=m}^{n-m}
    \frac{\binom{2m}{m} \binom{n-2m}{\ell-m}}{\binom{n}{\ell}}
    \label{eq:id:quicksort}\\
    &= \Hodd_{n+1}.
    \label{eq:id:single}
  \end{align}
  \end{subequations}
\end{theorem}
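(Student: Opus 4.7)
The equality \eqref{eq:id:double}${}={}$\eqref{eq:id:single} is in fact already in our hands: it is the content of Theorems~\ref{thm:paths-gf-zeros} and~\ref{thm:paths-prob}, which evaluate $\E{Z_n}$ in two different ways. My plan is therefore to show that the two remaining expressions \eqref{eq:id:double-simple} and \eqref{eq:id:quicksort} also compute $\E{Z_n}$; the identity then follows by transitivity.

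For \eqref{eq:id:double-simple} I would use a parity reduction. The sum \eqref{eq:id:double-simple} depends on $n$ only through $M:=\floor{n/2}$, and the same is true of $\Hodd_{n+1}$ because $\Hodd_{2M+2}=\Hodd_{2M+1}$. It therefore suffices to verify \eqref{eq:id:double}${}={}$\eqref{eq:id:double-simple} for odd $n=2M+1$. In that case the Iverson-bracket term in \eqref{eq:id:double} vanishes, $\ceil{n/2}=M+1$, and $n+1=2(M+1)$, so \eqref{eq:id:double} simplifies term by term to \eqref{eq:id:double-simple}. The even case $n=2M$ then follows because both expressions take the same value as at $n=2M+1$.

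For \eqref{eq:id:quicksort} I would give a combinatorial interpretation of the summands. The product $\binom{2m}{m}\binom{n-2m}{\ell-m}$ counts exactly those lattice paths of length $n$ ending at $(n,n-2\ell)$ that visit the zero $(2m,0)$: the first factor enumerates the $2m$ initial steps returning to the axis, and the second enumerates the remaining $n-2m$ steps (with $\ell-m$ down-steps and $n-\ell-m$ up-steps). Summing over $m\in\{0,\dots,\floor{n/2}\}$ therefore tallies (path, zero-on-path)-incidences among paths ending at $(n,n-2\ell)$, so that dividing by the total count $\binom{n}{\ell}$ of such paths yields the conditional expectation of $Z_n$ given this endpoint. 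Weighting by $1/(n+1)$---the probability of each admissible endpoint in the model of Section~\ref{sec:description}---and summing over $\ell$ recovers $\E{Z_n}$ by the law of total expectation.

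The only real obstacle I foresee is careful bookkeeping for the zero at the origin. It belongs to every path, so it contributes $1$ to $Z_n$, whereas the quantities $\mu_{n,d}$ appearing in the proof of Theorem~\ref{thm:paths-gf-zeros} deliberately exclude it (compare the trailing ``$+1$'' in \eqref{eq:id:double} and the correction in \eqref{eq:mu_n_0}). The combinatorial interpretation of \eqref{eq:id:quicksort} absorbs this correction automatically via the $m=0$ contribution $\binom{0}{0}\binom{n}{\ell}=\binom{n}{\ell}$, but one must still pay attention to such constants when collapsing the parity cases in the argument for \eqref{eq:id:double-simple}. No deep computation is required beyond this bookkeeping.
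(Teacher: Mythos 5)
Your proposal is correct and follows essentially the same route as the paper's combinatorial proof: it identifies \eqref{eq:id:double} and \eqref{eq:id:single} as the two evaluations of $\E{Z_n}$ from Theorems~\ref{thm:paths-gf-zeros} and~\ref{thm:paths-prob}, interprets \eqref{eq:id:quicksort} by counting paths through the zeros $(2m,0)$ exactly as the paper does, and handles \eqref{eq:id:double-simple} by the same parity observation the paper makes before its proof. The bookkeeping of the zero at the origin is also handled correctly, so no gaps remain.
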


We note that the expressions \eqref{eq:id:double} and
  \eqref{eq:id:double-simple} are obviously equal for odd $n$. Furthermore,
  \eqref{eq:id:double-simple} and
  \eqref{eq:id:single} only change when $n$ increases by $2$
  (to be precise from odd~$n$ to even~$n$).
Once we prove that \eqref{eq:id:double}
equals \eqref{eq:id:single} for all $n\ge 0$, it follows that both expressions
are equal to \eqref{eq:id:double-simple} for all $n\ge 0$.

\begin{proof}[Combinatorial proof of Theorem~\ref{thm:identity}]
  First, we combine the results of Theorems~\ref{thm:paths-gf-zeros}
  and~\ref{thm:paths-prob} to see that \eqref{eq:id:double} and \eqref{eq:id:single}
  are equal.

  Expression~\eqref{eq:id:quicksort} for the expected number of zeros
  has been given in \cite[displayed
  equation after (14)]{AumullerD15}: The number of
  paths from $(0,0)$ to $(n, n-2\ell)$ is $\binom{n}{\ell}$, whereas the number of
  paths from $(0,0)$ via $(2m,0)$ to $(n, n-2\ell)$ is
  $\binom{2m}{m} \binom{n-2m}{\ell-m}$. Summing over all possible $m$ and all
  possible $\ell$ and dividing by $n+1$ for the equidistribution of the
  end point yields \eqref{eq:id:quicksort}.
\end{proof}

Beside this combinatorial proof, we intend to show Theorem~\ref{thm:identity}
in alternative ways. First, we prove that \eqref{eq:id:quicksort} equals \eqref{eq:id:single}:
  Consider
  \begin{align*}
    \frac{1}{n+1}\sum_{\ell=m}^{n-m}\frac{\binom{2m}{m}\binom{n-2m}{\ell-m}}{\binom{n}{\ell}}&=
    \frac{1}{n+1}\sum_{\ell=m}^{n-m}\frac{(2m)!\, (n-2m)!\, \ell!\, (n-\ell)!}{m!\, m!\, (\ell-m)!\, (n-\ell-m)!\, n!}\\
    &=\frac{1}{(n+1)\binom{n}{2m}}\sum_{\ell=m}^{n-m}\binom{\ell}{m}\binom{n-\ell}{m}\\
    &=\frac{1}{(n+1)\binom{n}{2m}} \binom{n+1}{2m+1}=\frac{1}{2m+1},
  \end{align*}
  where \cite[(5.26)]{Graham-Knuth-Patashnik:1994}\footnote{\label{footnote:GKP:5.26}
    Identity \cite[(5.26)]{Graham-Knuth-Patashnik:1994} regards
    the summation of products of binomial coefficients
    \begin{equation*}
      \bgroup
      \newcommand{\fbinom}[2]{\Bigl(\genfrac{}{}{0pt}{}{#1}{#2}\Bigr)}
      \sum_{0 \leq k \leq \ell} \fbinom{\ell-k}{m} \fbinom{q+k}{n}
      = \fbinom{\ell+q+1}{m+n+1}
      \egroup
    \end{equation*}
    for integers $\ell \geq 0$, $m \geq 0$, $n \geq q \geq 0$.
    This is used above with $k \mapsto \ell$, $\ell \mapsto n$, $n \mapsto m$,
    $m \mapsto m$, $q\mapsto0$.}
  has been used in the
  penultimate step. Summing over
  $m$ yields
  \begin{equation*}
    \sum_{m=0}^{\floor{n/2}} \frac{1}{2m+1} = \Hodd_{n+1},
  \end{equation*}
  thus the equality between \eqref{eq:id:quicksort} and \eqref{eq:id:single}.

It remains to give a computational proof of the equality
between~\eqref{eq:id:double} and \eqref{eq:id:single}. We provide two proofs:
one motivated by ``creative telescoping'' (Section~\ref{sec:creative-telescoping}) and one completely
elementary ``human'' proof (Section~\ref{sec:human-proof}) using not more than Vandermonde's convolution.

\subsection{Proof of the Identity Using Creative Telescoping}
\label{sec:creative-telescoping}
 A computational proof of the identity between \eqref{eq:id:double} and \eqref{eq:id:single} can be generated
by the
summation package Sigma~\cite{Schneider:2015:Sigma-1.81} (see also
Schneider~\cite{Schneider:2007:symb-sum}) together with the packages
HarmonicSums~\cite{Ablinger:2015:HarmonicSums-1.0} and
EvaluateMultiSums~\cite{Schneider:2015:EvaluateMultiSums-0.96}.\footnote{The authors thank Carsten Schneider for providing the packages
  Sigma~\cite{Schneider:2015:Sigma-1.81} and
  EvaluateMultiSums~\cite{Schneider:2015:EvaluateMultiSums-0.96}, and
  for his support.}
To succeed, we have to split the case of even and odd $n$. The obtained proof
certificates are rather lengthy to verify.

Motivated by these observations, we also give a proof without computer
support. Anyhow, the key step is, as with Sigma, creative telescoping. We
prove an (easier) identity, suggested by Sigma, in the following lemma.

\begin{lemma}
  Let
  \begin{align*}
    F(n, \ell)&=\sum_{0\le k<\ell}\frac{\binom{n}{k}}{\binom{n}{\ell}},\\
    G(n, \ell)&=(\ell-1)+ (\ell-1-n)F(n, \ell)
  \end{align*}
  for $0\le \ell\le n$.
  Then
  \begin{equation}\label{eq:CT:simple}
    (n+1)F(n+1, \ell)-(n+2)F(n, \ell)=G(n, \ell+1)- G(n, \ell)
  \end{equation}
  holds for all $0\le \ell< n$.
\end{lemma}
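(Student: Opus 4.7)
The plan is a direct verification: I rewrite both sides in terms of the single quantity $F(n,\ell)$ and check that they coincide. The workhorses are Pascal's rule applied to the partial sum $S(n,\ell)=\sum_{k=0}^{\ell-1}\binom{n}{k}$ (so that $F(n,\ell)=S(n,\ell)/\binom{n}{\ell}$) together with the elementary ratios $\binom{n+1}{\ell}/\binom{n}{\ell}=(n+1)/(n+1-\ell)$ and $\binom{n}{\ell+1}/\binom{n}{\ell}=(n-\ell)/(\ell+1)$. Using these, both $F(n+1,\ell)$ and $F(n,\ell+1)$ become affine-linear expressions in $F(n,\ell)$.

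First I would handle the left-hand side. Pascal's rule yields $S(n+1,\ell)=S(n,\ell)+S(n,\ell-1)=2S(n,\ell)-\binom{n}{\ell-1}$, and the factor $(n+1-\ell)\binom{n}{\ell-1}$ collapses to $\ell\binom{n}{\ell}$. Combined with $(n+1)/\binom{n+1}{\ell}=(n+1-\ell)/\binom{n}{\ell}$, this gives $(n+1)F(n+1,\ell)=2(n+1-\ell)F(n,\ell)-\ell$. Subtracting $(n+2)F(n,\ell)$ telescopes the left-hand side to $(n-2\ell)F(n,\ell)-\ell$.

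For the right-hand side, the definition of $G$ immediately gives
\begin{equation*}
G(n,\ell+1)-G(n,\ell)=1+(\ell-n)F(n,\ell+1)+(n+1-\ell)F(n,\ell).
\end{equation*}
The recursion $S(n,\ell+1)=S(n,\ell)+\binom{n}{\ell}$ together with the ratio formula yields $F(n,\ell+1)=\frac{\ell+1}{n-\ell}\bigl(F(n,\ell)+1\bigr)$, so $(\ell-n)F(n,\ell+1)=-(\ell+1)F(n,\ell)-(\ell+1)$. A routine simplification turns the right-hand side into $(n-2\ell)F(n,\ell)-\ell$, matching the left-hand side.

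The argument is pure bookkeeping of binomial coefficients, so I do not foresee a serious obstacle. The only point requiring care is the range: the identity is asserted for $0\le\ell<n$, which guarantees that the denominator $n-\ell\ge 1$ appearing in the expression for $F(n,\ell+1)$ is nonzero, and the empty-sum convention $S(n,0)=0$ (equivalently $F(n,0)=0$) handles the boundary $\ell=0$ without any special case.
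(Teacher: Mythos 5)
Your proposal is correct and follows essentially the same route as the paper: both proofs use Pascal's rule and binomial-coefficient ratios to express $F(n+1,\ell)$ and $F(n,\ell+1)$ as affine functions of $F(n,\ell)$ (your formulas $(n+1)F(n+1,\ell)=2(n+1-\ell)F(n,\ell)-\ell$ and $F(n,\ell+1)=\frac{\ell+1}{n-\ell}(F(n,\ell)+1)$ are exactly the paper's \eqref{eq:CT:simple:10} and \eqref{eq:CT:simple:01}), after which the identity reduces to the common expression $(n-2\ell)F(n,\ell)-\ell$ on both sides. The only cosmetic difference is that you simplify each side separately, while the paper substitutes both formulas into the identity and observes the cancellation.
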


\begin{proof}
  For $0\le \ell<n$, we first compute
  \begin{align}
    F(n+1, \ell)&=\frac{1}{\binom{n+1}{\ell}}\sum_{0\le k<\ell}\binom{n+1}{k}
    =\frac{1}{\binom{n+1}{\ell}}\sum_{0\le k<\ell}\biggl(\binom{n}{k-1}+\binom{n}{k}\biggr)\notag\\
    &=-\frac{\binom{n}{\ell-1}}{\binom{n+1}{\ell}}+\frac{2\binom{n}{\ell}}{\binom{n+1}{\ell}}F(n,
    \ell)
    =-\frac{\ell}{n+1}+2\frac{n+1-\ell}{n+1}F(n,
    \ell)\label{eq:CT:simple:10}
  \end{align}
  and
  \begin{align}
    F(n, \ell+1)&=\frac1{\binom{n}{\ell+1}}\sum_{0\le k<\ell+1}\binom{n}{k}
    =\frac{\binom{n}{\ell}}{\binom{n}{\ell+1}}+
    \frac{\binom{n}{\ell}}{\binom{n}{\ell+1}}F(n, \ell)\notag\\
    &=\frac{\ell+1}{n-\ell} + \frac{\ell+1}{n-\ell}F(n, \ell).\label{eq:CT:simple:01}
  \end{align}
  By plugging \eqref{eq:CT:simple:10} and \eqref{eq:CT:simple:01} into
  \eqref{eq:CT:simple}, all occurrences of $F(n, \ell)$ cancel as well as all
  other terms, which proves \eqref{eq:CT:simple}.
\end{proof}

We are now able to prove the essential recurrence for the sum \eqref{eq:id:double} in
Theorem~\ref{thm:identity}.

\begin{lemma}\label{lemma:CT:recursion-E}
  Let
  \begin{equation*}
    E_n = \frac{4}{n+1}
    \sum_{0\leq k < \ell < \ceil{n/2}} \frac{\binom{n}{k}}{\binom{n}{\ell}}
    + \iverson*{$n$ even} \frac{1}{n+1}
    \left(\frac{2^n}{\binom{n}{n/2}} - 1\right).
  \end{equation*}
  Then
  \begin{equation}\label{eq:CT:recursion-E}
    E_{2N+1}-E_{2N}=0
    \qquad\text{and}\qquad
    E_{2N+2}-E_{2N+1}=\frac1{2N+3}
  \end{equation}
  for $N\ge 0$.
\end{lemma}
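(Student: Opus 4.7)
Write $S_n\coloneqq \sum_{\ell=0}^{\ceil{n/2}-1} F(n,\ell)$ (using $F(n,0)=0$ to include the term $\ell=0$ cost-free), so that
\begin{equation*}
  E_n = \frac{4 S_n}{n+1} + \iverson*{$n$ even}\,\frac{1}{n+1}\Bigl(\frac{2^n}{\binom{n}{n/2}}-1\Bigr).
\end{equation*}
The plan is to sum the telescoping identity \eqref{eq:CT:simple} over $\ell$ and then combine the resulting boundary term with the correction term involving $2^n/\binom{n}{n/2}$. I treat the two parities separately; the underlying algebra is the same in both cases, but the upper summation limit shifts, which is why the two identities in \eqref{eq:CT:recursion-E} look different.

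For the odd-to-even step $n=2N+1\to n+1=2N+2$, the value $\ceil{n/2}=N+1$ is unchanged, so I sum \eqref{eq:CT:simple} with $n$ replaced by $2N+1$ over $\ell=0,\dots,N$; this gives $(2N+2)S_{2N+2}-(2N+3)S_{2N+1}=G(2N+1,N+1)-G(2N+1,0)$. The endpoint values are easy: $G(2N+1,0)=-1$ (since $F(n,0)=0$) and, using $\sum_{k=0}^{N}\binom{2N+1}{k}=2^{2N}$ by symmetry, $F(2N+1,N+1)=2^{2N}/\binom{2N+1}{N+1}$, hence $G(2N+1,N+1)=N-(N+1)\cdot 2^{2N}/\binom{2N+1}{N+1}$. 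Substituting, the factor $2N+2=2(N+1)$ cancels nicely, and after invoking the identity $\binom{2N+2}{N+1}=2\binom{2N+1}{N+1}$ the two power-of-two terms collapse, leaving precisely $E_{2N+2}-E_{2N+1}=1/(2N+3)$.

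For the even-to-odd step $n=2N\to n+1=2N+1$, the ceiling jumps from $N$ to $N+1$, so summing \eqref{eq:CT:simple} over $\ell=0,\dots,N-1$ recovers only the first $N$ terms of $S_{2N+1}$; the missing term $F(2N+1,N)$ has to be added separately. The boundary values are $G(2N,0)=-1$ and, using $\sum_{k=0}^{N-1}\binom{2N}{k}=(2^{2N}-\binom{2N}{N})/2$, a short calculation gives $G(2N,N)=\frac{3N-1}{2}-\frac{(N+1)2^{2N-1}}{\binom{2N}{N}}$. Similarly $F(2N+1,N)=\frac{2^{2N}}{\binom{2N+1}{N}}-1=\frac{(N+1)2^{2N}}{(2N+1)\binom{2N}{N}}-1$. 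All $2^{2N}/\binom{2N}{N}$ contributions—from $G(2N,N)$, from $F(2N+1,N)$, and from the Iverson correction term in $E_{2N}$—cancel out, leaving a small rational that simplifies (the numerator $3N+1-2(2N+1)+(N+1)$ vanishes identically) to $E_{2N+1}-E_{2N}=0$.

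The main obstacle is purely bookkeeping: making sure that the shift in the upper summation limit for the even-to-odd step is handled by adding back exactly one boundary value $F(2N+1,N)$, and that the three occurrences of the central binomial quotient cancel in the end. No further ingredients beyond the telescoping recurrence \eqref{eq:CT:simple}, the symmetry $\binom{n}{k}=\binom{n}{n-k}$ (to evaluate partial row sums), and the elementary relation $\binom{2N+2}{N+1}=2\binom{2N+1}{N+1}$ are needed.
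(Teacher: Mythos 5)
Your proposal is correct and follows essentially the same route as the paper: both sum the telescoping certificate \eqref{eq:CT:simple} over $\ell$ up to the ceiling boundary and then reconcile the leftover boundary terms with the central-binomial correction $\frac{1}{n+1}\bigl(2^n/\binom{n}{n/2}-1\bigr)$. The only difference is bookkeeping—you split into parities from the start and evaluate $G(2N+1,N+1)$, $G(2N,N)$ and the extra term $F(2N+1,N)$ directly via half-row sums and binomial symmetry, whereas the paper keeps $n$ general and reuses \eqref{eq:CT:simple:10} and \eqref{eq:mu_n_0} for the same purpose; all of your asserted evaluations and cancellations check out.
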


\begin{proof}
Multiplying \eqref{eq:CT:simple} with $4/((n+1)(n+2))$ and summing over $0\le
\ell < \ceil{n/2}$ yields
\begin{multline*}
  \frac{4}{n+2}\sum_{0\le k<\ell<
    \ceil{n/2}}\frac{\binom{n+1}{k}}{\binom{n+1}{\ell}}
  -\frac{4}{n+1}\sum_{0\le
    k<\ell<\ceil{n/2}}\frac{\binom{n}{k}}{\binom{n}{\ell}}\\
  =\frac{4}{(n+1)(n+2)}\Bigl(G(n, \ceil{n/2})-G(n, 0)\Bigr)
\end{multline*}
for $n\ge 0$. This is equivalent to
\begin{multline}\label{eq:CT:identity:3}
  \frac{4}{n+2}\sum_{0\le k<\ell<
    \ceil{(n+1)/2}}\frac{\binom{n+1}{k}}{\binom{n+1}{\ell}} - \frac{4[n\text{ even}]}{n+2}F(n+1,n/2)
  -\frac{4}{n+1}\sum_{0\le
    k<\ell<\ceil{n/2}}\frac{\binom{n}{k}}{\binom{n}{\ell}}\\
  =\frac{4}{(n+1)(n+2)}\Bigl(\ceil{n/2}-1+(\ceil{n/2}-1-n)F(n, \ceil{n/2})+1\Bigr).
\end{multline}
We rewrite the double sums in terms of $E_n$ and $E_{n+1}$, respectively, and
use \eqref{eq:mu_n_0}. We also replace
$F(n+1, n/2)$ by \eqref{eq:CT:simple:10}. Then \eqref{eq:CT:identity:3}
is equivalent to
\begin{multline}\label{eq:identity-expectations}
  E_{n+1}-\frac{2[n\text{ odd}]}{n+2}F(n+1, (n+1)/2)-E_n\\  +\frac{2n[n\text{ even}]}{(n+1)(n+2)}-\frac{2[n\text{ even}]}{n+1}F(n, n/2)\\
 =\frac{4}{(n+1)(n+2)}\Bigl(\ceil{n/2}-(\floor{n/2}+1)F(n, \ceil{n/2})\Bigr).
\end{multline}
If $n=2N+1$, equation~\eqref{eq:identity-expectations} is equivalent to
\begin{multline*}
  E_{2N+2}-E_{2N+1}-\frac{2}{2N+3}F(2N+2,N+1)\\
  =\frac{2}{2N+3}-\frac{2}{2N+3}F(2N+1, N+1).
\end{multline*}
Using \eqref{eq:CT:simple:10}, this is equivalent to the second recurrence in \eqref{eq:CT:recursion-E}.

If $n=2N$, then \eqref{eq:identity-expectations} is equivalent to the first
recurrence in \eqref{eq:CT:recursion-E}.
\end{proof}

\begin{proof}[Computational proof of Theorem~\ref{thm:identity}]
  The definition of $E_0$ in Lemma~\ref{lemma:CT:recursion-E} implies that
  $E_0=0$. Thus \eqref{eq:id:double} and \eqref{eq:id:single}
  coincide for $n=0$. This can be extended to all $n\ge 0$ by induction on $n$
  and Lemma~\ref{lemma:CT:recursion-E}.
\end{proof}

\subsection{Proof of the Identity Using Vandermonde's Convolution}
\label{sec:human-proof}

\begingroup\newcommand{\m}{\mathchoice{\ceil{\frac n2}}{\ceil{\frac n2}}{\lceil\!\frac n2\!\rceil}{\lceil\!\frac n2\!\rceil}}
We now provide a completely elementary ``human'' proof of the identity between
\eqref{eq:id:double} and \eqref{eq:id:single}.

We first prove an identity on binomial coefficients.

\begin{lemma}\label{lemma:binomial-identity} The identity
\begin{equation*}
\sum_{0\le k\le j}\binom {n}{k}
=\sum_{0\le k \le j}2^k\binom{n-k-1}{j-k}
\end{equation*}
holds for all non-negative integers $j<n$.
\end{lemma}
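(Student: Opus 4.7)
The plan is to expand $2^k = \sum_{i=0}^{k}\binom{k}{i}$ on the right-hand side, swap the order of summation, and then reduce the resulting inner sum to an instance of Vandermonde's convolution.

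Writing out the expansion and interchanging the two sums gives
\begin{equation*}
\sum_{k=0}^{j} 2^k \binom{n-k-1}{j-k}
= \sum_{i=0}^{j}\sum_{k=i}^{j} \binom{k}{i}\binom{n-k-1}{j-k}.
\end{equation*}
The substitution $k = m+i$ rewrites the inner sum as
\begin{equation*}
\sum_{m=0}^{j-i} \binom{m+i}{i}\binom{n-m-i-1}{j-m-i},
\end{equation*}
which matches the Vandermonde-type identity
\begin{equation*}
\sum_{m=0}^{N}\binom{m+r}{r}\binom{N-m+s}{s} = \binom{N+r+s+1}{N}
\end{equation*}
(a direct consequence of Vandermonde's convolution, e.g.\ via negation of the upper index, or equivalently by reading off the coefficient of $x^N$ in $(1-x)^{-r-1}(1-x)^{-s-1}=(1-x)^{-r-s-2}$) with parameters $r=i$, $s=n-j-1$ and $N=j-i$. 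Its value is therefore $\binom{n}{j-i}$. Finally, reindexing $k=j-i$ in the remaining outer sum turns $\sum_{i=0}^{j}\binom{n}{j-i}$ into $\sum_{k=0}^{j}\binom{n}{k}$, which is precisely the left-hand side.

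The main (and essentially only) obstacle is recognizing, after the substitution $k=m+i$, that the two binomial factors align with the Vandermonde-type identity above; once this is seen, no further computation is needed, fully matching the excerpt's promise that nothing beyond Vandermonde's convolution is required.
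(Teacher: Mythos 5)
Your proof is correct and follows essentially the same route as the paper: both expand $2^k=\sum_i\binom{k}{i}$ via the binomial theorem and then evaluate the resulting inner sum over $k$ by a Vandermonde-type convolution (your identity $\sum_{m}\binom{m+r}{r}\binom{N-m+s}{s}=\binom{N+r+s+1}{N}$ is just the form of \cite[(5.26)]{Graham-Knuth-Patashnik:1994} that the paper invokes, up to the symmetry $\binom{n-k-1}{j-k}=\binom{n-k-1}{n-1-j}$ and the shift $k=m+i$), before reindexing to obtain $\sum_{k\le j}\binom{n}{k}$.
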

\begin{proof}
We denote the right hand side by $\rho$. The binomial theorem and symmetry of the
binomial coefficient yield
\begin{align*}
\rho &=\sum_{0\le i\le  k \le j}\binom{k}{i}\binom{n-k-1}{n-1-j}.
\intertext{The sum over $k$ can be evaluated by \cite[(5.26)]{Graham-Knuth-Patashnik:1994} (see also the footnote on page~\pageref{footnote:GKP:5.26} for this identity) resulting in}
\rho &=\sum_{0\le i \le j}\binom{n}{n+i-j}.
\intertext{Symmetry of the binomial coefficient and then replacing $i$ by $j-i$
lead to}
\rho &=\sum_{0\le i \le j}\binom{n}{j-i}=\sum_{0\le i \le j}\binom{n}{i}.
\end{align*}
\end{proof}

We are now able to 
establish a recurrence satisfied 
by the double sum in \eqref{eq:id:double}.

\begin{lemma}\label{lemma:recursion-binomial}For $n\ge 0$, let
  \begin{equation*}
    S_n=\sum_{0\le k< \ell< \m} \frac{\binom{n}{k}}{\binom{n}{\ell}}.
  \end{equation*}
  Then the recurrence
  \begin{equation*}
    S_n =  \frac{n+1}{n-1}S_{n-2}+\frac{n+1}{4n}
    + \iverson*{$n$ even}\Bigl(\frac{2^{n-2}}{n\binom{n}{n/2}} -
    \frac{1}{2(n-1)} - \frac{1}{4n} \Bigr)
  \end{equation*}
  holds for $n\ge 2$.
\end{lemma}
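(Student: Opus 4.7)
The plan is to use Lemma~\ref{lemma:binomial-identity} to derive a pointwise identity between summands of $S_n$ and $S_{n-2}$, and then sum carefully over~$\ell$. Setting $F(n,\ell) := \sum_{k=0}^{\ell-1}\binom{n}{k}/\binom{n}{\ell}$ so that $S_n = \sum_{\ell=1}^{\ceil{n/2}-1}F(n,\ell)$, I apply Lemma~\ref{lemma:binomial-identity} at parameter~$n$ and then at parameter $n-1$, combining the two applications via Pascal's rule to eliminate the powers of~$2$. This yields the purely binomial identity
\begin{equation*}
\sum_{k=0}^{\ell-1}\binom{n}{k} = 4\sum_{k=0}^{\ell-2}\binom{n-2}{k} + \binom{n-2}{\ell-1} - \binom{n-2}{\ell-2}.
\end{equation*}
Dividing by $\binom{n}{\ell}$ and using the elementary ratios $\binom{n-2}{\ell-1}/\binom{n}{\ell} = \ell(n-\ell)/(n(n-1))$ and $\binom{n-2}{\ell-2}/\binom{n}{\ell} = \ell(\ell-1)/(n(n-1))$ produces the pointwise identity
\begin{equation*}
F(n,\ell) = \frac{4\ell(n-\ell)}{n(n-1)}\,F(n-2,\ell-1) + \frac{\ell(n-2\ell+1)}{n(n-1)}.
\end{equation*}

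Summing over $\ell \in \{1,\ldots,\ceil{n/2}-1\}$ and shifting $\ell \mapsto \ell+1$ in the $F$-sum brings its range to $\{1,\ldots,\ceil{n/2}-2\}$, matching that of $S_{n-2}$ for both parities of~$n$ (since $\ceil{(n-2)/2} = \ceil{n/2}-1$). The algebraic identity $n(n+1) - 4(\ell+1)(n-\ell-1) = (n-2\ell-2)^2 + n$, verified by direct expansion, splits the non-constant weight as $\frac{4(\ell+1)(n-\ell-1)}{n(n-1)} = \frac{n+1}{n-1} - \frac{(n-2\ell-2)^2 + n}{n(n-1)}$; substituting gives
\begin{equation*}
S_n = \frac{n+1}{n-1}\,S_{n-2} + \Sigma_1(n) - \sum_{\ell=1}^{\ceil{n/2}-2}\frac{(n-2\ell-2)^2 + n}{n(n-1)}\,F(n-2,\ell),
\end{equation*}
where $\Sigma_1(n) = \sum_{\ell=1}^{\ceil{n/2}-1}\frac{\ell(n-2\ell+1)}{n(n-1)}$ is evaluable in closed form via the standard formulae for $\sum\ell$ and $\sum\ell^2$.

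The final task will be to simplify the residual $F$-weighted sum so that all occurrences of $F(n-2,\ell)$ disappear, leaving a rational function in~$n$ plus, for even~$n$, a central-binomial contribution. The key tool is the elementary recurrence $(n-\ell-1)F(n-2,\ell) = \ell\bigl(F(n-2,\ell-1) + 1\bigr)$, which follows directly from the definition of~$F$. Writing $n-2\ell-2 = (n-\ell-1) - (\ell+1)$, expanding the square, and applying this recurrence iteratively should reorganise the sum into a telescope whose boundary values depend on the parity of~$n$. For even~$n$ the central-column value $F(n-2,n/2-1)$ appears at the boundary, and via the elementary identity $\binom{n}{n/2} = \frac{4(n-1)}{n}\binom{n-2}{(n-2)/2}$ contributes the central-binomial term $\frac{2^{n-2}}{n\binom{n}{n/2}}$ to $R_n$; for odd~$n$ no such central contribution arises, leaving only $\frac{n+1}{4n}$. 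The main obstacle will be the telescoping step and the careful bookkeeping of the parity-dependent boundary term, though individual manipulations rely only on Vandermonde's convolution (via Lemma~\ref{lemma:binomial-identity}) and standard binomial identities, in line with the section's aim of a completely elementary proof.
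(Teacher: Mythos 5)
Your reduction is correct as far as it goes, and it takes a genuinely different route from the paper: the paper first collapses the sum over $\ell$ (via Lemma~\ref{lemma:binomial-identity} and upper summation) into the single-sum representation~\eqref{S_n_explicit} and then produces $S_{n-2}$ through a partial fraction decomposition in $k$, whereas you work pointwise in $\ell$, deriving (essentially from Pascal's rule applied twice, so Lemma~\ref{lemma:binomial-identity} is not even needed here) the identity $F(n,\ell)=\frac{4\ell(n-\ell)}{n(n-1)}F(n-2,\ell-1)+\frac{\ell(n-2\ell+1)}{n(n-1)}$ and summing it. I verified the binomial ratios, the range shift ($\ceil{(n-2)/2}=\ceil{n/2}-1$), the weight split via $n(n+1)-4(\ell+1)(n-\ell-1)=(n-2\ell-2)^2+n$, and checked numerically (for both parities) that your intermediate formula is consistent with the lemma.

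However, as written the proof is incomplete: the decisive step --- showing that $\Sigma_1(n)-\frac{1}{n(n-1)}\sum_{\ell=1}^{\ceil{n/2}-2}\bigl((n-2\ell-2)^2+n\bigr)F(n-2,\ell)$ equals $\frac{n+1}{4n}$ for odd $n$ and additionally contributes $\frac{2^{n-2}}{n\binom{n}{n/2}}-\frac{1}{2(n-1)}-\frac{1}{4n}$ for even $n$ --- is only announced (``should reorganise the sum into a telescope''), not carried out, and this is exactly where the content of the lemma lies: after your rearrangement you still face a weighted sum of the quantities $F(n-2,\ell)$, now with a quadratic weight, which is a priori no easier than the sum you started with. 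The plan is in fact viable: one checks that $a_\ell\coloneqq-2\ell^2+3(n-1)\ell-(n-1)^2$ satisfies $(\ell+1)a_{\ell+1}=(n-2-\ell)\bigl(a_\ell+(n-2\ell-2)^2+n\bigr)$, so that by your recurrence $a_{\ell+1}F(n-2,\ell+1)-a_\ell F(n-2,\ell)=\bigl((n-2\ell-2)^2+n\bigr)F(n-2,\ell)+a_\ell+(n-2\ell-2)^2+n$, which telescopes the residual sum; moreover $a_{(n-1)/2}=0$ for odd $n$ (so the boundary term vanishes and only polynomial sums remain), while for even $n$ the boundary term $a_{(n-2)/2}F\bigl(n-2,\tfrac{n-2}{2}\bigr)$ together with $\binom{n}{n/2}=\frac{4(n-1)}{n}\binom{n-2}{(n-2)/2}$ yields the central-binomial contribution. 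Until this telescoping and the parity bookkeeping (plus the evaluation of the elementary polynomial sums) are actually executed, the recurrence of the lemma has not been proved.
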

\begin{proof}
We replace the sum over $k$ in $S_n$ by the term found in Lemma~\ref{lemma:binomial-identity}
and obtain
\begin{align}
S_n&=\sum_{0\le k\le \ell< \m} \frac{\binom{n}{k}}{\binom{n}{\ell}}-\m\notag\\
&=\sum_{0\le k\le \ell< \m} \frac{2^k\binom{n-k-1}{\ell-k}}{\binom{n}{\ell}}-\m\notag\\
&=\sum_{0\le k\le \ell< \m} \frac{2^k(n-k-1)!\,k!}{n!}\binom{\ell}{k}\bigl((n-k)-(\ell-k)\bigr)-\m\notag\\
&=\sum_{0\le k\le \ell< \m} \frac{2^k(n-k)!\,k!}{n!}\binom{\ell}{k}
\notag\\&\qquad-\sum_{0\le k\le \ell< \m} \frac{2^k(n-k-1)!\,(k+1)!}{n!}\binom{\ell}{k+1}-\m.\notag
\intertext{In both sums, the sum over $\ell$ can be evaluated using upper summation (see \cite[Table~174]{Graham-Knuth-Patashnik:1994}), and we get}
S_n&=\sum_{0\le k< \m} \frac{2^k(n-k)!\,k!}{n!}\binom{\m}{k+1}
\notag\\&\qquad-\sum_{0\le k< \m} \frac{2^k(n-k-1)!\,(k+1)!}{n!}\binom{\m}{k+2}-\m.\notag
\intertext{Shifting the summation index in the second sum leads to}
S_n&=\sum_{0\le k< \m} \frac{2^k(n-k)!\, k!}{n!}\binom{\m}{k+1}
\notag\\&\qquad-\sum_{0\le k< \m} \frac{2^{k-1}(n-k)!\, k!}{n!}\binom{\m}{k+1}-\m+
\frac{1}{2}\m\notag\\
&=\frac{1}{2n!}\sum_{0\le k< \m} 2^k(n-k)!\, k!\binom{\m}{k+1}-\frac12\m\notag\\
&=\frac{\m!}{2n!}\sum_{0\le k< \m}
2^k\frac{(n-k)!}{(\m-k-1)!}\frac1{k+1}-\frac12\m\label{S_n_explicit}.
\end{align}
We intend to derive a recurrence linking $S_n$ with
$S_{n-2}$. Therefore, for $n\ge 2$, we rewrite $S_n$ as
\begin{align*}
  S_n&=\frac{\m!}{2n!}\sum_{0\le k< \m-1}
  2^k\frac{(n-k-2)!}{(\m-k-2)!}\frac{(n-k)(n-k-1)}{(\m-k-1)(k+1)} - \frac{\m}{2} +
  \frac{2^{\m-2}}{\binom{n}{\m-1}}.
\intertext{Partial fraction decomposition in $k$ yields}
  S_n&=\frac{\m!}{2n!}\sum_{0\le k< \m-1}
  2^k\frac{(n-k-2)!}{(\m-k-2)!}\Bigl( - 1+\frac{(n-\m)(n-\m+1)}{{(\m-k-1)} \m} \\&\hspace*{16.25em}+ \frac{n(n+1)}{{(k +
        1)} \m}\Bigr)
  - \frac{\m}{2} + \frac{2^{\m-2}}{\binom{n}{\m-1}}\\
  &=-\frac{\m!\, (n-\m)!}{2n!}\sum_{0\le k< \m-1}
  2^k\binom{n-k-2}{\m-k-2}\\
  &\qquad +\frac{(\m-1)!\, (n-\m+1)!}{2n!}\sum_{0\le k<
    \m-1}2^k\binom{n-k-2}{\m-k-1}\\
  &\qquad +\frac{(\m-1)!\, (n+1)n}{2n!}\sum_{0\le k< \m-1}
  2^k\frac{(n-k-2)!}{(\m-k-2)!}\frac{1}{{(k +
        1)}}\\
  &\qquad- \frac{\m}{2} + \frac{2^{\m-2}}{\binom{n}{\m-1}}.
\end{align*}
We again use Lemma~\ref{lemma:binomial-identity} for the first two summands and
\eqref{S_n_explicit} with $n$ replaced by $n-2$ for the third summand
to
obtain
\begin{align*}
  S_n&=-\frac{1}{2\binom{n}{\m}}\sum_{0\le k< \m-1}\binom{n-1}{k}
  +\frac{1}{2\binom{n}{\m-1}}\sum_{0\le k<
    \m}\binom{n-1}{k}\\
  &\qquad +\frac{n+1}{n-1}\Bigl(S_{n-2}+\frac{\m-1}2\Bigr)- \frac{\m}{2}.
\end{align*}
Adding another summand for $k=\m-1$ to the first sum leads to
\begin{align*}
  S_n&=\biggl(\frac{1}{2\binom{n}{\m-1}}-\frac{1}{2\binom{n}{\m}}\biggr)\sum_{0\le
    k< \m}\binom{n-1}{k}\\
   &\qquad
   +\frac{\binom{n-1}{\m-1}}{2\binom{n}{\m}}+\frac{n+1}{n-1}S_{n-2}+\frac{\m-1}{n-1}
   -\frac12\\
  &=\frac{n-2\m+1}{2\m\binom{n}{\m}}\sum_{0\le
    k< \m}\binom{n-1}{k}\\
   &\qquad +\frac{n+1}{n-1}S_{n-2}+\frac{\m}{2n}+\frac{\m-1}{n-1} -\frac12.
\end{align*}
If $n$ is odd, then $n-2\m+1=0$ so that
the first summand vanishes. The result follows in that case.

For even $n$, the remaining sum is $2^{n-2}$ and the result follows.
\end{proof}

\begin{proof}[Computational proof of Theorem~\ref{thm:identity}]
  Denote the expression in~\eqref{eq:id:double} by $E_n$. From
  Lemma~\ref{lemma:recursion-binomial}, we obtain the recurrence
  \begin{equation*}
    E_n = E_{n-2}+\frac{1}{n+\iverson*{$n$ even}}
  \end{equation*}
  for $n\ge 2$. As $E_0=E_1=1$, this implies that $E_n=\Hodd_{n+1}$. Thus
  \eqref{eq:id:double} and \eqref{eq:id:single} coincide.
\end{proof}

\endgroup

\section{Asymptotic Aspects}
\label{sec:asymptotics}
\begin{lemma}\label{lem:harmonic-asy}
  We have
  \begin{subequations}
  \begin{align}
    H_n &= \log n + \gamma + \frac{1}{2n} - \frac{1}{12n^2}
    + \Oh[Big]{\frac{1}{n^4}},\label{eq:asymptotic-harmonic-number}\\
    \Hodd_n &= \frac12 \log n
    + \frac{\gamma + \log 2}{2} + \frac{\iverson*{$n$ odd}}{2n}
    + \frac{3\iverson*{$n$ even}-2}{12n^2}
    + \Oh[Big]{\frac{1}{n^4}},\label{eq:asymptotic-odd-harmonic-number}\\
    \Halt_n &= -\log 2 + \frac{(-1)^n}{2n}
    - \frac{(-1)^n}{4n^2}
    + \Oh[Big]{\frac{1}{n^4}}\label{eq:asymptotic-alternating-harmonic-number}
  \end{align}
  \end{subequations}
  as $n$ tends to infinity.
\end{lemma}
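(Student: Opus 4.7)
The plan is to derive \eqref{eq:asymptotic-harmonic-number} directly from Euler--Maclaurin summation, and then obtain the other two asymptotics as consequences using the relations $\Hodd_n + \frac12 H_{\floor{n/2}} = H_n$ and $\Halt_n = H_n - 2\Hodd_n$ already recorded in Section~\ref{sec:results}. This reduces everything to bookkeeping around ordinary harmonic numbers.

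For \eqref{eq:asymptotic-harmonic-number}, I would apply the Euler--Maclaurin summation formula to $f(x) = 1/x$ on $[1,n]$, using derivatives $f^{(k-1)}(x) = (-1)^{k-1}(k-1)!/x^k$. Writing the expansion with remainder up to the $B_4$-term gives
\begin{equation*}
  H_n = \log n + \Bigl(\tfrac12 + \tfrac{1}{12} - \tfrac{1}{120} + \dotsb\Bigr) + \tfrac{1}{2n} - \tfrac{1}{12n^2} + \Oh{1/n^4},
\end{equation*}
and identifying the constant with $\gamma$ (the limit $\lim_{n\to\infty}(H_n - \log n)$) yields \eqref{eq:asymptotic-harmonic-number}. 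Alternatively, one can simply cite \cite[5.2.2~(24)]{Knuth:1998:Art:3}.

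For \eqref{eq:asymptotic-odd-harmonic-number}, I would split into the two parities. For even $n$, substitute $m=n/2$ in \eqref{eq:asymptotic-harmonic-number} to get
$\frac12 H_{n/2} = \frac12\log n - \frac{\log 2}{2} + \frac{\gamma}{2} + \frac{1}{2n} - \frac{1}{6n^2} + \Oh{1/n^4}$,
and subtract from $H_n$; the $\frac{1}{2n}$ contributions cancel and the $1/n^2$ terms combine to $+1/(12n^2)$, matching the formula for $\iverson*{$n$ even}=1$. For odd $n$, set $m=(n-1)/2$ and expand each piece $\log((n-1)/2) = \log n - \log 2 - 1/n - 1/(2n^2) + \Oh{1/n^3}$ and $1/(n-1) = 1/n + 1/n^2 + \Oh{1/n^3}$, $1/(n-1)^2 = 1/n^2 + \Oh{1/n^3}$ in powers of $1/n$. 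Assembling $\frac12 H_{(n-1)/2}$ to order $1/n^2$ and subtracting from $H_n$ leaves $\frac{1}{2n}$ surviving and a combined $1/n^2$-coefficient of $-1/6$, again matching the closed form $(3\iverson*{$n$ even}-2)/(12n^2)$.

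For \eqref{eq:asymptotic-alternating-harmonic-number}, I would plug the two expansions just obtained into $\Halt_n = H_n - 2\Hodd_n$: the $\log n$ and $\gamma$ terms cancel, leaving $-\log 2$ as the constant. A separate check of even and odd $n$ shows the surviving $1/n$- and $1/n^2$-coefficients agree with $(-1)^n/(2n)$ and $-(-1)^n/(4n^2)$. The only real subtlety is carrying enough precision in the odd-$n$ expansions so the final error is honestly $\Oh{1/n^4}$ (rather than $\Oh{1/n^3}$); expanding $\log(1-1/n)$ and $1/(n-1)^k$ to one order higher than naively needed takes care of this. No single step is hard; the main obstacle is bookkeeping the parity cases and ensuring the error orders are preserved through the subtractions.
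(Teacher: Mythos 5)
Your proposal is correct and follows essentially the same route as the paper: start from the well-known expansion of $H_n$, then use the relations $\Hodd_n = H_n - \tfrac12 H_{\floor{n/2}}$ and $\Halt_n = H_n - 2\Hodd_n$. The only difference is that you carry out the parity bookkeeping by hand (splitting $n$ even/odd), whereas the paper encodes the parity via $\floor{n/2}=(n-1+\iverson*{$n$ even})/2$ and delegates the expansion to computer algebra; your computed coefficients agree with the stated formulas.
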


Here, $\gamma = 0.5772156649\!\dots$ is the \emph{Euler--Mascheroni constant}.

\begin{proof}
  The asymptotic expansion~\eqref{eq:asymptotic-harmonic-number} is well-known,
  cf.\@ for instance \cite[(9.88)]{Graham-Knuth-Patashnik:1994}.

  We write $\alpha_n=\iverson*{$n$ even}$ and thus
  $\floor{n/2}=(n-1+\alpha_n)/2$. As $\alpha_n$ is obviously bounded, we can
  simply plug this expression into the asymptotic expansions and simplify all
  occurring higher powers of $\alpha_n$ by the fact that $\alpha_n^2=\alpha_n$. Using the relations
  $\Hodd_n=H_n- H_{\floor{n/2}}/2$ and $\Halt_n=H_n - 2\Hodd_n$ leads to the
  expansions~\eqref{eq:asymptotic-odd-harmonic-number} and
  \eqref{eq:asymptotic-alternating-harmonic-number}, respectively.

  The actual asymptotic computations\footnote{A worksheet containing
    the computations can be found at
    \url{http://www.danielkrenn.at/downloads/quicksort-paths-full/quicksort-paths.ipynb}.}
  have been carried out using the asymptotic
  expansions module \cite{Hackl-Krenn:2015:asy-sage} of
  SageMath~\cite{SageMath:2016:7.4}.
\end{proof}

\begin{corollary}\label{cor:main-asy}
  The expected numbers of zeros and up-from-zero situations are
  \begin{align*}
    \E{Z_n} &=
    \frac12 \log n + \frac{\gamma+\log2}{2}
    + \frac{1+\iverson*{$n$ even}}{2n}
    - \frac{2+9\iverson*{$n$ even}}{12n^2}
    + \frac{\iverson*{$n$ even}}{n^3}
    + \Oh[Big]{\frac{1}{n^4}},\\
    \E{Z^\nearrow_n} &=
    \frac14 \log n + \frac{\gamma+\log2}{4}
    + \frac{\iverson*{$n$ odd}}{4n}
    + \frac{3\iverson*{$n$ even} - 2}{24n^2}
    + \Oh[Big]{\frac{1}{n^4}},
  \end{align*}
  respectively, asymptotically as $n$ tends to infinity.
\end{corollary}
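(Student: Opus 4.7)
The plan is to reduce the statement to Lemma~\ref{lem:harmonic-asy} via the two exact identities already at our disposal, namely $\E{Z_n}=\Hodd_{n+1}$ from Theorem~\ref{thm:paths-prob} and $\E{Z^\nearrow_n}=\tfrac12\Hodd_n$ from Corollary~\ref{cor:exp-up-from-zero}. For the second quantity there is nothing to do beyond multiplying the expansion~\eqref{eq:asymptotic-odd-harmonic-number} by $\tfrac12$, so the entire task is to carry out the shift $n\mapsto n+1$ in the expansion of $\Hodd$ for the first quantity.

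First I would substitute $n+1$ for $n$ in \eqref{eq:asymptotic-odd-harmonic-number}, noting that the parity indicators become $\iverson*{$n{+}1$ odd}=\iverson*{$n$ even}$ and $\iverson*{$n{+}1$ even}=\iverson*{$n$ odd}$. Next I would expand each of the analytic factors in powers of $1/n$ up to order $n^{-4}$, using
\begin{equation*}
\tfrac12\log(n+1)=\tfrac12\log n+\tfrac{1}{2n}-\tfrac{1}{4n^2}+\tfrac{1}{6n^3}+\Oh{1/n^4},
\end{equation*}
together with the geometric expansions of $1/(n+1)$ and $1/(n+1)^2$. This produces four summands, each multiplied by either $1$ or a parity indicator, which I then collect degree by degree in $1/n$.

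The only slightly delicate step is the combination of the parity-dependent contributions. Writing $\iverson*{$n$ odd}=1-\iverson*{$n$ even}$, the coefficient of $1/n^2$ becomes
\begin{equation*}
-\tfrac14-\tfrac{\iverson*{$n$ even}}{2}+\tfrac{3\iverson*{$n$ odd}-2}{12}
=-\tfrac{2+9\iverson*{$n$ even}}{12},
\end{equation*}
and an analogous simplification yields the coefficient $\iverson*{$n$ even}$ of $1/n^3$, matching the stated formula. The constant and $\log n$ terms are immediate, and the $1/n$ term reads $\tfrac12+\tfrac{\iverson*{$n$ even}}{2}=\tfrac{1+\iverson*{$n$ even}}{2}$.

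For the second asymptotic, Corollary~\ref{cor:exp-up-from-zero} together with a direct halving of \eqref{eq:asymptotic-odd-harmonic-number} gives the expansion without any further shift. I expect no real obstacle here; the work is purely the careful bookkeeping of parity indicators in the shift $n\mapsto n+1$, which can be verified term by term (or by the computer-algebra worksheet referenced in the proof of Lemma~\ref{lem:harmonic-asy}).
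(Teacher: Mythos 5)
Your proposal is correct and follows exactly the paper's route: the paper's proof is the one-line instruction to combine Theorem~\ref{thm:paths-prob} and Corollary~\ref{cor:exp-up-from-zero} with Lemma~\ref{lem:harmonic-asy}, and your work merely makes the shift $n\mapsto n+1$ in \eqref{eq:asymptotic-odd-harmonic-number} explicit. Your bookkeeping of the parity indicators and the resulting coefficients $\frac{1+\iverson*{$n$ even}}{2n}$, $-\frac{2+9\iverson*{$n$ even}}{12n^2}$, and $\frac{\iverson*{$n$ even}}{n^3}$ checks out.
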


\begin{proof}
  Combine Theorem~\ref{thm:paths-prob} and 
  Corollary~\ref{cor:exp-up-from-zero} with Lemma~\ref{lem:harmonic-asy}.
\end{proof}


\section{Distribution}
\label{sec:distribution}

In this section, we study the distribution of the number
of zeros. As for the expectation $\E{Z_n}$, we get an exact
formula as well as an asymptotic formula.

\begin{theorem}\label{thm:distribution}
  Let $r\in\N$. For positive lengths~$n\geq 2r-2$, the probability that a
  randomly chosen path~$W_n$ has exactly~$r$ zeros is
  \begin{align*}
    \P{Z_n = r} &= \frac{2^r}{n+1}
    \frac{\binom{\ceil{n/2}}{r}}{\binom{n}{r}}
    \bigg( \frac{2\ceil{n/2}}{r(r+1)} + \frac{r-1}{r+1} +
        \iverson*{$n$ even} \frac{1}{r} \bigg) \\
    &\phantom{=}\;+ \iverson*{$n$ even}
    \frac{2^{r-1}(r-1)}{(n+1)r} \frac{\binom{n/2}{r-1}}{\binom{n}{r}}
  \end{align*}
  and we have $\P{Z_0 = r} = \iverson{r=1}$. Otherwise, if $1\le n<2r-2$, we have $\P{Z_n = r} = 0$.
\end{theorem}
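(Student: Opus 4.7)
The plan is to push the generating-function method of Section~\ref{sec:gf} one step further: instead of differentiating $Q_d(z,u)$ once and setting $u=1$ as in Lemma~\ref{lem:coeffs-zeros}, I would extract the full coefficient of $u^{r-1}$ so as to count paths with exactly $r$ zeros (the origin not being marked by $u$). From Lemma~\ref{lem:transform-gf-zeros} one has
\begin{equation*}
Q_d(z,u) = \frac{v^{\abs{d}}(1+v^2)}{(1+v^2)-2v^2 u} = \frac{v^{\abs{d}}}{1-\tfrac{2v^2}{1+v^2}\,u},
\end{equation*}
so a geometric series in $u$ gives $[u^{r-1}]Q_d(z,u) = 2^{r-1}v^{\abs{d}+2r-2}/(1+v^2)^{r-1}$. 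Applying Lemma~\ref{lem:extract-coeffs-worlds} and substituting $w=v^2$ then yields the number $N_{n,d,r}$ of paths of length $n$ from $(0,0)$ to $(n,d)$ with exactly $r$ zeros:
\begin{equation*}
N_{n,d,r}
= 2^{r-1}[v^{n-\abs{d}-2r+2}](1-v^2)(1+v^2)^{n-r}
= 2^{r-1}\biggl(\binom{n-r}{\ell-r+1}-\binom{n-r}{\ell-r}\biggr),
\end{equation*}
where $\ell := (n-\abs{d})/2$.

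Since the endpoint $D$ is uniform over the $n+1$ admissible values and, conditionally, paths ending at $(n,d)$ are uniform,
\begin{equation*}
\P{Z_n=r} = \frac{1}{n+1}\sum_d \frac{N_{n,d,r}}{\binom{n}{(n-d)/2}}.
\end{equation*}
Direct cancellation of factorials yields the key identities
\begin{equation*}
\frac{\binom{n-r}{\ell-r}}{\binom{n}{\ell}} = \frac{\binom{\ell}{r}}{\binom{n}{r}},
\qquad
\frac{\binom{n-r}{\ell-r+1}}{\binom{n}{\ell}} = \frac{\binom{\ell}{r-1}(n-\ell)}{r\binom{n}{r}},
\end{equation*}
from which $N_{n,d,r}/\binom{n}{\ell} = 2^{r-1}\binom{\ell}{r-1}(n-2\ell+r-1)/(r\binom{n}{r})$. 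By the symmetry $d\leftrightarrow -d$, the outer sum equals twice the contribution from $d>0$ (where $\ell$ ranges over $\{0,\dots,\ceil{n/2}-1\}$) plus, for even $n$, the $d=0$ term at $\ell=n/2$; in that term the factor $n-2\ell+r-1$ collapses to $r-1$, giving precisely the second summand of the claimed formula.

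The remaining evaluation is the inner sum $\sum_{\ell=r-1}^{K-1}\binom{\ell}{r-1}(n-2\ell+r-1)$ with $K=\ceil{n/2}$. Hockey-stick $\sum_{\ell=r-1}^{K-1}\binom{\ell}{r-1}=\binom{K}{r}$ plus the absorption $\ell\binom{\ell}{r-1}=(r-1)\binom{\ell}{r-1}+r\binom{\ell}{r}$ collapses it to $\binom{K}{r}\bigl((n-r+1)(r+1)-2r(K-r)\bigr)/(r+1)$. Substituting $2K=n+1$ for odd $n$ and $2K=n$ for even $n$ turns this into $\binom{K}{r}(2K+r(r-1))/(r+1)$ and $\binom{K}{r}(2K+r^2+1)/(r+1)$ respectively; dividing by $(n+1)r\binom{n}{r}$ and combining with the even-$n$ boundary contribution reproduces exactly the bracketed expression of the theorem, after noting that $\frac{2K}{r(r+1)}+\frac{r-1}{r+1}+\frac{1}{r}=\frac{2K+r^2+1}{r(r+1)}$.

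The case $n=0$ is trivial (the single point $(0,0)$ gives $Z_0=1$ almost surely), and for $1\le n<2r-2$ the $r-1$ non-origin zeros would need to occupy distinct positive even abscissae $\le n$, forcing $n\ge 2(r-1)$, so $\P{Z_n=r}=0$. The main obstacle is the bookkeeping in the final step: the parity split between odd and even $n$ has to be tracked through the sum so that the reassembled expression matches the slightly asymmetric closed form stated in the theorem.
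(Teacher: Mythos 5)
Your proposal is correct and follows essentially the same route as the paper's proof: extracting $[u^{r-1}]$ from the transformed generating function of Lemma~\ref{lem:transform-gf-zeros} via a geometric series, pulling back to the $z$-world with Lemma~\ref{lem:extract-coeffs-worlds}, normalizing by $\binom{n}{\ell}$, splitting off the $d=0$ term by symmetry, and evaluating the remaining single sum by upper summation (hockey stick) with the parity substitution at the end. The only differences are cosmetic (binomial-ratio identities in place of factorial bookkeeping), and all your intermediate identities check out.
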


This exact formula admits a local limit theorem towards a discrete
distribution.\footnote{This local limit theorem of the number of zeros
  might be used for a distributional analysis of the optimal
  dual-pivot partitioning strategy. This is beyond the scope of this
  article and we defer to future work here.}
The details are as follows.

\begin{corollary}\label{cor:distribution-asy}
  Let $0<\eps\leq\frac12$. For positive integers $r$ with $r =
  \Oh{n^{1/2-\eps}}$, we have asymptotically
  \begin{equation*}
    \P{Z_n = r} = \frac{1}{r(r+1)} \left(1 + \Oh{1/n^{2\eps}}\right)
  \end{equation*}
  as $n$ tends to infinity.
\end{corollary}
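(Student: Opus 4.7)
My plan is to start from the exact expression in Theorem~\ref{thm:distribution} and identify the leading asymptotic contribution, showing that the remaining pieces are absorbed into the claimed relative error $\Oh{1/n^{2\eps}}$. The central quantity to estimate is the ratio
\begin{equation*}
  R_n(r) \coloneqq 2^r\,\frac{\binom{\ceil{n/2}}{r}}{\binom{n}{r}}
  = \prod_{i=0}^{r-1}\frac{2\ceil{n/2}-2i}{n-i},
\end{equation*}
which (treating the two parities of $n$ separately to replace the numerator factors by $n-2i$ or $n+1-2i$) I would rewrite as $\prod_{i=0}^{r-1}\bigl(1 - \frac{i+\Oh{1}}{n-i}\bigr)$. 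Under the assumption $r = \Oh{n^{1/2-\eps}}$ we have $\sum_{i=0}^{r-1}\frac{i}{n-i} = \Oh{r^2/n} = \Oh{n^{-2\eps}}$, so taking logarithms and applying the standard bound $\log(1-x) = -x + \Oh{x^2}$ (valid because $i/(n-i) \to 0$ uniformly for $i<r$) yields
\begin{equation*}
  R_n(r) = 1 + \Oh{n^{-2\eps}}.
\end{equation*}
The analogous estimate for the second summand in Theorem~\ref{thm:distribution} gives $2^{r-1}\binom{n/2}{r-1}/\binom{n}{r} = \Oh{r/n}$, which I will bound crudely in the next step.

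Next, I plug these into the exact formula. The first summand decomposes as $\frac{R_n(r)}{n+1}$ times the bracketed factor $\frac{2\ceil{n/2}}{r(r+1)} + \frac{r-1}{r+1} + \iverson*{$n$ even}\frac{1}{r}$. The dominant contribution is
\begin{equation*}
  \frac{R_n(r)}{n+1}\cdot\frac{2\ceil{n/2}}{r(r+1)}
  = \frac{1}{r(r+1)}\bigl(1 + \Oh{n^{-2\eps}}\bigr),
\end{equation*}
since $\frac{2\ceil{n/2}}{n+1} = 1 + \Oh{1/n}$ in both parities. The two remaining terms in the bracket contribute at most $\Oh{1/n}$, and the second summand of Theorem~\ref{thm:distribution} contributes at most $\Oh{r/n^2}$. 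Both are absorbed into the target error because
\begin{equation*}
  \frac{1}{n} \bigg/ \frac{1}{r(r+1)\,n^{2\eps}}
  = \frac{r(r+1)\,n^{2\eps}}{n}
  = \Oh{1}
\end{equation*}
precisely under the hypothesis $r = \Oh{n^{1/2-\eps}}$; this is the sharp constraint that forces the condition in the corollary.

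I expect the only mildly delicate point to be the uniform handling of the two parities of $n$ together with the $\ceil{\cdot}$ in $\ceil{n/2}$: one must be careful that the $\Oh{\cdot}$ constants in the product estimate for $R_n(r)$ and in the replacement $\frac{2\ceil{n/2}}{n+1} = 1 + \Oh{1/n}$ do not depend on $r$. Once the two cases are written out, everything else is routine arithmetic with binomials, and combining the estimates above yields $\P{Z_n = r} = \frac{1}{r(r+1)}(1 + \Oh{n^{-2\eps}})$ as required.
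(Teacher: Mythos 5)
Your proposal is correct and follows essentially the same route as the paper: both start from the exact formula of Theorem~\ref{thm:distribution}, show $2^r\binom{\ceil{n/2}}{r}/\binom{n}{r}=1+\Oh{1/n^{2\eps}}$ via a product estimate that reduces to $r^2/n=\Oh{n^{-2\eps}}$ (the paper packages this as Lemma~\ref{lem:quo-factorials} applied to numerator and denominator, you take logarithms of the ratio directly), and then absorb the $\Oh{1/n}$ bracket remainders and the $\Oh{r/n^2}$ second summand into the relative error using $r(r+1)n^{2\eps}=\Oh{n}$. The parity/uniformity point you flag is handled correctly by your uniform bound $\sum_{i<r} \frac{i+\Oh{1}}{n-i}=\Oh{r^2/n}$, so no gap remains.
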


Note that the limiting distribution $\P{Z_\infty=r}=1/(r(r+1))$ is reminiscent of a
truncated version of the Zeta distribution with parameter $s=2$. While the
untruncated Zeta distribution has infinite mean, this is not the case for $Z_n$
due to the restriction $r=\Oh{n^{1/2-\eps}}$. However, $\E{Z_n}=\frac12 \log
n+\Oh{1}$ (see Corollary~\ref{cor:main-asy}) tends to infinity, as expected.

\begin{proof}[Proof of Theorem~\ref{thm:distribution}]
  Again, we assume $d\geq0$ (by symmetry of the generating
  function~\eqref{eq:path-gf-zeros}). Note that $Q_d$ counts the
  number of zeros by the variable~$u$ except for the first zero (at
  $(0,0)$). By starting with Lemma~\ref{lem:transform-gf-zeros} and
  some rewriting, we can extract the $(r-1)$st coefficient with
  respect to $u$ as
  \begin{equation*}
    [u^{r-1}] Q_d(z,u)
    = [u^{r-1}] \frac{v^d}{1-u\frac{2v^2}{1+v^2}}
    = \frac{2^{r-1} v^{2(r-1)+d}}{(1+v^2)^{r-1}}.
  \end{equation*}
  Next, we extract the coefficient of $z^n$. We use
  Lemma~\ref{lem:extract-coeffs-worlds} to obtain
  \begin{align*}
    [z^n u^{r-1}] Q_d(z,u)
    &= [v^n] (1-v^2) (1+v^2)^{n-1} \frac{2^{r-1} v^{2(r-1)+d}}{(1+v^2)^{r-1}} \\
    &=2^{r-1}\,[v^{n-d-2(r-1)}] (1-v^2) (1+v^2)^{n-r}.
  \end{align*}
  We set $\ell=\frac12(n-d)$ and get
  \begin{align*}
    [z^n u^{r-1}] Q_d(z,u)
    &= 2^{r-1}\,[v^{2\ell-2r+2}] (1-v^2) (1+v^2)^{n-r} \\
    &= 2^{r-1}\,[v^{\ell-r+1}] (1-v) (1+v)^{n-r} \\
    &= 2^{r-1} \binom{n-r}{\ell-r+1} - 2^{r-1} \binom{n-r}{\ell-r} \\
    &= 2^{r-1} \binom{n-r}{n-\ell-1} - 2^{r-1} \binom{n-r}{n-\ell}.
  \end{align*}
  Note that we have to assume $n-r\geq0$ to make this work. Otherwise,
  anyhow, there are no paths with exactly $r$ zeros (and positive
  length~$n$).

  If $\ell>r-1$ we can rewrite the previous formula to obtain
  \begin{equation*}
    [z^n u^{r-1}] Q_d(z,u)
    = 2^{r-1} \binom{n-r}{n-\ell} \left(\frac{n-\ell}{\ell-r+1} - 1\right)
    = 2^{r-1} \frac{d+r-1}{\ell-r+1} \binom{n-r}{n-\ell},
  \end{equation*}
  if $\ell=r-1$, then we have $[z^n u^{r-1}] Q_d(z,u) = 2^{r-1}$ (independently of~$n$),
  and if $\ell<r-1$ we get $[z^n u^{r-1}] Q_d(z,u) = 0$.

  To finish the proof, we have to normalize this number of paths with exactly $r$
  zeros and then sum up over all $\ell$. So let us start with the
  normalization part. We set
  \begin{equation*}
    \lambda_{n,r,d} = \P{Z_n = r \mid
      \text{$W_n$ ends in $(n,d)$}}
    = \frac{[z^n u^{r-1}] Q_d(z,u)}{[z^n] Q_d(z,1)}
  \end{equation*}
  for $n\equiv d\pmod 2$ and $\lambda_{n,r,d} = 0$ otherwise. The denominator
  $[z^n] Q_d(z,1)$ was already determined in Lemma~\ref{lem:coeffs-zeros}.

  If $\ell>r-1$, we have
  \begin{align*}
    \lambda_{n,r,d}
    &= 2^{r-1} \frac{d+r-1}{\ell-r+1} \binom{n-r}{n-\ell} \Big/ \binom{n}{\ell} \\
    &= 2^{r-1} \frac{d+r-1}{\ell-r+1} \frac{(n-r)!\,\ell!\,(n-\ell)!}{
      (n-\ell)!\,(\ell-r)!\,n!} \\
    &= 2^{r-1} \frac{(n-r)!}{n!} \frac{\ell!\,(d+r-1)}{(\ell-r+1)!},
  \end{align*}
  where the last line holds for $\ell=r-1$ as well. In particular, we
  obtain
  \begin{equation*}
    \lambda_{n,r,0}
    = 2^{r-1} (r-1) \iverson{n \geq 2r-2} \frac{(n/2)!}{n!} \frac{(n-r)!}{(n/2-r+1)!}
    = \frac{2^{r-1} (r-1)}{r} \frac{\binom{n/2}{r-1}}{\binom{n}{r}}.
  \end{equation*}

  We have arrived at the summation of the $\lambda_{n,r,d}$. The result
  follows as
  \begin{align*}
    \P{Z_n = r}
    &= \sum_{d=-n}^n \P{Z_n = r \mid \text{$W_n$ ends in $(n,d)$}}
    \P{\text{$W_n$ ends in $(n,d)$}} \\
    &= \frac{1}{n+1} \sum_{d=-n}^n \lambda_{n,r,d}\\
    &= \frac{2}{n+1} \sum_{\ell=0}^{\ceil{n/2}-1} \lambda_{n,r,n-2\ell}
    + \frac{1}{n+1} \lambda_{n,r,0},
  \end{align*}
  and plugging in $\lambda_{n,r,d}$ gives the intermediate result
  \begin{equation}\label{eq:distribution-intermediate}
  \begin{split}
    \P{Z_n = r}
    &= 2^{r} \frac{(n-r)!}{(n+1)!} \sum_{\ell=r-1}^{\ceil{n/2}-1}
     \frac{\ell!\,(n-2\ell+r-1)}{(\ell-r+1)!} \\
    &\phantom{=}\;+ \iverson*{$n$ even}
    \frac{2^{r-1}(r-1)}{(n+1)r} \binom{n/2}{r-1} \Big/ \binom{n}{r}
  \end{split}
  \end{equation}
  for $n\geq r$.

  To complete the proof, we have to show that
  \begin{equation}\label{eq:distribution-sum}
     \sum_{\ell=r-1}^{\ceil{n/2}-1}
     \frac{\ell!\,(n-2\ell+r-1)}{(\ell-r+1)!}
     = r!\, \binom{\ceil{n/2}}{r}
     \bigg( \frac{2\ceil{\frac{n}{2}}}{r(r+1)} + \frac{r-1}{r+1} +
       \iverson*{$n$ even} \frac{1}{r} \bigg)
  \end{equation}
  and to rewrite the factorials as binomial coefficients.

Of course, \eqref{eq:distribution-sum} can be proved computationally by, for example,
Sigma~\cite{Schneider:2015:Sigma-1.81}. However, we give a direct proof here.

  We obtain
  \begin{align*}
     \sum_{\ell=r-1}^{\ceil{n/2}-1}&
     \frac{\ell!\,(n-2\ell+r-1)}{(\ell-r+1)!}\\ &=
     (r-1)! \sum_{\ell=r-1}^{\ceil{n/2}-1}
     \binom{\ell}{r-1}(n-2\ell+r-1)\\
     &=(r-1)! \sum_{\ell=r-1}^{\ceil{n/2}-1} \binom{\ell}{r-1}(n+1+r-2(\ell+1))\\
     &=(n+1+r) (r-1)! \sum_{\ell=r-1}^{\ceil{n/2}-1} \binom{\ell}{r-1}-2r!
     \sum_{\ell=r-1}^{\ceil{n/2}-1} \binom{\ell+1}{r}.
     \intertext{Using upper summation,
       cf.~\cite[5.10]{Graham-Knuth-Patashnik:1994}, yields}
     \sum_{\ell=r-1}^{\ceil{n/2}-1}&
     \frac{\ell!\,(n-2\ell+r-1)}{(\ell-r+1)!}\\
     &=(n+1+r)(r-1)!\, \binom{\ceil{n/2}}{r} - 2r!\, \binom{\ceil{n/2}+1}{r+1}\\
     &=(r-1)!\, \binom{\ceil{n/2}}{r}\left(n+1+r - 2\frac{r}{r+1}(\ceil{n/2}+1)\right)\\
     &=(r-1)!\, \binom{\ceil{n/2}}{r}\left(n+r-1 -
       2\ceil{n/2}+\frac{2\ceil{n/2}}{r+1}+\frac{2}{r+1}\right).
  \end{align*}
  The identity \eqref{eq:distribution-sum} follows by replacing $n-2\ceil{n/2}$ with $\iverson*{$n$
      even} - 1$ and by collecting terms.
\end{proof}

As a next step, we want to prove Corollary~\ref{cor:distribution-asy}, which
extracts the asymptotic behavior of the distribution
(Theorem~\ref{thm:distribution}). To show that asymptotic formula, we
will use
the following auxiliary result.

\begin{lemma}\label{lem:quo-factorials}
 Let $0<\eps\leq\frac12$. For integers $c$ with $c =
 \Oh{N^{1/2-\eps}}$ we have
 \begin{equation*}
   c!\,\binom{N}{c}  = N^c(1+\Oh{1/N^{2\eps}}).
 \end{equation*}
\end{lemma}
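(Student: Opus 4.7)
The plan is to write $c!\binom{N}{c}$ as the falling factorial $N(N-1)\cdots(N-c+1) = \prod_{j=0}^{c-1}(N-j)$ and compare it to $N^c$ by factoring out $N$ from each term. This gives
\begin{equation*}
  \frac{c!\binom{N}{c}}{N^c} = \prod_{j=0}^{c-1}\Bigl(1-\frac{j}{N}\Bigr),
\end{equation*}
so the task reduces to showing that this product equals $1+\Oh{1/N^{2\eps}}$ under the hypothesis $c=\Oh{N^{1/2-\eps}}$.

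The natural route is to take logarithms and expand. For every $0\le j\le c-1$ we have $j/N = \Oh{c/N}=\Oh{N^{-1/2-\eps}}$, which tends to $0$, so the Taylor expansion $\log(1-x)=-x+\Oh{x^2}$ applies uniformly. Summing gives
\begin{equation*}
  \sum_{j=0}^{c-1}\log\Bigl(1-\frac{j}{N}\Bigr)
  = -\frac{1}{N}\sum_{j=0}^{c-1} j + \Oh[Big]{\frac{1}{N^2}\sum_{j=0}^{c-1} j^2}
  = -\frac{c(c-1)}{2N} + \Oh[Big]{\frac{c^3}{N^2}}.
\end{equation*}
Using $c=\Oh{N^{1/2-\eps}}$, the main term is $\Oh{c^2/N}=\Oh{N^{-2\eps}}$ and the error term is $\Oh{N^{-1/2-3\eps}}$, which is absorbed by $\Oh{N^{-2\eps}}$ because $-1/2-3\eps\le-2\eps$ for every $\eps>0$.

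Exponentiating and using $e^{x}=1+\Oh{x}$ for $x\to 0$ yields $\prod_{j=0}^{c-1}(1-j/N) = 1+\Oh{N^{-2\eps}}$, and multiplying by $N^c$ gives the claim. The only delicate point is to make sure the implied constants in the intermediate $\Oh{\cdot}$-estimates do not depend on~$j$; this is immediate since we are bounding uniformly over $0\le j<c$ and $c/N\to 0$, so the Taylor remainder bound for $\log(1-x)$ applies with a single absolute constant once $N$ is large enough.
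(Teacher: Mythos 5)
Your argument is correct: the decomposition $c!\binom{N}{c}=N^c\prod_{0\le j<c}(1-j/N)$, the uniform Taylor bound for $\log(1-x)$ (valid since $j/N\le c/N=\Oh{N^{-1/2-\eps}}\to 0$), the estimates $c^2/N=\Oh{N^{-2\eps}}$ and $c^3/N^2=\Oh{N^{-1/2-3\eps}}=\Oh{N^{-2\eps}}$, and the final exponentiation are all sound, and you rightly flag and resolve the uniformity of the implied constants. The paper, however, proceeds more elementarily from the same product: it notes the trivial upper bound $c!\binom{N}{c}\le N^c$ and then gets the matching lower bound directly from the Weierstrass-type inequality $\prod_{0\le i<c}(1-i/N)\ge 1-\sum_{0\le i<c} i/N\ge 1-c^2/(2N)=1+\Oh{1/N^{2\eps}}$, with no logarithms or Taylor expansions at all. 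The paper's route is shorter, needs no uniformity discussion, and gives an explicit one-sided error constant; your logarithmic route is heavier but slightly more informative, since it identifies the leading correction $e^{-c(c-1)/(2N)}$ and would extend readily to sharper or higher-order asymptotics if one needed them. Both are valid proofs of the stated lemma.
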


\begin{proof}
The inequality $N^c \ge c!\,\binom{N}{c}$ is trivial. We observe
 \begin{align*}
   c!\,\binom{N}{c}
   &= N^{c}\cdot\prod_{0\le i < c}\left(1-\frac{i}{N}\right)
   \ge N^{c}\cdot\biggl(1-\sum_{0\le i < c}\frac{i}{N}\biggr)
   \ge N^c \left(1-\frac{c^2}{2N}\right)\\
   &= N^c \left(1+\Oh{1/N^{2\eps}}\right),
 \end{align*}
where the assumption on $c$ has been used in the last step.
\end{proof}

\begin{proof}[Proof of Corollary~\ref{cor:distribution-asy}]
  By using Lemma~\ref{lem:quo-factorials}, the exact result of
  Theorem~\ref{thm:distribution} becomes
  \begin{align*}
    \P{Z_n = r}
    &= \frac{2^r}{n} \frac{n^r}{2^r} \frac{1}{n^r}
    \left(\frac{n}{r(r+1)} + \Oh{1}\right)
    \left(1+\Oh{1/n^{2\eps}}\right) \\
    &\phantom{=}\;+ \iverson*{$n$ even}
    \frac{2^{r-1}(r-1)}{n} \frac{n^{r-1}}{2^{r-1}} \frac{1}{n^r}
    \left(1+\Oh{1/n^{2\eps}}\right) \\
    &= \frac{1}{r(r+1)} \left(1 + \Oh{1/n^{2\eps}}\right),
  \end{align*}
  as claimed.
\end{proof}



\part{Dual-Pivot Quicksort}
\label{sec:quicksort}

In this second part of this work, we analyze a
partitioning strategy and the dual-pivot quicksort
algorithm itself.

As mentioned in the introduction, the number of key comparisons of
dual-pivot quicksort depends on the concrete partitioning procedure.  This
is in contrast to the standard quicksort algorithm with only one
pivot, where partitioning always has cost $n-1$ (for partitioning $n$
elements; one is taken as the pivot).
For example, if one wants to classify a
large element, i.\,e., an element larger than the larger pivot, 
comparing it with the larger pivot is unavoidable,
but it depends on the partitioning procedure whether
a comparison with the smaller pivot occurs as well. 

First, in Section~\ref{sec:solve-recurrence},
we make the set-up precise, fix notions, and start solving the
dual-pivot quicksort recurrence~\eqref{eq:recurrence}. This recurrence
relates the cost of the partitioning step to the total number of
comparisons of dual-pivot quicksort. These results are independent of the
partition strategy.

In Section~\ref{sec:part-costs} the partitioning strategy ``Count'' is
introduced and its cost is analyzed. 
It will turn out that the results on lattice paths obtained in Part~\partref{sec:lattice-paths} are central in determining the partitioning cost.

Everything is put together in Section~\ref{sec:costs-main-asy}:
We obtain the exact comparison count 
(Theorem~\ref{thm:count:cost}). 
The asymptotic behavior is extracted out of the exact results
(Corollary~\ref{cor:count:cost:asy}).

\section{Solution of the Dual-Pivot Quicksort Recurrence}
\label{sec:solve-recurrence}

We consider versions of dual-pivot quicksort that act as follows
on an input sequence $(a_1,\ldots,a_n)$ consisting of distinct numbers:
If $n\le1$, do nothing, otherwise choose $a_1$ and $a_n$ as pivots,
and by one comparison determine $p=\min(a_1,a_n)$ and $q=\max(a_1,a_n)$.
Use a partitioning procedure to partition the remaining
$n-2$ elements into the three classes \emph{small}, \emph{medium}, and
\emph{large}. Then call dual-pivot quicksort recursively on each of these three classes
to finish the sorting, using the same partitioning procedure in all recursive calls.  

We denote the number of small, medium and large elements defined by
the pivot elements by $S$, $M$ and $L$ respectively.

Of course, the random variables $S$, $M$ and $L$ depend on $n$. For simplicity and readability,
this is not reflected in the notation. However, our discussion heavily uses
generating functions, where we need to
make the dependence on $n$ explicit; so we will write $S_n$, $M_n$ and $L_n$ in
this context.

We will need the following results on the distribution of $(S, M, L)$.

\begin{lemma}\label{lemma:S-M-L-distribution} The triple $(S, M, L)$ is uniformly distributed on 
  \begin{equation}\label{eq:def-Omega}
    \Omega'\coloneqq\{(s, m, \ell)\mid \text{$s+m+\ell=n-2$, $s, m, \ell\in\N_0$}\}.
  \end{equation}
  The random variables $S$, $M$ and $L$ are identically distributed and we have
  \begin{align}
    \P{M=m}&=\frac{n-m-1}{\binom{n}{2}}\quad\text{for $0\le m\le n-2$},\label{eq:distribution-M}\\
    \E{M}&=\frac{n-2}{3}, \label{eq:expectation-M}\\
    \sum_{n\ge 2}\E{\abs{L_n-S_n}}z^n &=
    \frac{1}{3  {(1-z)}^{2}}
    - \frac{1}{2  {(1- z)}}
    - \frac{1}{2} {(z + 1)} \artanh(z)
    + \frac{1}{6}
    + \frac{1}{3}  z
\label{eq:expectation-difference}.
  \end{align}
\end{lemma}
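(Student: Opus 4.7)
The plan is to reduce every assertion to a single claim---the uniformity of $(S,M,L)$ on $\Omega'$---and then carry out a generating function computation for the last formula.

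First, the uniformity. The input being a uniformly random permutation of $n$ distinct elements, the unordered pair $\{\operatorname{rank}(a_1),\operatorname{rank}(a_n)\}$ is uniformly distributed over the $\binom{n}{2}$ two-element subsets of $\{1,\ldots,n\}$. Since $p$ and $q$ are the minimum and maximum of the two pivots, a pair of ranks $i<j$ yields exactly $S=i-1$, $M=j-i-1$, $L=n-j$, and the assignment $\{i,j\}\mapsto(i-1,j-i-1,n-j)$ is a bijection onto $\Omega'$. Both sets have size $\binom{n}{2}$, so $(S,M,L)$ is uniformly distributed on $\Omega'$. Identical distribution of $S$, $M$, $L$ then follows from the coordinate symmetry of $\Omega'$; the formula $\P{M=m}=(n-m-1)/\binom{n}{2}$ follows by counting the $n-m-1$ pairs $(s,\ell)$ with $s+\ell=n-2-m$ and dividing by $\abs{\Omega'}$; and $\E{M}=(n-2)/3$ follows by linearity from $\E{S+M+L}=n-2$ together with equidistribution.

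For the generating function identity I would first compute $f_n\coloneqq\sum_{(s,m,\ell)\in\Omega'}\abs{s-\ell}$ by a trivariate generating function: the $m$-coordinate contributes the factor $1/(1-z)$; the pair $(s,\ell)$ contributes $\sum_{s,\ell\ge 0}\abs{s-\ell}z^{s+\ell}$, which after splitting on the sign of $s-\ell$ evaluates to $2z/((1-z)^3(1+z))$; and an extra $z^2$ accounts for the two pivots. This gives $F(z)\coloneqq\sum_n f_n z^n = 2z^3/((1-z)^4(1+z))$. Because uniformity on $\Omega'$ gives $\E{\abs{L_n-S_n}}=2f_n/(n(n-1))$, the target generating function $G(z)$ on the right-hand side of \eqref{eq:expectation-difference} satisfies $G''(z)=2F(z)/z^2=4z/((1-z)^4(1+z))$. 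I would then perform a partial fraction decomposition of $G''$ in the basis $(1-z)^{-k}$ and $(1+z)^{-1}$ and integrate twice, imposing $G(0)=G'(0)=0$ to fix the constants of integration.

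The main technical burden lies in this last step. The two logarithmic contributions arising from $(1-z)^{-1}$ and $(1+z)^{-1}$ after the first integration have to be combined via $\artanh(z)=\tfrac12\log((1+z)/(1-z))$, and after the second integration (which produces an extra factor of $z$ through $\int \artanh$) they must merge into the single term $-\tfrac12(z+1)\artanh(z)$ in \eqref{eq:expectation-difference}; the remaining rational pieces must collapse to $1/(3(1-z)^2)-1/(2(1-z))$; and the integration constants must add up to the polynomial part $1/6+z/3$. Everything else is routine algebra and series bookkeeping.
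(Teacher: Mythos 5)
Your proposal is correct and follows essentially the same route as the paper: the uniformity of $(S,M,L)$ via the bijection with pivot pairs, the symmetry/counting arguments for \eqref{eq:distribution-M} and \eqref{eq:expectation-M}, and for \eqref{eq:expectation-difference} a generating function for $\sum_{(s,m,\ell)\in\Omega'}\abs{\ell-s}$ followed by the differentiate-twice/integrate-twice device (the paper merely marks $\abs{\ell-s}$ with an auxiliary variable $u$ and differentiates at $u=1$ instead of summing $\abs{s-\ell}z^{s+\ell}$ directly). Incidentally, your kernel $G''(z)=4z/\bigl((1-z)^4(1+z)\bigr)$ is the correct one; the paper's displayed $4z/\bigl((1+z)(1-z^4)\bigr)$ is a typo for $4z/\bigl((1+z)(1-z)^4\bigr)$, and with your expression the double integration with $G(0)=G'(0)=0$ indeed reproduces \eqref{eq:expectation-difference}.
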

\begin{proof}
  Each pair of pivot elements is equally likely and there is an obvious bijection
  between the pairs of pivot elements and the triples $(S, M,
  L)\in\Omega'$. Thus $(S, M, L)$ is uniformly distributed on $\Omega'$. This
  (or direct enumeration of $\Omega'$) implies that
  $\Omega'$ has cardinality $\binom{n}{2}$.

  By the symmetry in the definition of $\Omega'$, it is clear that $S$, $M$ and
  $L$ are identically distributed. Thus $3\E{M}=\E{S}+\E{M}+\E{L}=\E{n-2}=n-2$,
  which implies~\eqref{eq:expectation-M}.
  For $0\le m\le n-2$, there are $n-m-1$ choices for $(s, \ell)$ such that $(s,
  m,\ell)\in\Omega'$. Together with $\abs{\Omega'}=\binom{n}{2}$, this implies
  \eqref{eq:distribution-M}.

  Let $g_{d,n}$ be the number of triples $(s, m, \ell)\in\Omega'$ with
  $\abs{\ell-s}=d$. We can write the bivariate generating function of $g_{d,n}$
  as
  \begin{equation*}
    G(u, z)\coloneqq\sum_{\substack{n\ge 0\\d\ge 0}}g_{d,n}u^dz^n=\frac{1}{1-z^2}\frac{1}{1-z}\Bigl(1+\frac{2uz}{1-uz}\Bigr).
  \end{equation*}
  To see this, first only consider the cases that $\ell>s$. Write
  $s+m+\ell=s+m+(s+(\ell-s))=2s+m+(\ell-s)$. As $\ell-s$ is marked by the
  variable $u$, this triple $(s, m, \ell)$ contributes $(z^2)^s z^m
  (zu)^{\ell-s}$. Summing over all these triples yields $1/(1-z^2)\cdot
  1/(1-z)\cdot uz/(1-uz)$. The triples with $\ell<s$ contribute the same, thus the
  factor $2$ in the result. The triples with $\ell=s$ contribute $1/(1-z^2)\cdot 1/(1-z)$.

  Therefore,
  \begin{equation}\label{eq:expectation-L-S-extract}
    \E{\abs{L_n-S_n}}=\frac1{\binom{n}{2}}[z^{n-2}]\frac{\partial G(u, z)}{\partial
                   u}\Bigr|_{u=1}=
                   \frac2{n(n-1)} [z^{n-2}]\frac{2z}{(1+z)(1-z^4)}.
  \end{equation}
  We have to compute the generating function
  \begin{equation*}
    H(z)\coloneqq \sum_{n\ge 2} \E{\abs{L_n-S_n}}z^n.
  \end{equation*}
  Differentiating twice yields and using~\eqref{eq:expectation-L-S-extract} yields
  \begin{equation*}
    H''(z) = \sum_{n\ge 2}n(n-1)\E{\abs{L_n-S_n}}z^{n-2} = \frac{4z}{(1+z)(1-z^4)}.
  \end{equation*}
  Integrating twice yields~\eqref{eq:expectation-difference}.
\end{proof}

Let $P_n$, a random 
variable, denote the \emph{partitioning cost}. This is 
defined as the number of comparisons made by the partitioning procedure
if the input $(a_1,\ldots,a_n)$ is assumed to be in random order.
Further, let $C_n$ be the random variable
that denotes the number of comparisons carried out when sorting $n$ elements
with dual-pivot quicksort.
The reader should be aware that both $P_n$ and $C_n$ are determined by the partitioning
procedure used.  

Since the input $(a_1,\ldots,a_n)$ is in random order
and the partitioning procedure does nothing but compare elements with the 
two pivots, the inputs for the recursive calls are in random order as well,
which implies that the distributions of $P_n$ and $C_n$
only depend on $n$.

The recurrence
\begin{equation}
  \E{C_n} =
     \E{P_n}  + \frac{3}{\binom{n}{2}}
      \sum_{k = 1}^{n - 2} (n - 1 - k) \E{C_{k}}
\label{eq:recurrence}
\end{equation}
for $n\ge0$
describes the connection between
the expected sorting cost $\E{C_n}$ and the expected partitioning cost $\E{P_n}$.
It will be central for our analysis. 
Note that it is irrelevant for \eqref{eq:recurrence} how the partitioning cost $\E{P_n}$ is determined;
it need not even be referring to comparisons. 
The recurrence is simple and well-known; a version of it occurs already in 
Sedgewick's thesis~\cite{Sedgewick:1975:thesis}.
For the convenience of the reader we give a brief proof: We clearly
have
  $C_n = P_n + C_S + C_M + C_L$.
Taking expectations and using the fact that $S$, $M$, $L$ and therefore $C_S$,
$C_M$, $C_L$ are identically distributed as well as the law of total expectation yields
\begin{align*}
  \E{C_n} &= \E{P_n} + \E{C_S} + \E{C_M} + \E{C_L}
  = \E{P_n} + 3 \E{C_M}\\&= \E{P_n} + 3\sum_{k=0}^{n-2}\P{M=k}\E{C_k}.
\end{align*}
Using \eqref{eq:distribution-M} leads to \eqref{eq:recurrence}. 

We recall how to solve recurrence~\eqref{eq:recurrence} using
generating functions. We follow 
Wild~\cite[\S~4.2.2]{Wild2013} who in turn follows
Hennequin~\cite{Hennequin:1991:analy}. The following lemma is contained in
slightly different notation in \cite{Wild2013}.

\begin{lemma}\label{le:integration}With the cost $C_n$ and $P_n$
  as above,
  $C(z)=\sum_{n\ge 0}\E{C_n}z^n$ and $P(z)=\sum_{n\ge 0}\E{P_n}z^n$, we have
  \begin{equation*}
    C(z)=(1-z)^3\int_{0}^z (1-t)^{-6}\int_{0}^t (1-s)^3P''(s)\,ds\,dt.
  \end{equation*}
\end{lemma}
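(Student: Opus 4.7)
The plan is to convert the recurrence~\eqref{eq:recurrence} into a linear ODE for $C(z)$ and then solve that ODE with the obvious initial conditions $\E{C_0}=\E{C_1}=0$. First I would multiply both sides of \eqref{eq:recurrence} by $\binom{n}{2}$, which clears the denominator on the right and is convenient because $\binom{n}{2}=n(n-1)/2$ is exactly the operator obtained when $z^2 D^2$ acts on $z^n$. Multiplying by $z^n$ and summing over $n\ge 2$ (both sides vanish for $n\le 1$), the left-hand side becomes $\tfrac{z^2}{2}C''(z)$, the first right-hand term becomes $\tfrac{z^2}{2}P''(z)$, and the convolution is recognised as a Cauchy product. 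For the convolution, the substitution $j=n-1-k$ factors the double sum into $C(z)\cdot\sum_{j\ge 1}jz^{j}\cdot z = z^2 C(z)/(1-z)^2$. Cancelling the common factor $z^2/2$ gives the ODE
\begin{equation*}
C''(z) - \frac{6}{(1-z)^2}\,C(z) = P''(z),
\end{equation*}
with $C(0)=C'(0)=0$.

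To solve it, I would observe that the homogeneous equation is of Euler type in the variable $1-z$: the ansatz $(1-z)^{\alpha}$ gives the indicial equation $\alpha(\alpha-1)=6$ with roots $\alpha\in\{3,-2\}$, so the fundamental solutions are $(1-z)^3$ and $(1-z)^{-2}$. Motivated by the shape of the target formula, I would then substitute $C(z)=(1-z)^3 f(z)$. A short direct computation shows that the zero-order part in $f$ cancels (which reflects the fact that $(1-z)^3$ solves the homogeneous equation), leaving
\begin{equation*}
(1-z)^3 f''(z) - 6(1-z)^2 f'(z) = P''(z).
\end{equation*}
Multiplying by $(1-z)^3$ recognises the left-hand side as an exact derivative,
\begin{equation*}
\frac{d}{dz}\bigl[(1-z)^6 f'(z)\bigr] = (1-z)^3 P''(z).
\end{equation*}

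The initial conditions $C(0)=C'(0)=0$ translate (via $f(0)=C(0)$ and $f'(0)=C'(0)+3f(0)$) to $f(0)=f'(0)=0$. Integrating the displayed equation from $0$ to $t$ yields $f'(t)=(1-t)^{-6}\int_0^t (1-s)^3 P''(s)\,ds$, and integrating once more from $0$ to $z$ followed by multiplication by $(1-z)^3$ produces the claimed double-integral formula. The only delicate steps are the bookkeeping for the convolution sum (getting the correct power of $z$ and range of summation) and verifying that the boundary conditions on $f$ really are homogeneous; both are routine given that $\E{C_0}=\E{C_1}=0$, so I do not expect a genuine obstacle.
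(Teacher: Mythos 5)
Your proposal is correct and follows essentially the same route as the paper: both derive the identical ODE $(1-z)^2C''(z)-6C(z)=(1-z)^2P''(z)$ from the recurrence and integrate it twice using $C(0)=C'(0)=0$. The only cosmetic difference is that you reduce the order via the ansatz $C(z)=(1-z)^3f(z)$, whereas the paper factors the Euler-type operator as $(\theta-2)(\theta+3)$ with $(\theta f)(z)=(1-z)f'(z)$; the resulting exact first-order equations and double integral are the same.
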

For self-containedness, the proof is in Appendix~\ref{sec:proof-lemma-integration}.

\section{Partitioning Cost}\label{sec:part-costs}
In Section~\ref{sec:solve-recurrence} we saw that
in order to calculate the average number of
comparisons~$\E{C_n}$ of a dual-pivot quicksort algorithm
we need the expected partitioning cost~$\E{P_n}$
of the partitioning procedure used. 
The aim of this section is to determine $\E{P_n}$ 
for the partitioning procedure ``Count''
to be described below.

We use the set-up described at the beginning of
Section~\ref{sec:solve-recurrence}.  
For partitioning we use comparisons to \emph{classify}
the $n-2$ elements $a_2$, \dots, $a_{n-1}$ as \emph{small},
\emph{medium}, or \emph{large}. We will be using the term \emph{classification} 
for this central aspect of partitioning. Details of a partitioning 
procedure that concern how the classes are
represented or elements are moved around may and will be ignored. (Nonetheless,
in Appendix~\ref{sec:pseudocode}
we provide pseudocode for the considered classification strategies turned into 
dual-pivot quicksort algorithms.)
The cost $P_n$ depends on the concrete classification strategy used, 
the only relevant difference between
classification strategies being whether 
the next element to be classified is compared with
the smaller pivot~$p$ or the
larger pivot~$q$ first. This decision may
depend on the whole history of outcomes of previous comparisons.  (The
resulting abstract classification strategies may conveniently be
described as classification trees, see~\cite{AumullerD15}, but we do
not need this model here. See also Section~\ref{sec:part-strategies-opt}.)

Two comparisons are necessary for each medium element.
Furthermore, one comparison with $p$ is necessary for small and one
comparison with $q$ is necessary for large elements.
We call other comparisons occurring during classification \emph{additional comparisons}.
That means, an additional comparison arises when a small element is compared with $q$ first or a large element
is compared with $p$ first.

Next we describe the classification strategy ``Count''
from~\cite{AumullerD15}.
Let $s_t$ and $\ell_t$ denote the number of elements that have been
classified as small and large, respectively, in the first $t$ classification rounds. Set $s_0 = \ell_0 = 0$.

\begin{strategy}[``Count'']
  When classifying the $(t+1)$-st element, for $0\le t < n-2$, proceed as
  follows: If $s_t \geq \ell_t$, compare with $p$
  first, otherwise compare with $q$ first.
\end{strategy}

\begin{remark}
We argue informally that strategy ``Count'' is quite natural, 
referring to a standard method for parameter estimation from statistics. 
It is well known (and not hard to see) that,
as long as only comparisons are used, the following method for generating sequences 
to be sorted is equivalent to our probability model: 
Choose $a_1$, \dots, $a_n$ independently and
uniformly at random from $[0,1]$ (see, for example, \cite{Wild2013}).
One can imagine that first the pivots $\{a_1,a_n\}=\{p,q\}$ with $p<q$ 
are chosen uniformly at random from $[0,1]$. Elements $a_2$, \dots, $a_{n-1}$, chosen in the same way,
are classified in rounds $1$, \dots, $n-2$.
For $t<n-2$, the empirical frequencies $\frac{s_t}{s_t+\ell_t}$ and $\frac{\ell_t}{s_t+\ell_t}$ 
are the maximum-likelihood estimators for $p/(p+1-q)$ and $(1-q)/(p+1-q)$, 
which are the probabilities for the $(t+1)$-st element being small respectively large,
conditioned on it not being medium. 
So it is natural to bet that the next element to be considered will be small 
if and only if $\frac{s_t}{s_t + \ell_t} \ge\frac{\ell_t}{s_t + \ell_t}$.
This consideration even applies if $p$ and $q$ are fixed (and not chosen at random). 
Our analysis in Part~\partref{sec:count-is-optimal} 
shows that, when averaging over randomly chosen $p$ and $q$, 
strategy ``Count'' gives the smallest probability to pick the ``wrong'' pivot
among all strategies.
\end{remark}

In Appendix~\ref{sec:clairvoyant} we describe a closely related
strategy named ``Clairvoyant'', which assumes an oracle is given and requires slightly
fewer comparisons than ``Count''. In \cite{AumullerD15},
``Clairvoyant'' was used as an auxiliary means for analyzing ``Count''.

The goal of this section is to prove the following proposition.

\begin{proposition}\label{proposition:expected-partitioning-costs}
  Let $n\ge 2$. The expected partitioning cost of strategy ``Count'' is
    \begin{align}\label{eq:expected-partitioning-costs}
      \E{P_n} = \frac32n + \frac12\Hodd_n - \frac{19}{8}-\frac{3\iverson*{$n$ odd}}{8n}-\frac{\iverson*{$n$ even}}{8(n-1)}.
    \end{align}
  The corresponding generating function is
  \begin{multline}\label{eq:g-f-expected-partitioning-costs}
    P(z)=\sum_{n\ge 2} \E{P_n} z^n
    = \frac{3}{2(1-z)^2} + \frac{\artanh(z)}{2  {(1-z)}} \\
    -\frac{31z^2}{8(1-z)}-\frac{3+z}{8}\artanh(z) - \frac{3}{2}-\frac{25z}{8}.
  \end{multline}
  Asymptotically, we have
  \begin{equation*}
    \E{P_n} = \frac32 n + \frac14 \log n + \frac14\gamma + \frac{\log2}4 -
    \frac{19}{8} - \frac1{8n} - \frac1{12n^2}  + O\Bigl(\frac1{n^{3}}\Bigr)
  \end{equation*}
  as $n$ tends to infinity. 
\end{proposition}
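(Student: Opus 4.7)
I would first decompose $P_n$ into its constituent comparisons. One comparison is spent deciding which of $a_1,a_n$ is $p$ and which is $q$; every medium element forces exactly two comparisons (one with each pivot); every small (resp.\ large) element contributes one unavoidable comparison with $p$ (resp.\ $q$) and an extra one precisely when strategy ``Count'' starts with the wrong pivot. Letting $A$ count these extra ``additional'' comparisons, we obtain
\begin{equation*}
  P_n = (n-1) + M + A,\qquad\text{hence}\qquad \E{P_n} = n-1 + \tfrac{n-2}{3} + \E{A},
\end{equation*}
where $\E{M}=(n-2)/3$ comes from Lemma~\ref{lemma:S-M-L-distribution}. The whole task reduces to computing $\E{A}$.

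Next I would connect $A$ to the lattice paths of Part~\partref{sec:lattice-paths}. Extract the subsequence of small/large classifications and form a $\pm 1$ lattice path $\tilde W$ of length $S+L$ (up for small, down for large). Its height $Y_t$ equals $s_t-\ell_t$ at the corresponding rounds, so Count chooses the wrong pivot first at step $t+1$ precisely when $Y_t\ge 0$ and the step is down or $Y_t<0$ and the step is up. A short case check of $|Y_{t+1}|-|Y_t|$ shows that every right step adds $1$ to $|Y|$, while a wrong step subtracts $1$ unless it starts at $Y_t=0$, in which case it also adds $1$. Letting $D_0$ denote the number of down-from-zero steps of $\tilde W$, telescoping yields $|S-L|=(S+L)-2A+2D_0$, i.e.,
\begin{equation*}
  A = \min(S,L)+D_0.
\end{equation*}
Conditioning on $S+L=k$, Lemma~\ref{lemma:S-M-L-distribution} gives $(S,L)$ uniform on $\{(s,\ell):s+\ell=k\}$ and then $\tilde W$ uniform among all paths with the induced endpoint; thus $\tilde W$ is distributed exactly as the length-$k$ path $W_k$ from Section~\ref{sec:description}. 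By symmetry of that model and Corollary~\ref{cor:exp-up-from-zero}, $\E{D_0\mid S+L=k}=\tfrac12\Hodd_k$. Averaging over $k$ via $\P{S+L=k}=(k+1)/\binom{n}{2}$ and exchanging the two sums in $\sum_{k=0}^{n-2}(k+1)\Hodd_k$ produces a closed form for $\E{D_0}$; combining it with $\E{\min(S,L)}=(n-2)/3-\tfrac12\E{|L-S|}$ read off from \eqref{eq:expectation-difference}, and treating the cases $n$ odd and $n$ even separately (this is where the $\Hodd_{n-2}\to\Hodd_n$ conversion introduces parity corrections), yields the exact formula~\eqref{eq:expected-partitioning-costs} with the Iverson terms $-3\iverson*{$n$ odd}/(8n)$ and $-\iverson*{$n$ even}/(8(n-1))$.

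Finally, the generating function~\eqref{eq:g-f-expected-partitioning-costs} follows by summing \eqref{eq:expected-partitioning-costs} term by term, using $\sum_n n z^n = z/(1-z)^2$, $\sum_n \Hodd_n z^n = \artanh(z)/(1-z)$, $\sum_{n}(\iverson*{$n$ odd}/n)z^n = \artanh(z)$, and rewriting $-\,\text{const}/(1-z)$ in the form $-\text{const}\cdot z^2/(1-z) + \text{polynomial}$ as dictated by the target shape; the polynomial contributions then collect to $-3/2-25z/8$. The asymptotic expansion drops out of Lemma~\ref{lem:harmonic-asy} applied to $\Hodd_n$. The conceptual crux is the telescoping identity $A=\min(S,L)+D_0$, which is precisely what reduces the analysis to the lattice path statistics of Part~\partref{sec:lattice-paths}; the main technical obstacle is the parity-sensitive accounting necessary to recover the exact constant $-19/8$ and the two Iverson-indexed lower-order corrections.
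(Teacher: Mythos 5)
Your proposal is correct and follows essentially the same route as the paper: rewriting $\min(S,L)=\tfrac12\bigl((n-2-M)-|L-S|\bigr)$ shows that your telescoping identity $A=\min(S,L)+D_0$ is exactly the paper's cost decomposition $P_n=1+\tfrac32(n-2)+\tfrac12 M+Z^{\nearrow}_{n-2-M}-\tfrac12|L-S|$ (your down-from-zero steps are the paper's up-from-zero situations under the mirrored up/down convention), and the reduction to the path model of Part~\partref{sec:lattice-paths} together with Corollary~\ref{cor:exp-up-from-zero} is the same. The only difference is bookkeeping: the paper averages over $M$ via generating functions (Lemma~\ref{lemma:expectation-Z-n-2-M}) and then extracts the exact and asymptotic formulas, whereas you propose direct summation with parity accounting; both are routine.
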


We now translate the elements of the random permutation $(a_1,\ldots, a_n)$ that we are reading into a random lattice path $W$
 starting at the origin and using \emph{up-steps} $(1, +1)$ when a large element is
encountered, \emph{right-steps} $(1, 0)$ when a medium element is encountered and
\emph{down-steps} $(1, -1)$ when a
small element is encountered. Removing all $M$ right-steps from $W$ leads to a path $W'$ of length $n-2-M$
which only has up-steps and down-steps; see Figure~\ref{fig:qs:lattice:walk:ex}.

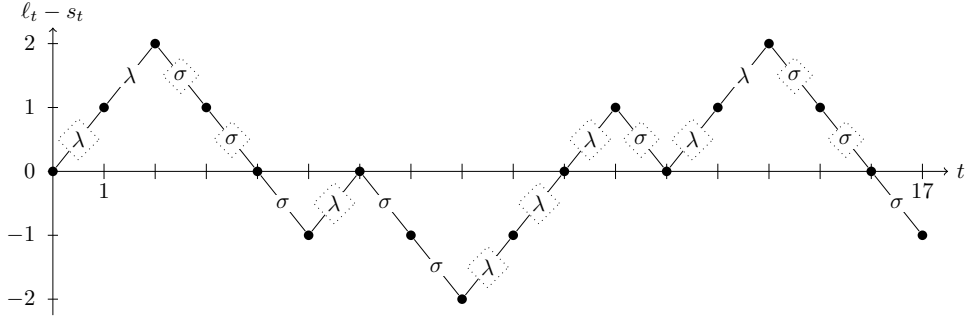
\begin{figure}
\hspace*{-1em}
\scalebox{0.85}{
		 \begin{tikzpicture}[yscale=1,xscale=0.8]
    \draw[->] (0, 0.75) to (0, 5.25);
    \draw[->] (0, 3) to (17.5, 3);
    \node at (1, 2.7) {$1$};
    \node at (17, 2.7) {$17$};
    \node (l2) at (17.75, 3) {$t$};
    \node (l1) at (-0, 5.5) {$\ell_t - s_t$};

    \foreach \x in {1, 2, 3, 4, 5, 6, 7, 8, 9, 10, 11, 12, 13, 14, 15, 16, 17}
        \draw (\x, 2.88) to (\x, 3.12);

    \foreach \x/\label in {1/-2, 2/-1, 3/\phantom{-}0, 4/\phantom{-}1, 5/\phantom{-}2}
    	\draw (-0.1, \x) to node[xshift=-0.5cm] {$\label$} (0.1, \x);
    \node[yshift=3cm, circle, fill, inner sep = 0, minimum size = 0.15cm] (p0) at (0,0) {};
    \node[yshift=3cm, circle, fill, inner sep = 0, minimum size = 0.15cm] (p1) at (1,1) {};
    \node[yshift=3cm, circle, fill, inner sep = 0, minimum size = 0.15cm] (p2) at (2,2) {};
    \node[yshift=3cm, circle, fill, inner sep = 0, minimum size = 0.15cm] (p3) at (3,1) {};
    \node[yshift=3cm, circle, fill, inner sep = 0, minimum size = 0.15cm] (p4) at (4,0) {};
    \node[yshift=3cm, circle, fill, inner sep = 0, minimum size = 0.15cm] (p5) at (5,-1) {};
    \node[yshift=3cm, circle, fill, inner sep = 0, minimum size = 0.15cm] (p6) at (6, 0) {};
    \node[yshift=3cm, circle, fill, inner sep = 0, minimum size = 0.15cm] (p7) at (7,-1) {};
    \node[yshift=3cm, circle, fill, inner sep = 0, minimum size = 0.15cm] (p8) at (8,-2) {};
    \node[yshift=3cm, circle, fill, inner sep = 0, minimum size = 0.15cm] (p9) at (9,-1) {};
    \node[yshift=3cm, circle, fill, inner sep = 0, minimum size = 0.15cm] (p10) at (10, 0) {};
    \node[yshift=3cm, circle, fill, inner sep = 0, minimum size = 0.15cm] (p11) at (11, 1) {};
    \node[yshift=3cm, circle, fill, inner sep = 0, minimum size = 0.15cm] (p12) at (12, 0) {};
    \node[yshift=3cm, circle, fill, inner sep = 0, minimum size = 0.15cm] (p13) at (13, 1) {};
    \node[yshift=3cm, circle, fill, inner sep = 0, minimum size = 0.15cm] (p14) at (14, 2) {};
    \node[yshift=3cm, circle, fill, inner sep = 0, minimum size = 0.15cm] (p15) at (15, 1) {};
    \node[yshift=3cm, circle, fill, inner sep = 0, minimum size = 0.15cm] (p16) at (16, 0) {};
    \node[yshift=3cm, circle, fill, inner sep = 0, minimum size = 0.15cm] (p17) at (17, -1) {};

    \draw (p0) to node[pos=0.5, draw, diamond, dotted, fill=white, inner sep = 0.05cm] {${\lambda}$} (p1);

    \draw (p1) to node[pos=0.5, fill=white, dotted, inner sep=0.05cm] {$\lambda$}  (p2);
    \draw (p2) to node[pos=0.5, draw, diamond, fill=white, dotted, inner sep=0.05cm] {${\sigma}$}  (p3) ;
    \draw (p3) to node[pos=0.5, draw, diamond, fill=white, dotted, inner sep=0.05cm] {${\sigma}$}  (p4) ;
    \draw (p4) to node[pos=0.5, fill=white, dotted, inner sep=0.05cm] {$\sigma$}  (p5) ;
    \draw (p5) to node[pos=0.5, draw, diamond, fill=white, dotted, inner sep=0.05cm] {${\lambda}$}  (p6) ;
    \draw (p6) to node[pos=0.5, fill=white, dotted, inner sep=0.05cm] {$\sigma$}  (p7) ;
    \draw (p7) to node[pos=0.5, fill=white, dotted, inner sep=0.05cm] {$\sigma$}  (p8) ;
    \draw (p8) to node[pos=0.5, draw, diamond, fill=white, dotted, inner sep=0.05cm] {${\lambda}$}  (p9) ;
    \draw (p9) to node[pos=0.5, draw, diamond, fill=white, dotted, inner sep=0.05cm] {${\lambda}$}  (p10) ;
    \draw (p10) to node[pos=0.5, draw, diamond, fill=white, dotted, inner sep=0.05cm] {${\lambda}$}  (p11) ;
    \draw (p11) to node[pos=0.5, draw, diamond, fill=white, dotted, inner sep=0.05cm] {${\sigma}$}  (p12) ;
    \draw (p12) to node[pos=0.5,  draw, diamond, fill=white, dotted, inner sep=0.05cm] {${{\lambda}}$}  (p13) ;
    \draw (p13) to node[pos=0.5, fill=white,  inner sep=0.05cm] {$\lambda$}  (p14) ;
    \draw (p14) to node[pos=0.5, draw, diamond, fill=white, dotted, inner sep=0.05cm] {${\sigma}$}  (p15) ;
    \draw (p15) to node[pos=0.5, draw, diamond, fill=white, dotted, inner sep=0.05cm] {${\sigma}$}  (p16) ;
    \draw (p16) to node[pos=0.5, fill=white, dotted, inner sep=0.05cm] {${\sigma}$}  (p17);
\end{tikzpicture}}

    \caption{Example run of strategy ``Count'' on input $b=(\lambda$, $\lambda$, $\sigma$, $\sigma$, $\sigma$, $\lambda$, $\sigma$, $\sigma$, $\lambda$, $\lambda$, $\lambda$, $\sigma$, $\lambda$, $\lambda$, $\sigma$, $\sigma$, $\sigma)$. The $s=9$ small elements 
		in $b$ are represented 
		by the symbol $\sigma$, the $\ell=8$ large elements by $\lambda$ (no medium elements $\mu$ present). 
		The horizontal axis shows the input position $t$, the vertical axis shows the difference $\ell_t-s_t$ of large and small elements in the initial segment $b_{\le t}=(b_1,\ldots,b_t)$ of the input. 
  A diamond marks elements where an additional comparison occurs using strategy ``Count''.
  It makes eleven additional comparisons on this input.}
\label{fig:qs:lattice:walk:ex}
\end{figure}

It now turns out that $W'$ is a path which is distributed exactly as the random
paths studied in Part~\partref{sec:lattice-paths}.

\begin{lemma}Let $0\le m\le n-2$. We have
  \begin{equation}\label{eq:equidistribution-ending-points-w-prime}
    \P{W'\text{ ends at }(n-m-2, d)\mid M=m} = \frac{1}{n-m-1}
  \end{equation}
  for all $d\equiv n-m-2\pmod 2$ and $\abs{d}\le n-m-2$.

  For such $m$ and $d$, all paths $W'$ ending in $(n-m-2, d)$ are equally likely.
\end{lemma}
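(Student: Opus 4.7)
The plan is to address the two assertions of the lemma separately, both as direct consequences of the symmetry properties of uniformly random permutations, building on Lemma~\ref{lemma:S-M-L-distribution}.

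For the equidistribution of endpoints, I would invoke Lemma~\ref{lemma:S-M-L-distribution}, which states that $(S, M, L)$ is uniformly distributed on $\Omega'$. Conditioning on $M = m$ restricts the conditional distribution of $(S, L)$ to the uniform law on the slice $\{(s, \ell) \in \N_0^2 : s + \ell = n - m - 2\}$, which consists of exactly $n - m - 1$ points. Since by construction $W'$ ends at the point $(n - m - 2, L - S)$, and since the map $(s, \ell) \mapsto \ell - s$ is a bijection between this slice and the set of admissible $d$-values (integers with the correct parity in $\{-(n-m-2), \dots, n-m-2\}$), each such $d$ is attained with conditional probability $1/(n-m-1)$, yielding \eqref{eq:equidistribution-ending-points-w-prime}.

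For the uniformity of paths with a given endpoint, I would use the exchangeability of the non-pivot entries $a_2, \dots, a_{n-1}$ in the underlying random input. Conditional on $(S, M, L) = (s, m, \ell)$, the sequence of \emph{types} (small, medium, large) attached to these positions is uniformly distributed over the $\binom{n-2}{s, m, \ell}$ possible arrangements. Deleting the medium entries yields a uniform distribution on the $\binom{n-m-2}{s}$ arrangements of $s$ small and $\ell$ large symbols, and these arrangements are in natural bijection with lattice paths of length $n-m-2$ ending at $(n-m-2, \ell-s) = (n-m-2, d)$, so all such paths are equally likely.

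I expect no genuine obstacle here: both assertions reduce to standard symmetry and marginalization arguments that are already implicit in the proof of Lemma~\ref{lemma:S-M-L-distribution}. The one point that requires some care is the verification that the marginal distribution of the small/large sub-sequence is unaffected by the (random) positions of the deleted medium entries; this follows at once from the exchangeability of the non-pivot inputs, since the random set of medium positions is independent of the small/large type sequence read off the remaining positions.
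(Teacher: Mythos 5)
Your proposal is correct and follows essentially the same route as the paper's proof: the endpoint equidistribution comes from the uniformity of $(S,M,L)$ on $\Omega'$ conditioned on $M=m$, and the conditional uniformity of $W'$ given its endpoint comes from the uniformity of the type sequence given $(S,M,L)$ together with the observation that deleting the medium symbols maps equally many full sequences to each shortened path (your exchangeability/independence phrasing is just another way of stating the paper's $\binom{n-2}{m}$-to-one correspondence).
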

\begin{proof}
 As there are $L$ up-steps and $S$ down-steps, the lattice path $W$ ends at $(n-2,
 L-S)$. Thus $W'$ ends on $(n-2-M, L-S)$.

 Once $S=s$, $M=m$ and $L=\ell$ are fixed, there are ${\binom{n-2}{s, m, \ell}}$
 paths with that numbers of up-steps, right-steps and down-steps,
 respectively. All of them are equally likely because they only differ by the
 order of the steps and all orders are equally likely by our probability model
 on the input list. Thus,
 \begin{equation*}
   \P{W=w \mid (S, M, L)=(s,m,\ell)} = \frac1{\binom{n-2}{s, m, \ell}}
 \end{equation*}
 for all paths $w$ with $\ell$ up-steps, $m$ right-steps and $s$ down-steps.

 We have
 \begin{equation*}
   \P{W'\text{ ends at }(n-m-2, d)\mid M=m}=\frac{\P{\text{$L-S=d$ and $L+S=n-m-2$}}}{\P{M=m}}.
 \end{equation*}
 The expression in the numerator uniquely determines the pair $(S, L)$. As each
 of these pairs with $S+m+L=n-2$ is equally likely, the numerator cannot
 depend on $d$ if it is positive at all. It is easily seen that the probability
 is positive if and only if $d\equiv n-m-2\pmod 2$ and $\abs{d}\le n-m-2$. There
 are $n-m-1$ choices for $d$, thus we
 obtain~\eqref{eq:equidistribution-ending-points-w-prime}.

 Each lattice path $w'$ of length $n-m-2$
 corresponds to $\binom{n-2}{m}$ paths $w$ with up-, right- and down-steps of
 length $n-2$. Thus all paths $w'$ ending at $(n-m-2, d)$ are equally likely.
\end{proof}

We are now able to describe the partitioning cost.
\begin{lemma}
  For $n\ge 2$, the partitioning cost $P_n$ is
  \begin{equation}\label{eq:P_n}
    P_n=1+\frac{3}2 (n-2) + \frac{1}{2} M + Z_{n-2-M}^{\nearrow}-\frac12\abs{L-S}
  \end{equation}
  where $Z_{n-2-M}^{\nearrow}$ denotes the number of up-from-zero situations (cf.\ Section~\ref{sec:prob}) of
  the random path $W'$.   
\end{lemma}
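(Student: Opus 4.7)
My plan is to decompose the partitioning cost into its atomic contributions and then reorganize them in terms of the lattice path~$W'$.

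First, the initial comparison between $a_1$ and $a_n$ contributes $1$. Each medium element requires exactly two comparisons (one with $p$ and one with $q$) regardless of the strategy, contributing $2M$. Each small and each large element requires at least one necessary comparison (with $p$ and $q$ respectively), contributing $S+L=n-2-M$. Let $A$ denote the number of \emph{additional} comparisons, i.e., the ``wrong first guesses'' in the sense of the text: the small elements first compared with~$q$ and the large elements first compared with~$p$. So
\begin{equation*}
  P_n = 1 + 2M + (n-2-M) + A = 1 + (n-2) + M + A,
\end{equation*}
and it remains to show $A=\min(S,L)+Z^{\nearrow}_{n-2-M}$, as $\min(S,L)=\tfrac12(S+L-|L-S|)=\tfrac12((n-2-M)-|L-S|)$ will then yield \eqref{eq:P_n} after collecting the $\frac32(n-2)$ and $\frac12 M$ terms.

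Second, I would note that the ``Count'' decision at step $t{+}1$ depends only on the current ordinate $\ell_t - s_t$; medium elements are right-steps in $W$ that leave this ordinate unchanged and are already accounted for. Hence additional comparisons occur exactly at those steps of $W'$ which are \emph{up-steps taken from height $\leq 0$} (a large element when the strategy picks $p$ first) or \emph{down-steps taken from height $>0$} (a small element when the strategy picks $q$ first). Denote these counts by $A_1$ and $A_2$, so $A=A_1+A_2$.

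Third, the crux of the argument is to count $A_1+A_2$ via the excursion decomposition of $W'$ relative to the $x$-axis. I would split the trajectory into maximal \emph{excursions} that leave level~$0$ and (possibly) return, plus the final non-returning segment. A complete positive excursion with $k$ ups and $k$ downs contributes its initial up (an up from $0$) to $A_1$ and all $k$ of its downs to $A_2$, i.e.\ $1+k$. A complete negative excursion with $k$ ups and $k$ downs contributes its $k$ ups (all from heights $\leq -1$) to $A_1$, i.e.\ $k$. The final non-returning part behaves analogously. Pairing ups with downs within each complete excursion, I would then show, by going through the three cases (final segment empty / positive / negative) that
\begin{equation*}
  A_1+A_2 = z^{+} + \bigl(\text{ups in negative portions}\bigr)+\bigl(\text{downs in positive portions}\bigr) = Z^{\nearrow}_{n-2-M} + \min(S,L),
\end{equation*}
where $z^{+}$ counts zeros of $W'$ followed by an up-step, which is exactly $Z^{\nearrow}_{n-2-M}$, and the remaining sum collapses to $\min(S,L)$ once one uses $L_{\mathrm{pos}}=D_{\mathrm{pos}}$ and $L_{\mathrm{neg}}=D_{\mathrm{neg}}$ on complete excursions and tracks the signed contribution $L-S$ of the final segment.

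The bookkeeping in the third step is the main obstacle: one must be careful never to double-count the zeros at the boundary of adjacent excursions and must track separately the cases in which $W'$ ends above, below, or at the $x$-axis, so that the identity $\tfrac12(S+L)\pm\tfrac12(L-S)\in\{S,L\}$ reconciles with $\min(S,L)$. Once this is established, substituting back into $P_n = 1+(n-2)+M+A$ gives exactly \eqref{eq:P_n}.
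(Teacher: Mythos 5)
Your proposal is correct: the decomposition $P_n=1+2M+(n-2-M)+A$ is valid, the identification of the additional comparisons with up-steps of $W'$ from height $\le 0$ and down-steps from height $>0$ is exactly right, and the claimed identity $A=Z^{\nearrow}_{n-2-M}+\min(S,L)$ is true and does yield \eqref{eq:P_n} after the rewriting $\min(S,L)=\tfrac12\bigl((n-2-M)-\abs{L-S}\bigr)$; I checked that your excursion case analysis closes (e.g.\ on the example of Figure~\ref{fig:qs:lattice:walk:ex} it gives $8+3=11$). The route differs from the paper's in how the final count is organized. The paper never isolates $A$ explicitly: it assigns each non-medium step the cost $\tfrac32\pm\tfrac12$ according to whether the step moves away from or towards the horizontal axis, after splitting the cost $2$ of an up-from-zero step as $1+1$ and charging the extra unit to $Z^{\nearrow}_{n-2-M}$; since every remaining step changes the distance to the axis by exactly $\pm1$, the difference (away minus towards) is $\abs{L-S}$ while the sum is $n-2-M$, and the formula drops out with no case distinctions. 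Your version makes the structurally appealing statement $A=Z^{\nearrow}_{n-2-M}+\min(S,L)$ explicit, but pays for it with the excursion bookkeeping (complete positive/negative excursions plus the three cases for the final segment) that you yourself flag as the main obstacle. That bookkeeping can in fact be short-circuited by the paper's observation: the steps you are counting in $A_1+A_2$ other than the up-from-zero ones are precisely the steps that decrease $\abs{\ell_t-s_t}$, all other non-medium steps increase it, and so their number is $\tfrac12\bigl((n-2-M)-\abs{L-S}\bigr)=\min(S,L)$ directly. So both arguments rest on the same two facts (where additional comparisons occur, and the special role of up-from-zero situations); the paper's weighting trick buys brevity, your formulation buys an explicit and reusable expression for the additional-comparison count.
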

\begin{proof}
  The first summand $1$ corresponds to the comparison between the two pivot elements.
  We note that the number of up-from-zero situations of $W$ equals
  $Z_{n-2-M}^{\nearrow}$ by the construction of $W'$, because omitting the
  right-steps clearly does not change the number of up-from-zero situations.

  A right-step (medium element) always has
  partitioning cost of $2=\frac32+\frac12$. We split the cost of $2$ induced by an
  up-from-zero situation (compare with smaller pivot first, but read a large element)
  into $1+1$, the latter being taken into account by
  the summand  $Z_{n-2-M}^{\nearrow}$. Thus, for the remainder of this proof,
  we only have to consider a remaining cost of $1$ for those steps. A step away
  from the axis (correct pivot first) then costs $1=3/2-1/2$,
  a step towards the axis (wrong pivot first) costs $2=3/2+1/2$. The sum of the
  numbers of these steps is
  $n-2-M$, their difference is $\abs{L-S}$. This amounts to a contribution of $\frac32(n-M-2)-\frac12\abs{L-S}$.
\end{proof}

In order to compute the generating function of $\E{P_n}$, we have to compute the generating function of $\E[normal]{Z_{n-2-M_n}^\nearrow}$.

\begin{lemma}\label{lemma:expectation-Z-n-2-M} The generating function of
  $\E[normal]{Z_{n-2-M_n}^\nearrow}$ is
  \begin{equation}\label{eq:G-F-E-Z-n-2-M-n-e}
    \sum_{n\ge 2}\E[normal]{Z_{n-2-M_n}^\nearrow} z^{n} =\frac{\artanh(z)}{2  {(1-z)}}-
    \frac{z^2}{8  {(1-z)}} - \frac{3z + 5}{8}  \artanh(z) + \frac{1}{8}  z.
  \end{equation}
\end{lemma}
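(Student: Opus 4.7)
The plan is to condition on $M_n$, reduce to the known expectation $\E{Z^{\nearrow}_k}=\tfrac12\Hodd_k$ of Corollary~\ref{cor:exp-up-from-zero}, and then do the bookkeeping in terms of generating functions. First, combining the distribution $\P{M_n=m}=(n-m-1)/\binom{n}{2}$ from Lemma~\ref{lemma:S-M-L-distribution} with Corollary~\ref{cor:exp-up-from-zero} and substituting $k=n-2-m$, I obtain
\begin{equation*}
  \E{Z^{\nearrow}_{n-2-M_n}}
  \;=\;\sum_{m=0}^{n-2}\P{M_n=m}\,\E{Z^{\nearrow}_{n-2-m}}
  \;=\;\frac{1}{n(n-1)}\sum_{k=0}^{n-2}(k+1)\,\Hodd_k.
\end{equation*}

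Next, I would pass to generating functions. Since $\artanh z=\sum_{j\text{ odd}} z^j/j$ and partial summation corresponds to multiplication by $1/(1-z)$, the ordinary generating function of $\Hodd_k$ is $H(z)=\artanh(z)/(1-z)$, and differentiation yields
\begin{equation*}
  h(s)\;\coloneqq\;\sum_{k\ge0}(k+1)\Hodd_k s^k\;=\;\bigl(sH(s)\bigr)'\;=\;\frac{\artanh s}{(1-s)^2}+\frac{s}{(1+s)(1-s)^2}.
\end{equation*}
To absorb the $1/(n(n-1))$ factor, I would invoke the identity $z^n/(n(n-1))=\int_0^z\!\int_0^t s^{n-2}\,ds\,dt$, valid for $n\ge 2$. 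Interchanging sum and integral (justified by absolute convergence near the origin) and recognising the inner sum as $h(s)/(1-s)$ gives
\begin{equation*}
  \sum_{n\ge 2}\E{Z^{\nearrow}_{n-2-M_n}}z^n
  \;=\;\int_0^z\!\int_0^t\frac{h(s)}{1-s}\,ds\,dt.
\end{equation*}

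What remains is a direct, if tedious, evaluation of the double integral. For the rational piece $s/[(1+s)(1-s)^3]$, partial fractions based on the decomposition of $1/[(1-s)^3(1+s)]$ (with residues computable at $s=\pm1$) reduces everything to elementary antiderivatives, and the combination of $\log(1-s)$ and $\log(1+s)$ terms reassembles into an $\artanh$ via $\artanh z=\tfrac12\log\tfrac{1+z}{1-z}$. For the $\artanh$ piece $\artanh(s)/(1-s)^3$, I would integrate by parts with $u=\artanh s$ and $dv=ds/(1-s)^3$, reducing to the rational case just treated. Applying the same strategy once more to integrate from $0$ to $z$ (integration by parts for the $\artanh/(1-t)^2$ term and elementary integration for the rest) and gathering terms yields the desired closed form.

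The main obstacle is simply keeping the several rational and $\artanh$ summands separate through two rounds of integration. In particular, one needs to verify the final algebraic identity
$-\tfrac{1}{8(1-z)}+\tfrac{1}{8}+\tfrac{z}{4}=-\tfrac{z^2}{8(1-z)}+\tfrac{z}{8}$ (both equal $\tfrac{z(1-2z)}{8(1-z)}$) in order to match the stated form of~\eqref{eq:G-F-E-Z-n-2-M-n-e}. A small sanity check at the first few coefficients (e.g.\ coefficient $0$ at $z^2$ and $1/3$ at $z^3$) can be used to guard against arithmetic slips.
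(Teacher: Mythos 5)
Your proposal is correct and takes essentially the same route as the paper: the same conditioning on $M_n$ via Lemma~\ref{lemma:S-M-L-distribution} and Corollary~\ref{cor:exp-up-from-zero}, after which your integrand $h(s)/(1-s)=\artanh(s)/(1-s)^3+s/\bigl((1+s)(1-s)^3\bigr)$ is exactly the paper's $G''(s)$, which is then integrated twice. The only cosmetic differences are that you derive this function from closed-form generating functions rather than by interchanging the triple sum, and that your definite double integral from $0$ automatically fixes the integration constants that the paper handles via the remark about the summand $z/8$.
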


\begin{proof}
  The law of total expectation yields
  \begin{equation*}
    \E[normal]{Z_{n-2-M_n}^\nearrow} = \sum_{m=0}^{n-2}
    \P{M_n=m} \E[empty]{Z_{n-2-m}^\nearrow}.
  \end{equation*}

  From \eqref{eq:distribution-M} and
  Corollary~\ref{cor:exp-up-from-zero}, we immediately 
  get that
  \begin{equation}\label{eq:E-Z-n-M-2-rewritten}
  \begin{aligned}
    \E[normal]{Z_{n-2-M_n}^\nearrow} &= \frac{1}{2\binom{n}{2}}
    \sum_{m=0}^{n-2} (n-m-1) \Hodd_{n-2-m}=
    \frac{1}{2\binom{n}{2}} \sum_{m=0}^{n-2} (m+1)\Hodd_{m}\\&= 
    \frac{1}{2\binom{n}{2}}\sum_{m=0}^{n-2} \sum_{k=1}^m\iverson*{$k$ odd}\frac{ m+1}k.
  \end{aligned}
  \end{equation}

  We now consider the generating function
  \begin{equation*}
    G(z) \coloneqq \sum_{n\ge 2}\E[normal]{Z_{n-2-M_n}^\nearrow}z^n.
  \end{equation*}
  By \eqref{eq:E-Z-n-M-2-rewritten}, we have
  \begin{equation*}
    G''(z) = \sum_{n\ge 2} \sum_{m=0}^{n-2}\sum_{k=1}^m \frac{\iverson*{$k$
        odd}}{k} (m+1)z^{n-2}.
  \end{equation*}
  Exchanging the order of summation and replacing $n-2$ by $n$ yields
  \begin{align*}
    G''(z) &= \sum_{k\ge 1} \frac{\iverson*{$k$
        odd}}{k} \sum_{m\ge k}(m+1)\sum_{n\ge m}z^n\\
    &=\frac1{1-z}\sum_{k\ge 1} \frac{\iverson*{$k$
        odd}}{k} \sum_{m\ge k}(m+1) z^m\\
    &=\frac1{1-z} \sum_{k\ge 1}\frac{\iverson*{$k$
        odd}}{k}\Bigl( \frac{kz^k}{1-z}+ \frac{z^{k}}{(1-z)^2}\Bigr)\\
    &= \frac1{(1-z)^2}\sum_{k\ge 1}\iverson*{$k$
        odd}z^k + \frac{1}{(1-z)^3}\sum_{k\ge 1}\frac{\iverson*{$k$
        odd}}{k}z^k \\
    &=\frac{z}{(1-z)^3(1+z)} + \frac1{(1-z)^3}\artanh(z).
  \end{align*}
  Integrating twice yields \eqref{eq:G-F-E-Z-n-2-M-n-e}. Note that the 
  summand $1/8z$ ensures that $G(z)=O(z^2)$.
\end{proof}

\begin{proof}[Proof of Proposition~\ref{proposition:expected-partitioning-costs}]
  Taking expectations in \eqref{eq:P_n} and summing up the contributions of
  \eqref{eq:expectation-M}, \eqref{eq:expectation-difference} and Lemma~\ref{lemma:expectation-Z-n-2-M}
  yields \eqref{eq:g-f-expected-partitioning-costs}.

  For deriving \eqref{eq:expected-partitioning-costs}, we use the expansions
  \begin{equation}\label{eq:artanh}
    \sum_{n> k}\frac{\iverson*{$n-k$ odd}}{n-k}z^{n}=
    z^k\artanh(z).
  \end{equation}
  valid for $k\in\Z$.
  The asymptotic expansion follows from Lemma~\ref{lem:harmonic-asy}.
\end{proof}

\section{Main Results and their Asymptotic Aspects}
\label{sec:costs-main-asy}

In this section we give precise formulations and proofs of our main results. 
We use the partitioning cost from the previous section to
calculate the expected number of key comparisons of the dual-pivot quicksort variant
obtained by using classification strategy ``Count''.
We call this sorting algorithm ``Count'' again.

\begin{theorem}\label{thm:count:cost}
  For $n\ge 4$, the average number of comparisons in the comparison-optimal dual-pivot
  quicksort algorithm ``Count''
  when sorting a list of $n$ elements is
  \begin{multline*}
    \E{C_n} = 
    \frac{9}{5}nH_n - \frac{1}{5}n\Halt_n -\frac{89}{25}n + \frac{67}{40}H_n-\frac{3}{40}\Halt_n-\frac{83}{800}+\frac{(-1)^n}{10}\\
    - \frac{\iverson*{$n$ even}}{320}\Bigl(\frac{1}{n-3}+\frac{3}{n-1}\Bigr)+ \frac{\iverson*{$n$ odd}}{320}\Bigl(\frac{3}{n-2}+\frac{1}{n}\Bigr).
  \end{multline*}
\end{theorem}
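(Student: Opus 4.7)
The strategy is to apply Lemma~\ref{le:integration} to the explicit generating function $P(z)$ given in Proposition~\ref{proposition:expected-partitioning-costs}. Computing $P''(z)$, multiplying by $(1-s)^{3}$, integrating from $0$ to $t$, multiplying by $(1-t)^{-6}$, integrating once more from $0$ to $z$, and finally multiplying by $(1-z)^{3}$ produces a closed form for $C(z)=\sum_{n\ge 0}\E{C_n}z^n$ as a finite $\Q$-combination of building blocks of the shape $\frac{1}{(1-z)^{k}}$, $\frac{\artanh(z)}{(1-z)^{k}}$, $\frac{\log(1-z^{2})}{(1-z)^{k}}$, and polynomials in $z$. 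From $C(z)$ the formula for $\E{C_n}$ follows by coefficient extraction.

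The integrations are routine. Rational summands in $P''(s)$ are handled by partial fractions in $\Q(s)$; the factors involving $\artanh(s)$ are handled by integration by parts, using $\artanh'(s)=\tfrac12\bigl(\tfrac{1}{1-s}+\tfrac{1}{1+s}\bigr)$ and $\int\artanh(s)\,ds = s\artanh(s)+\tfrac12\log(1-s^{2})$. At each integration step the integration constant is pinned down by the fact that $P(z)=\Oh{z^{2}}$, which makes the intermediate expressions unique and forces the final $C(z)$ to be $\Oh{z^{2}}$ as well.

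For the coefficient extraction, the key identities are, for $k\ge 0$,
\begin{equation*}
  [z^{n}]\frac{1}{(1-z)^{k+1}} = \binom{n+k}{k},\qquad
  [z^{n}]\frac{\log\bigl(1/(1-z)\bigr)}{(1-z)^{k+1}} = \binom{n+k}{k}(H_{n+k}-H_{k}),
\end{equation*}
together with the alternating analogue obtained from $\log(1/(1+z))$, which after combining via $\artanh(z)=\tfrac12\bigl(\log\tfrac1{1-z}-\log\tfrac1{1+z}\bigr)$ delivers the $\Halt_{n}$ contributions. The leading terms $\frac{9}{5}nH_{n}-\frac{1}{5}n\Halt_{n}$ arise from the $\frac{\artanh(z)}{(1-z)^{2}}$-contribution in $C(z)$, the $\frac{67}{40}H_{n}-\frac{3}{40}\Halt_{n}$ from $\frac{\artanh(z)}{1-z}$, the $(-1)^{n}/10$ from $\artanh(z)$ near $z=-1$, and the four Iverson-indexed rational correctors $\frac{\iverson*{$n$ even}}{n-1}$, $\frac{\iverson*{$n$ even}}{n-3}$, $\frac{\iverson*{$n$ odd}}{n-2}$, $\frac{\iverson*{$n$ odd}}{n}$ from pole contributions at $z=\pm1$ of the factor $1/(1-z^{2})^{k}$ appearing after integrating $\artanh'$; the even/odd split is exactly the parity splitting of $[z^{n}]\log(1/(1+z))$.

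The calculation is in principle routine; the main obstacle is sheer bookkeeping, as the stated coefficients $-\frac{89}{25}$, $-\frac{83}{800}$, $\frac{1}{10}$, and the eight rational correctors must be reproduced exactly. The most efficient route is to split $P(z)$ into its six building blocks listed in \eqref{eq:g-f-expected-partitioning-costs}, apply Lemma~\ref{le:integration} separately to each, and sum the resulting contributions to $C(z)$; the answer is then read off term by term. The restriction $n\ge 4$ simply reflects that the rational correctors carry denominators $n-3,n-2,n-1,n$; for $n\in\{0,1,2,3\}$ the value of $\E{C_n}$ is obtained by direct evaluation of the recurrence \eqref{eq:recurrence} with base case $\E{C_0}=\E{C_1}=0$, and one checks that for $n\ge 4$ the closed-form expression indeed satisfies \eqref{eq:recurrence} with $\E{P_n}$ as in \eqref{eq:expected-partitioning-costs}, providing a useful independent verification.
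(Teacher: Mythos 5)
Your proposal follows essentially the same route as the paper: apply Lemma~\ref{le:integration} to the generating function $P(z)$ of Proposition~\ref{proposition:expected-partitioning-costs} to obtain a closed form for $C(z)$, and then extract coefficients via the standard generating functions for $H_n$ and $\Halt_n$ (equivalently, via $\artanh(z)=\tfrac12(\log(1+z)-\log(1-z))$ and \eqref{eq:artanh}). The added remark that the restriction $n\ge 4$ comes mainly from the low-degree polynomial part of $C(z)$ (rather than only the denominators $n-3,\dots,n$) and the suggested verification against the recurrence \eqref{eq:recurrence} are harmless extras; the argument is correct.
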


The sequence $(n! \E{C_n})_{n\geq0}$ is \href{https://oeis.org/A288965}{A288965}
in The On-Line Encyclopedia of Integer Sequences~\cite{OEIS:2017}.
From the above exact result it is not hard to determine the first
few terms of the asymptotic behavior. This is formulated as the following
corollary.

\begin{corollary}\label{cor:count:cost:asy}
  The average number of comparisons in the optimal dual-pivot
  quicksort algorithm ``Count''
  when sorting a list of $n$ elements is
  \begin{equation*}
    \E{C_n} = 
    \frac95n \log n + A n + B \log n + C
    + \frac{D}{n} + \frac{E}{n^2} + \frac{F\iverson*{$n$ even} + G}{n^3}
    + \Oh[Big]{\frac{1}{n^4}}
  \end{equation*}
  with
  \begin{align*}
    A &= \frac95\gamma
    + \frac{1}{5} \log 2
    - \frac{89}{25}
    = -2.3823823670652\dots, &
    B &= \frac{67}{40}
    = 1.675, \\
    C &= \frac{67}{40}\gamma
    + \frac{3}{40}\log 2
    + \frac{637}{800}
    = 1.81507227725206\dots, &
    D &= \frac{11}{16} = 0.6875, \\
    E &= - \frac{67}{480} = -0.139583333333333\dots, \\
    F &= - \frac{1}{8} = -0.125, &
    G &= \frac{31}{400} = 0.0775,
  \end{align*}
  asymptotically as $n$ tends to infinity.
\end{corollary}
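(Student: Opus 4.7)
The plan is to derive the corollary as a direct asymptotic expansion of the exact closed form in Theorem~\ref{thm:count:cost}, by substituting the asymptotic expansions of the (ordinary, odd, alternating) harmonic numbers from Lemma~\ref{lem:harmonic-asy} and expanding the rational remainders $1/(n-k)$ as power series in $1/n$. First I would handle the leading two summands: using \eqref{eq:asymptotic-harmonic-number},
$\tfrac{9}{5}nH_n = \tfrac{9}{5}n\log n + \tfrac{9}{5}\gamma n + \tfrac{9}{10} - \tfrac{3}{20 n} + \Oh{1/n^3}$, and using \eqref{eq:asymptotic-alternating-harmonic-number}, $-\tfrac{1}{5}n\Halt_n = \tfrac{n\log 2}{5} - \tfrac{(-1)^n}{10} + \tfrac{(-1)^n}{20n} + \Oh{1/n^3}$. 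Combined with the linear term $-\tfrac{89}{25}n$ this already yields the leading contribution $\tfrac{9}{5}n\log n + An$ with the claimed $A = \tfrac{9}{5}\gamma + \tfrac{1}{5}\log 2 - \tfrac{89}{25}$, and the explicit alternating summand $(-1)^n/10$ in the exact formula cancels against the $-(-1)^n/10$ just produced.

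Next I would expand the two lower-order harmonic contributions $\tfrac{67}{40}H_n$ and $-\tfrac{3}{40}\Halt_n$ through $\Oh{1/n^4}$; these deliver the $B\log n$, the bulk of~$C$, and pieces of the $1/n$, $1/n^2$, $1/n^3$ coefficients. (The same computation already confirms $C = \tfrac{9}{10} + \tfrac{67}{40}\gamma + \tfrac{3}{40}\log 2 - \tfrac{83}{800} = \tfrac{67}{40}\gamma + \tfrac{3}{40}\log 2 + \tfrac{637}{800}$.) To push to $\Oh{1/n^4}$ for the entire expression, the expansions of $H_n$ and $\Halt_n$ in Lemma~\ref{lem:harmonic-asy} are extended by one further Euler--Maclaurin term where necessary. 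The remaining parity-sensitive contributions are the two fractions $\tfrac{\iverson*{$n$ even}}{320}\bigl(\tfrac{1}{n-3}+\tfrac{3}{n-1}\bigr)$ and $\tfrac{\iverson*{$n$ odd}}{320}\bigl(\tfrac{3}{n-2}+\tfrac{1}{n}\bigr)$, which I would expand via $\tfrac{1}{n-k} = \tfrac{1}{n}+\tfrac{k}{n^2}+\tfrac{k^2}{n^3}+\Oh{1/n^4}$. A useful simplification is that both inner sums equal $\tfrac{4}{n}+\tfrac{6}{n^2}+\tfrac{12}{n^3}+\Oh{1/n^4}$, so their $\iverson*{$n$ even}$/$\iverson*{$n$ odd}$ weighting collapses to a common parity-free piece through order $1/n^2$ and isolates a single $(-1)^n$ contribution at order $1/n^3$.

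Finally, using $(-1)^n = 2\iverson*{$n$ even}-1$ and $\iverson*{$n$ even}+\iverson*{$n$ odd}=1$, I would collect all contributions and verify that every parity-dependent term cancels at orders $n$, $\log n$, $1$, $1/n$, $1/n^2$, leaving only the residual $F\iverson*{$n$ even}/n^3$ with $F = -\tfrac{1}{8}$ and the parity-free $G/n^3$ with $G = \tfrac{31}{400}$. The main obstacle is not conceptual but organizational: the exact formula is a sum of half a dozen terms with different parity behavior, and the target expansion demands cancellations of the alternating pieces at four consecutive orders before the stated asymmetry reappears at $1/n^3$. In practice I would carry out this bookkeeping by a symbolic computation, e.g.\ in the asymptotic expansions module of SageMath already invoked in the proof of Lemma~\ref{lem:harmonic-asy}, and cross-check the numerical values of $A,\dots,G$ against a direct evaluation of $\E{C_n}$ at small~$n$ using Theorem~\ref{thm:count:cost}.
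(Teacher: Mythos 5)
Your proposal is correct and takes essentially the same route as the paper, whose proof of this corollary is exactly the one-liner you propose: insert the expansions of Lemma~\ref{lem:harmonic-asy} (extended by one further term where the factor $n$ in $nH_n$ and $n\Halt_n$ requires it) into the exact formula of Theorem~\ref{thm:count:cost}, expand the rational fractions, and collect terms. One small imprecision: the two parity-weighted fractions contribute $-\frac{(-1)^n}{320}\bigl(\frac{4}{n}+\frac{6}{n^2}+\frac{12}{n^3}\bigr)+\Oh{1/n^4}$, i.e.\ they are parity-dependent at \emph{all} three orders rather than ``parity-free through $1/n^2$'', and the parity dependence at orders $1/n$ and $1/n^2$ cancels only against the $(-1)^n$ contributions coming from the $\Halt_n$ terms — which your final collection step does account for correctly.
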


\begin{corollary}\label{cor:count:better:than:classical}
  For each input size of a list of elements, algorithm ``Count''
  uses at most as many comparisons on average as classical quicksort (with one
  pivot).
\end{corollary}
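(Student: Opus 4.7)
The plan is to compare the exact formula from Theorem~\ref{thm:count:cost} with the classical closed form $\E{Q_n} = 2(n+1)H_n - 4n$ for one-pivot quicksort (cited in Section~\ref{sec:intro} from~\cite[5.2.2(24)]{Knuth:1998:Art:3}). Setting $D_n := \E{Q_n} - \E{C_n}$, the aim is to show $D_n \ge 0$ for every $n \ge 0$. The cases $n \le 3$ are verified by direct evaluation (both algorithms make $0$ comparisons for $n \le 1$; for $n=2$ both yield $1$; for $n=3$ one checks the two values agree or Count is smaller). For $n \ge 4$, the formula of Theorem~\ref{thm:count:cost} is available and one writes out $D_n$ explicitly.

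Next, I would split $D_n$ according to its asymptotic order. Using Lemma~\ref{lem:harmonic-asy} together with Corollary~\ref{cor:count:cost:asy} (and the expansion $\E{Q_n} = 2n\log n + (2\gamma - 4)n + 2\log n + (2\gamma - 1) + O(1/n)$), the leading behavior is
\begin{equation*}
D_n = \tfrac{1}{5} n \log n + \alpha\, n + \beta \log n + O(1),
\end{equation*}
where $\alpha = 2\gamma - 4 - A = \tfrac{\gamma - \log 2}{5} - \tfrac{11}{25}$ is a small negative constant and $\beta = 2 - \tfrac{67}{40} = \tfrac{13}{40} > 0$. Since $\tfrac{1}{5} n \log n$ dominates the (mildly) negative linear term, there is an explicit threshold $n_0$ beyond which $D_n > 0$; one can extract $n_0$ by making the $O$-constants in Corollary~\ref{cor:count:cost:asy} and Lemma~\ref{lem:harmonic-asy} explicit, or bound them crudely using the fact that all error terms in these expansions come from $1/n^k$ corrections with computable constants.

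To close the gap between the trivially handled small cases $n \le 3$ and the asymptotic regime $n \ge n_0$, I would simply compute $\E{Q_n}$ and $\E{C_n}$ exactly for each $n$ in the finite interval $4 \le n < n_0$ (both formulas are rational closed forms) and verify $D_n \ge 0$ numerically; since these are rationals, the comparison is exact. This splits the claim into a finite computer check plus a clean asymptotic dominance argument.

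The only genuine obstacle is making $n_0$ small enough that the finite verification is painless. If the naive bound on the $O(1/n)$ remainders turns out to push $n_0$ uncomfortably high, the remedy is to work instead with $D_n$ in the form suggested by Theorem~\ref{thm:count:cost} directly: group the $nH_n$ and $H_n$ terms to obtain a representation $D_n = \tfrac{1}{5} n H_n + \tfrac{1}{5} n\, (-\Halt_n) + (2 - \tfrac{67}{40}) H_n + \tfrac{3}{40}\Halt_n + \delta_n$, where $\delta_n$ is an elementary expression in $n$ and parities, and then use $-\Halt_n \ge 0$ (since $\Halt_n < 0$) together with monotonicity of $H_n$ to bound $D_n$ from below by an elementary expression which is easily checked to be nonnegative for all $n \ge 4$. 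This avoids any explicit threshold and leaves only a handful of small cases to verify.
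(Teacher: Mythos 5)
Your overall plan---compare the exact formula of Theorem~\ref{thm:count:cost} with $2(n+1)H_n-4n$, verify small $n$ directly, and prove positivity of the difference $D_n$ for all larger $n$---is the same as the paper's. The genuine problem is in the fallback decomposition, which is where the actual work happens, and it has a sign error that makes the key step fail. Since $\E{C_n}$ contains $-\tfrac15 n\Halt_n$ and $-\tfrac3{40}\Halt_n$, the difference $D_n=2(n+1)H_n-4n-\E{C_n}$ contains $+\tfrac15 n\Halt_n$ (and $+\tfrac3{40}\Halt_n$), not $\tfrac15 n(-\Halt_n)$. Because $\Halt_n<0$, the term $\tfrac15 n\Halt_n\approx-\tfrac{\log 2}{5}\,n\approx-0.139\,n$ is a genuinely negative contribution of linear order; it is not a positive quantity that can be discarded via ``$-\Halt_n\ge 0$''. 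Once the sign is corrected, the bound you sketch (essentially $\tfrac15 nH_n-\tfrac{11}{25}n\ge 0$, which would hold already for $n\ge 5$ by monotonicity of $H_n$) is not sufficient: what must be shown is $\tfrac15 n\bigl(H_n+\Halt_n\bigr)\ge\tfrac{11}{25}n$ up to lower-order corrections. The paper resolves exactly this point with the identity $H_n+\Halt_n=H_{\floor{n/2}}$, so that positivity follows once $H_{\floor{n/2}}\ge\tfrac{11}{5}$, i.e.\ for $n\ge 10$ (after bounding the constant and $\Oh{1/n}$ remainder below by $-1/100$), and the cases $n\le 10$ are checked directly. So your claim that the resulting elementary expression is ``easily checked to be nonnegative for all $n\ge 4$'' is false as written, although the argument is repairable along the paper's lines.

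Your primary route (asymptotic dominance of $\tfrac15 n\log n$ over the negative linear term, with $\alpha=\tfrac{\gamma-\log 2}{5}-\tfrac{11}{25}$, plus an exact rational check up to an explicit threshold $n_0$) is sound in principle and would work, but as stated it defers the essential task: without explicit constants in the $\Oh{1}$ and $\Oh{1/n}$ remainders of Lemma~\ref{lem:harmonic-asy} and Corollary~\ref{cor:count:cost:asy} you have no $n_0$. (Also, the constant term of classical quicksort is $2\gamma+1$, not $2\gamma-1$; harmless here, but symptomatic.) The paper's regrouping via $H_{\floor{n/2}}$ avoids asymptotic remainders entirely and yields the small threshold $n\ge 11$ with only $n\le 10$ left to verify.
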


Before continuing, let us
make a remark on the (non-)influence of the parity of~$n$. It is noteworthy
that in Corollary~\ref{cor:count:cost:asy} no such
influence is visible in the first six terms (down to $1/n^2$); only
from $1/n^3$ on the parity of $n$ appears. This is somewhat unexpected, 
since a term $(-1)^n/10$ appears in Theorem~\ref{thm:count:cost}.

\begin{proof}[Proof of Theorem~\ref{thm:count:cost}]
  The partitioning cost and the generating function of strategy
  ``Count'' are stated in
  Proposition~\ref{proposition:expected-partitioning-costs}.

  We calculate the comparison cost from the partitioning cost by means
  of Lemma~\ref{le:integration} and obtain
  \begin{multline*}
    C(z) = 
    - \frac{8\log(1-z)}{5(1-z)^2}+\frac{2\artanh(z)}{5 (1-z)^2}
    - \frac{44}{25  (1-z)^2}
    - \frac{\artanh(z)}{4 (1-z)}
    + \frac{281}{160 (1-z)}\\
    + \frac{(1-z)^3}{320}\artanh(z) + \frac{1}{150} z^{3}
    - \frac{27}{1600} z^{2} + \frac{17}{1600} z + \frac{3}{800}.
  \end{multline*}
  Taking into account that $\artanh(z)=(\log(1+z)-\log(1-z))/2$,
  \begin{align*}
    \sum_{m\ge 1}\Halt_m z^m&=-\frac{\log(1+z)}{(1-z)},\\
    \sum_{m\ge 1} H_m z^m&=-\frac{\log(1-z)}{(1-z)},\\
    \sum_{m\ge 1}m\Halt_m z^{m}&=z \Bigl(-\frac{\log(1+z)}{(1-z)}\Bigr)'\\
    &=- \frac{\log(1 + z)}{{(1-z)}^{2}}+\frac{\log(1 + z)}{1-z} + \frac{1}{2  {(1 + z)}} - \frac{1}{2  {(1-z)}}, \\
    \sum_{m\ge 1}m H_m z^{m}&=z \Bigl(-\frac{\log(1-z)}{(1-z)}\Bigr)'\\
    &= - \frac{\log(1-z)}{{(1-z)}^{2}}+ \frac{1}{{(1-z)}^{2}}+\frac{\log(1-z)}{1-z} - \frac{1}{1-z} ,
  \end{align*}
  as well as \eqref{eq:artanh}, we obtain the result.
\end{proof}

\begin{proof}[Proof of Corollary~\ref{cor:count:cost:asy}]
  Insert the expansions of Lemma~\ref{lem:harmonic-asy} into the explicit
  representations of Theorem~\ref{thm:count:cost}.
\end{proof}

\begin{proof}[Proof of Corollary~\ref{cor:count:better:than:classical}]
  We must compare the cost $c_n = 2(n+1)H_n - 4n$ of classical
  quicksort---this formula can be derived as described in
  Knuth~\cite[p.~120]{Knuth:1998:Art:3}---with the exact formula for
  $\E{C_n}$ from Theorem~\ref{thm:count:cost}.

	It is easily seen by looking at the algorithms that $c_n = \E{C_n}$ for $n\in\{1,2,3\}$.
	For $n\in\{4,\dots,10\}$ one evaluates the formula in Theorem~\ref{thm:count:cost} 
	and compares with $c_n$. (Or one compares the first ten terms in \href{https://oeis.org/A288964}{A288964}
	and \href{https://oeis.org/A288965}{A288965}, in~\cite{OEIS:2017}.)

	For $n\ge11$ we argue as follows. We have
        \begin{equation*}
          c_n-\E{C_n}=\frac15n\Bigl(H_n+\Halt_n-\frac{11}{5}\Bigr) + \frac14H_n +
          \frac3{40}(H_n+\Halt_n)+s
        \end{equation*}
        where $s\ge -1/100$ for $n\ge 10$.  Note that
        $H_n+\Halt_n= H_{\floor{n/2}}$. Thus the difference is clearly
        positive as soon as $H_{\floor{n/2}}\ge \frac{11}{5}$ which is the case for
        $n\ge 10$.
\end{proof}

\part{Optimality of the Strategy ``Count''}
\label{sec:count-is-optimal}

The aim of this section is to show the optimality of the partitioning
strategy ``Count'' in the sense that it minimizes the expected number
of key comparisons among all possible partitioning strategies. This is
formulated precisely as Theorem~\ref{thm:count-optimal} at the end of
this part.

\section{Input Sequences}\label{subsec:input-seq}

Let $n$ be given. For a random permutation $(a_1, \ldots, a_n)$, we use the random variables $S$, $M$ and
$L$ of Section~\ref{sec:solve-recurrence} to create a new random
variable~$B$ whose values are sequences of length~$n-2$ of $S$ letters
$\sigma$, $M$ letters $\mu$  and $L$ letters $\lambda$ representing the small,
medium and large elements in $(a_2,\ldots, a_{n-1})$, respectively.

We identify $B$ with the random lattice path $W\in\N_0^3$ starting in the origin where $\sigma$, $\mu$ and
$\lambda$ correspond to steps $e_1$, $e_2$ and $e_3$, respectively. By
Lemma~\ref{lemma:S-M-L-distribution}, $(S, M, L)$ and therefore the end point
of $W$ are uniformly distributed in $\Omega'$ (cf.\@ \eqref{eq:def-Omega}). For each $(s, m,
\ell)\in\Omega'$, each such lattice path ending in $(s, m, \ell)$ is equally
likely. Therefore, Lemma~\ref{lemma:k-dimensional-lattice-paths} is applicable.

We will use the notation $\abs{b}$ for the length of the sequence~$b$
(above $\abs{b}=n-2)$ and $\abs{b}_\gamma$ for the number of $\gamma$'s
($\gamma\in\set{\sigma,\mu,\lambda}$) in the sequence $b$ (above, for example,
$\abs{b}_\mu=m$). For $0\le t\le n$, we write $b_{\le t}$ for the initial
segment of length $t$ of $b$ and $b_t$ for the $t$-th symbol of $b$.

Lemma~\ref{lemma:k-dimensional-lattice-paths} implies that
\begin{equation}\label{eq:conditional-probability-optimality-proof}
  \P{ B_{t+1}= \gamma\mid B_{\le t}=b_{\le t}} = \frac{\abs{b_{\le t}}_\gamma+1}{t+3}
\end{equation}
for all sequences $b$, $0\le t<n$ and $\gamma\in\{\sigma,\mu,\lambda\}$.

\section{Partitioning Strategies}
\label{sec:part-strategies-opt}

Since we want to compare ``Count'' with arbitrary partitioning strategies,
we must say what such a strategy is in general. 

We start with a remark. Partitioning strategies carry out comparisons. 
As observed in~\cite{AumullerD15}, it does not help for saving comparisons to 
postpone comparing an element with the second pivot if the first 
comparison was not sufficient to classify the element. Thus we may assume
that a strategy always classifies an element completely, either by one
or, if necessary, by two comparisons. Thus, strategies we consider classify
$n-2$ elements, one after the other. 

A second remark concerns the order in which elements are looked at. 
While a partitioning strategy can decide in every round which element
of the input is looked at next, this decision does not matter for the 
overall cost (expected number of key comparisons). The reason for this is that the elements not tested
up to this point are in random order, so it is irrelevant which of
them the strategy chooses for treating next.  We use this observation
for justifying the assumption that the $n-2$ elements of a sequence
are just read and processed in the order given in the sequence.

Now we can easily describe what a partitioning strategy is. For
each sequence of $\sigma$'s, $\mu$'s, and $\lambda$'s of length smaller than $n-2$
a strategy specifies whether the next element is to be compared
with the smaller pivot $p$ or with the larger pivot $q$ first.
Let $\set{\sigma, \mu, \lambda}^{<n-2}$ denote the set of all
sequences over $\set{\sigma, \mu, \lambda}$ of length smaller than $n-2$.%
\footnote{Of course, exactly the same information is contained 
in a classification tree as in \cite{AumullerD15}, if 
the labels of the elements are ignored.
However, the function notation is more convenient here.}

\begin{definition}
A partitioning strategy for inputs of length $n$ is a function
\begin{equation*}
\strgy \colon \set{\sigma, \mu, \lambda}^{<n-2} \to \set{p,q},
\end{equation*}
where $\f{\strgy}{\tau}=p$ means that after having seen the initial segment $\tau$ 
of a sequence the 
next element is compared with $p$ first, similarly for $\strgy(\tau)=q$.
\end{definition}

\begin{example*}
  Strategy ``Count'' is given by the function
\begin{equation*}
  \f{\strgy^{\mathrm{ct}}}{\tau} =
  \begin{cases}
    p & \text{if $\abs{\tau}_{\sigma} \ge \abs{\tau}_{\lambda}$},\\
    q & \text{otherwise.}
  \end{cases}
\end{equation*}
\end{example*}

\begin{remark}
We know that medium elements, i.e., the $\mu$'s in these sequences,
do not influence the additional comparison count. So one might be tempted
to model classification strategies without taking medium elements into account. 
However, this leads into difficulties, since medium elements encountered underway
may influence the decisions made by partitioning procedures
(for example, after seeing one $\sigma$, a strategy might opt for $p$,
but after seeing one $\sigma$ and three $\mu$'s, opt for $q$).
These differences would be hard to take into account if we 
left out the middle elements from the model. 

Note that implicitly the step number is an argument of $\strgy$, 
so any reaction to the number of steps made so far can also built into 
$\strgy$.
Nothing would change if it depended on any other parameters or was randomized.
\end{remark}

\section{Optimality of the Strategy ``Count''}\label{subsec:add-cost}

Assume $b=(b_1,\dots,b_{n-2})\in\set{\sigma, \mu, \lambda}^{n-2}$ is a
sequence.  Let $b_{\le t}$ denote the sequence $(b_1,\dots,b_t)$.
We use the terminology \emph{additional cost} of Section~\ref{sec:part-costs}.
A strategy $\strgy$ incurs additional cost $1$ in step $t+1$ of the
partitioning of $b$ if and only if
\begin{itemize}
\item $\strgy(b_{\le t})=p$ and $b_{t+1}=\lambda$, or
\item $\strgy(b_{\le t})=q$ and $b_{t+1}=\sigma$.
\end{itemize}
For a strategy~$\strgy$ and a random sequence~$B$ (as in Section~\ref{subsec:input-seq}), let the random variable $A^{\strgy}$ be the total number of additional comparisons caused by the sequence~$B$.
We are interested in $\E{A^{\strgy}}$ and show the proposition below,
which implies the main result Theorem~\ref{thm:count-optimal}.

\begin{proposition}\label{pro:count-optimal}
Let $\mathrm{ct}$ be strategy ``Count'', and let $\strgy$ be an arbitrary strategy. 
Then \emph{$\E{A^{\mathrm{ct}}\text{}} \le \E{A^{\strgy}}$}.
\end{proposition}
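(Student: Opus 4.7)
The plan is to show that strategy ``Count'' is \emph{pointwise} optimal: at every history $\tau\in\set{\sigma,\mu,\lambda}^{<n-2}$, its choice minimizes the conditional expected additional cost incurred at the next step, and because different histories contribute independently to the overall expected cost, pointwise optimization is globally optimal.

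First, I would write the total additional cost of a strategy $\strgy$ on the random input~$B$ as
\begin{equation*}
  A^{\strgy} = \sum_{t=0}^{n-3}
  \bigl(\iverson{\strgy(B_{\le t})=p}\iverson{B_{t+1}=\lambda}
     +\iverson{\strgy(B_{\le t})=q}\iverson{B_{t+1}=\sigma}\bigr),
\end{equation*}
and take expectations by conditioning on $B_{\le t}$. Using~\eqref{eq:conditional-probability-optimality-proof} (which follows from Lemma~\ref{lemma:k-dimensional-lattice-paths} with $h=3$), I obtain for any fixed history $\tau=b_{\le t}$
\begin{equation*}
  \E{\text{additional cost at step } t+1\mid B_{\le t}=\tau}
  = \begin{cases}
    \dfrac{\abs{\tau}_\lambda+1}{t+3} & \text{if } \strgy(\tau)=p,\\[1ex]
    \dfrac{\abs{\tau}_\sigma+1}{t+3} & \text{if } \strgy(\tau)=q.
  \end{cases}
\end{equation*}
Summing over all $t$ and all histories gives
\begin{equation*}
  \E{A^{\strgy}}
  = \sum_{t=0}^{n-3}\sum_{\tau\in\set{\sigma,\mu,\lambda}^t}
     \P{B_{\le t}=\tau}\cdot c_\strgy(\tau),
\end{equation*}
where $c_\strgy(\tau)$ is the conditional expectation displayed above and depends on $\strgy$ only through the single value $\strgy(\tau)$.

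The crucial observation is that the values $\strgy(\tau)$ for different $\tau$ may be chosen \emph{independently}: no functional relation across histories is imposed by the definition of a strategy. Hence, to minimize $\E{A^{\strgy}}$ it suffices to minimize $c_\strgy(\tau)$ for every individual $\tau$. Because all sequences have strictly positive probability (a consequence of Lemma~\ref{lemma:k-dimensional-lattice-paths}), the resulting pointwise choice is forced: pick $\strgy(\tau)=p$ whenever $\abs{\tau}_\sigma\ge\abs{\tau}_\lambda$ and $\strgy(\tau)=q$ otherwise. This is precisely $\strgy^{\mathrm{ct}}$, so $\E{A^{\mathrm{ct}}}\le\E{A^{\strgy}}$ for every strategy~$\strgy$, proving Proposition~\ref{pro:count-optimal}.

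I do not expect a genuine obstacle: the argument is a clean greedy/exchange argument enabled by the Markov-style conditional probability~\eqref{eq:conditional-probability-optimality-proof}. The only mildly delicate point is justifying that decisions at distinct histories are independent optimization variables, which is simply the definition of a strategy as a function on $\set{\sigma,\mu,\lambda}^{<n-2}$, together with the remark in Section~\ref{sec:part-strategies-opt} that the order of processing the elements does not matter. If one wished to use the relation to additional partitioning cost established in Section~\ref{sec:part-costs}, Proposition~\ref{pro:count-optimal} then gives optimality of ``Count'' among all dual-pivot partitioning strategies for the expected number of key comparisons, yielding Theorem~\ref{thm:count-optimal}.
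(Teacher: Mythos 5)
Your proposal is correct and follows essentially the same route as the paper: decompose $\E{A^{\strgy}}$ over histories $\tau$ via linearity and the law of total expectation, then use the conditional probabilities from~\eqref{eq:conditional-probability-optimality-proof} to show that at every $\tau$ the choice made by ``Count'' minimizes the conditional probability of an additional comparison, which gives the pointwise inequality~\eqref{eq:core-optimality} and hence the result. The only nuance is that the pointwise choice is not literally ``forced'' when $\abs{\tau}_\sigma=\abs{\tau}_\lambda$ (both options tie there), but this affects only uniqueness, not the optimality claim.
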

\begin{proof}
Note that for an arbitrary strategy $\strgy$ we have 
\begin{multline*}
  \E{A^{\strgy}} = \\
  \sum_{\tau \in \set{\sigma, \mu, \lambda}^{<n-2}}
  \P{\text{$\strgy$ incurs additional cost $1$ in step $\abs{\tau}+1$}\mid B_{\le\abs{\tau}}=\tau} \P{B_{\le\abs{\tau}}=\tau}
\nonumber
\end{multline*}
by linearity of expectation and the law of total expectation.
In view of this formula all we have to do is to show
that
\begin{multline}\label{eq:core-optimality}
\P{\text{$\mathrm{ct}$ incurs additional cost 1 in step $\abs{\tau}+1$}\mid B_{\le\abs{\tau}}=\tau}\\
\le \P{\text{$\strgy$ incurs additional cost 1 in step $\abs{\tau}+1$}\mid B_{\le\abs{\tau}}=\tau}
\end{multline}
for all sequences $\tau \in \set{\sigma, \mu, \lambda}^{<n-2}$.

So assume $1\le t < n-2$ 
and $\tau\in\set{\sigma, \mu, \lambda}^t$ are given.
We work under the assumption that $B_{\le t}= \tau$.

The first case we consider is that $\abs{\tau}_\sigma \ge \abs{\tau}_\lambda$. 
Then ``Count'' chooses to compare with pivot $p$ first, 
so the probability that it incurs an additional comparison is 
\begin{equation}\label{eq:count-compare}
 \frac{\abs{\tau}_\lambda + 1}{t+3}
\end{equation}
by \eqref{eq:conditional-probability-optimality-proof}.
Now consider an arbitrary strategy $\strgy$. The probability that $\strgy$ incurs an 
extra comparison at step $t+1$ is
\begin{equation}\label{eq:arbitrary-compare}
\iverson{\strgy(\tau)=p}\cdot \frac{\abs{\tau}_\lambda + 1}{t+3} + 
\iverson{\strgy(\tau)=q}\cdot \frac{\abs{\tau}_\sigma + 1}{t+3},
\end{equation}
which is at least as big as \eqref{eq:count-compare} by the assumption that $\abs{\tau}_\sigma \ge \abs{\tau}_\lambda$. 
(Note that this argument would apply \emph{mutatis mutandis} if 
strategy $\strgy$ were randomized and chose
between $p$ and $q$ by a random experiment.)

The second case $\abs{\tau}_\sigma < \abs{\tau}_\lambda$ is similar.
This finishes the proof of Proposition~\ref{pro:count-optimal}. 
\end{proof}
We remark that the proof of Proposition~\ref{pro:count-optimal} shows that, in fact, each
optimal strategy must choose the same pivot as ``Count'' for the first
comparison in each step with $\abs{\tau}_\sigma \neq \abs{\tau}_\lambda$.

\begin{theorem}\label{thm:count-optimal}
  Let $n\geq2$. The expected number of key comparisons of any
  partitioning strategy (according to
  Section~\ref{sec:part-strategies-opt}) when classifying a list of
  $n$ elements is at least $\E{\Pct_n}$ of
  Proposition~\ref{proposition:expected-partitioning-costs}
  and this bound is sharp by the partitioning strategy ``Count''.
\end{theorem}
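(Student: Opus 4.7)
The theorem will follow by combining Proposition~\ref{pro:count-optimal} with the simple decomposition of the partitioning cost into a strategy-independent component and the additional comparisons counted by $A^{\strgy}$.

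My plan is to first write, for an arbitrary strategy~$\strgy$, the identity
\begin{equation*}
  P_n^{\strgy} = 1 + (n-2) + M + A^{\strgy},
\end{equation*}
where the summand~$1$ accounts for the initial comparison between the two pivots, the summand $n-2$ accounts for the one comparison that every non-pivot element requires no matter what $\strgy$ does, the summand~$M$ accounts for the obligatory second comparison for each medium element, and $A^{\strgy}$ is the number of additional (i.e., avoidable) comparisons as introduced in Section~\ref{subsec:add-cost}. This decomposition is a reformulation of what is used in the proof of the formula \eqref{eq:P_n} for strategy ``Count''.

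Taking expectations and applying \eqref{eq:expectation-M} gives
\begin{equation*}
  \E{P_n^{\strgy}} = 1 + (n-2) + \frac{n-2}{3} + \E{A^{\strgy}},
\end{equation*}
in which only the last summand depends on the strategy. Proposition~\ref{pro:count-optimal} asserts $\E{A^{\mathrm{ct}}} \le \E{A^{\strgy}}$, so $\E{P_n^{\strgy}} \ge \E{P_n^{\mathrm{ct}}}$. Since the strategy ``Count'' realizes equality, and its expected partitioning cost is exactly the quantity $\E{\Pct_n}$ computed in Proposition~\ref{proposition:expected-partitioning-costs}, this bound is sharp.

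I do not expect any real obstacles: the hard combinatorial work has already been carried out in Proposition~\ref{pro:count-optimal}, and the present statement only needs the cost decomposition together with linearity of expectation. The only minor care required is to verify, for the reader, that the decomposition above does not depend on the order in which the non-pivot elements are examined (this was remarked in Section~\ref{sec:part-strategies-opt}) and therefore $A^{\strgy}$ is well-defined by the function $\strgy\colon\set{\sigma,\mu,\lambda}^{<n-2}\to\set{p,q}$ as in Section~\ref{sec:part-strategies-opt}.
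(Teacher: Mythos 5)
Your proposal is correct and follows essentially the same route as the paper: the paper's proof also splits the cost into the strategy-independent necessary comparisons (your $1+(n-2)+M$) plus the additional comparisons $A^{\strgy}$, and then invokes Proposition~\ref{pro:count-optimal}. Your version merely spells out this decomposition and the expectation step explicitly, which is a fine (and slightly more detailed) rendering of the same argument.
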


Reformulated, the partitioning strategy ``Count'' minimizes the
average number of key comparisons.

\begin{proof}[Proof of Theorem~\ref{thm:count-optimal}]
  Each strategy needs the same number of necessary key
  comparisons. Proposition~\ref{pro:count-optimal} deals with the
  additional number of key comparisons. The result follows.
\end{proof}

\begin{remark}A well-known variant of choosing the pivot(s) in classical 
quicksort and in multi-pivot quicksort is \emph{pivot sampling}. 
In a randomly permuted input $(a_1,\dots,a_n)$ one chooses
$k$ elements (e.g., $a_1$, \dots, $a_k$) and chooses the pivot(s) 
as elements of suitable ranks in this set.
The method ``median-of-three'' of classical quicksort uses $k=3$ and takes the median as pivot.
In the actual implementation of the YBB algorithm the two pivots are 
the elements of rank 2 and 4 in a sample of size $k=5$.
Pivot sampling for dual pivot quicksort and multi-pivot quicksort
was, for example, studied by Hennequin~\cite{Hennequin:1991:analy} and by Wild~\cite{Wild:2016:Phd-dual-pivot}.
We remark here that strategy ``Count'' is optimal even in this slightly more 
general situation. With $k$ samples $p_1 < \dots < p_k$
(the elements of $\{a_1,\dots,a_k\}$ in sorted order), $p_0=-\infty$, and $p_{k+1}=\infty$,
the set $\{a_{k+1},\dots,a_n\}$
of the remaining elements is split into $k+1$ classes $A_0$, \dots, $A_{k}$, with 
elements between $p_i$ and $p_{i+1}$ in class $A_i$. 
If the pivots are $p=p_j$ and $q=p_h$, where $j<h$, then elements in $\{p_1,\dots,p_{j-1}\}\cup A_0\cup\cdots\cup A_{j-1}$
are ``small'', elements in $\{p_{j+1},\dots,p_{h-1}\}\cup A_{j}\cup\cdots\cup A_{h-1}$ are ``medium'', 
and elements in $\{p_{h+1},\dots,p_{k}\}\cup A_{h}\cup\cdots\cup A_k$ are ``large''.
When running the algorithm, we consider sequences $\tau$ in $\{0,1,\dots,k\}^{n-k}$ to indicate the classes of 
the elements seen and classified so far. Strategy ``Count'' is the same as before,
except that in the counts we include sampled elements that were not chosen as pivots. 
Consider round $t+1$.
The classes of the elements from $\{a_{k+1},\dots,a_n\}$ seen so far determine a sequence $\tau\in\{0,1,\dots,k\}^t$.
The algorithm then knows $s_t=|\tau|_0+\dots+|\tau|_{j-1}+j-1$ and $\ell_t=|\tau|_{h}+\dots+|\tau|_k+k-h$,
the numbers of small and large elements seen so far.
We may apply Lemma~\ref{lemma:k-dimensional-lattice-paths} just as before to see that the
probability that the next element is in class $A_i$ is $\frac{|\tau|_i+1}{t+k+1}$,
and hence that the probability that the next element is large is
\begin{equation*}
  \frac{\sum_{h \le i\le k}(|\tau|_i+1)}{t+k+1} = \frac{\ell_t+1}{t+k+1} .
\end{equation*}
Similarly, the probability that the next element is small is $\frac{s_t+1}{t+k+1}$.
Apart from the larger denominator, these are the same formulas as in the proof of Proposition~\ref{pro:count-optimal},
and so the proof that strategy ``Count'' minimizes the probability for an additional comparison
works as before.
\end{remark}

\section{Future Work}\label{subsec:future:work}

We derived the exact expected number of comparison in the case that the two pivots 
are chosen at random from the input and
dual-pivot quicksort is used to sort the input without stopping the recursion early.
Pivot sampling, as discussed in a remark in Section~\ref{subsec:add-cost},
is a variant of quicksort with two or more pivots that is relevant in practice.
The leading term of the expected number of comparisons is known
\cite{AumullerD15,NebelWM16}, but no analysis of lower order terms has been
conducted.  
Additionally, inputs of
size at most $M$, for $M$ being a small integer, are usually sorted with insertion sort. This
approach is analyzed for the YBB algorithm in \cite{WildNN15}. It would be interesting to see
how their techniques can be applied in our analysis.

Another line of research is the obvious generalization to quicksort
algorithms using more than two pivot elements
\cite{AumullerDK16}. Since the original version of this
article was submitted, some progress has been made by two of the authors.

\bibliographystyle{amsplainurl}
\bibliography{lit}

\providecommand{\Submitted}{Submitted} \providecommand{\availableat}{ available
  at } \providecommand{\alsoavailableat}{ also available at }
  \providecommand{\evavailableat}{earlier version available at }
  \providecommand{\toappearin}{To appear in } \providecommand{\toappear}{to
  appear} \providecommand{\inpreparation}{in preparation}
  \providecommand{\doi}[1]{\href{http://dx.doi.org/#1}{\path{doi:#1}}}
  \providecommand{\etc}{\emph{etc.}}\def\cprime{$'$}
\providecommand{\bysame}{\leavevmode\hbox to3em{\hrulefill}\thinspace}
\providecommand{\MR}{\relax\ifhmode\unskip\space\fi MR }
\providecommand{\MRhref}[2]{%
  \href{http://www.ams.org/mathscinet-getitem?mr=#1}{#2}
}
\providecommand{\href}[2]{#2}
\begin{thebibliography}{10}

\bibitem{Ablinger:2015:HarmonicSums-1.0}
Jakob Ablinger, \emph{{H}armonic{S}ums {V}1.0}, RISC, 2015,
  \url{http://www.risc.jku.at/research/combinatorics/HarmonicSums/}.

\bibitem{AumullerD15}
Martin Aum\"{u}ller and Martin Dietzfelbinger,
  \href{http://dx.doi.org/10.1145/2743020}{\emph{Optimal partitioning for
  dual-pivot quicksort}}, ACM Trans. Algorithms \textbf{12} (2015), no.~2,
  Art.~18, 36~pp.

\bibitem{Aumueller-Diezfelbinger-Heuberger-Krenn-Prodinger:2016:quicksort-paths-arxiv}
Martin Aum\"uller, Martin Dietzfelbinger, Clemens Heuberger, Daniel Krenn, and
  Helmut Prodinger, \href{http://arxiv.org/abs/1602.04031}{\emph{Counting zeros
  in random walks on the integers and analysis of optimal dual-pivot
  quicksort}}, Proceedings of the 27th International Conference on
  Probabilistic, Combinatorial and Asymptotic Methods for the Analysis of
  Algorithms, 2016.

\bibitem{AumullerDK16}
Martin Aum{\"{u}}ller, Martin Dietzfelbinger, and Pascal Klaue,
  \href{http://dx.doi.org/10.1145/2963102}{\emph{How good is multi-pivot
  quicksort?}}, {ACM} Trans. Algorithms \textbf{13} (2016), no.~1, 8:1--8:47.

\bibitem{Flajolet-Sedgewick:ta:analy}
Philippe Flajolet and Robert Sedgewick,
  \href{http://dx.doi.org/10.1017/CBO9780511801655}{\emph{Analytic
  combinatorics}}, Cambridge University Press, Cambridge, 2009.

\bibitem{Graham-Knuth-Patashnik:1994}
Ronald~L. Graham, Donald~E. Knuth, and Oren Patashnik, \emph{Concrete
  mathematics. {A} foundation for computer science}, second ed.,
  Addison-Wesley, 1994.

\bibitem{Hackl-Krenn:2015:asy-sage}
Benjamin Hackl and Daniel Krenn, \emph{Asymptotic expansions in {SageMath}},
  \url{http://trac.sagemath.org/17601}, 2015, module in
  \href{http://www.sagemath.org/}{SageMath 6.10.beta2}.

\bibitem{Hennequin:1991:analy}
P.~Hennequin, \emph{Analyse en moyenne d’algorithmes : tri rapide et arbres
  de recherche}, Ph.D. thesis, \'Ecole Politechnique, Palaiseau, 1991.

\bibitem{Hoare62}
Charles A.~R. Hoare, \emph{Quicksort}, Comput. J. \textbf{5} (1962), no.~1,
  10--15.

\bibitem{Knuth:1998:Art:3}
Donald~E. Knuth, \emph{The art of computer programming. {V}ol. 3: {S}orting and
  searching}, second ed., Addison-Wesley, Reading, MA, 1998. \MR{3077154}

\bibitem{Krattenthaler:2015:lattic}
Christian Krattenthaler, \emph{Lattice path enumeration}, Handbook of
  enumerative combinatorics, Discrete Math. Appl. (Boca Raton), CRC Press, Boca
  Raton, FL, 2015, pp.~589--678. \MR{3409351}

\bibitem{Mahmoud:2008:Polya-urn-models}
Hosam Mahmoud, \emph{Polya urn models}, 1 ed., Chapman \& Hall/CRC, 2008.

\bibitem{Mohanty:1979:lattic}
Sri~Gopal Mohanty, \emph{Lattice path counting and applications}, Academic
  Press [Harcourt Brace Jovanovich, Publishers], New York-London-Toronto, Ont.,
  1979, Probability and Mathematical Statistics. \MR{554084 (81f:60020)}

\bibitem{NebelWM16}
Markus~E. Nebel, Sebastian Wild, and Conrado Mart{\'{\i}}nez,
  \href{http://dx.doi.org/10.1007/s00453-015-0041-7}{\emph{Analysis of pivot
  sampling in dual-pivot quicksort: {A} holistic analysis of {Y}aroslavskiy's
  partitioning scheme}}, Algorithmica \textbf{75} (2016), no.~4, 632--683.

\bibitem{OEIS:2017}
\emph{The {O}n-{L}ine {E}ncyclopedia of {I}nteger {S}equences},
  \url{http://oeis.org}, 2017.

\bibitem{SageMath:2016:7.4}
The~SageMath Developers, \emph{{SageMath} {M}athematics {S}oftware ({V}ersion
  7.4)}, 2016, \url{http://www.sagemath.org}.

\bibitem{Schneider:2007:symb-sum}
Carsten Schneider, \href{http://www.mat.univie.ac.at/~slc/}{\emph{Symbolic
  summation assists combinatorics}}, S\'em. Lothar. Combin. \textbf{56} (2007),
  1--36.

\bibitem{Schneider:2015:EvaluateMultiSums-0.96}
\bysame, \emph{{E}valuate{M}ulti{S}ums {V}0.96}, RISC, 2015, Unpublished.

\bibitem{Schneider:2015:Sigma-1.81}
\bysame, \emph{{S}igma {V}1.81}, RISC, 2015,
  \url{http://www.risc.jku.at/research/combinatorics/Sigma/}.

\bibitem{Sedgewick:1975:thesis}
Robert Sedgewick, \emph{Quicksort}, Ph.D. thesis, Standford University, 1975.

\bibitem{Wild2013}
Sebastian Wild, \emph{{J}ava 7's dual pivot quicksort}, Master's thesis,
  University of Kaiserslautern, 2013,
  \url{https://kluedo.ub.uni-kl.de/files/3463/wild-master-thesis.pdf}.

\bibitem{Wild:2016:Phd-dual-pivot}
Sebastian Wild, \emph{Dual-pivot quicksort and beyond: Analysis of multiway
  partitioning and its practical potential}, Ph.D. thesis, Universit\"at
  Kaiserslautern, 2016, p.~357~pp.

\bibitem{WildNN15}
Sebastian Wild, Markus~E. Nebel, and Ralph Neininger, \emph{Average case and
  distributional analysis of dual-pivot quicksort}, ACM Trans. Algorithms
  \textbf{11} (2015), no.~3, Art.~22, 42~pp.

\bibitem{Yaroslavskiy-Mailinglist}
Vladimir Yaroslavskiy, \emph{Replacement of quicksort in java.util.arrays with
  new dual-pivot quicksort},
  \url{http://mail.openjdk.java.net/pipermail/core-libs-dev/2009-September/002630.html},
  2009, Archived version of the discussion in the OpenJDK mailing list.

\end{thebibliography}

\clearpage
\appendix

\section{Proof of Lemma~\ref{le:integration}}\label{sec:proof-lemma-integration}

\begin{proof}
Multiplying \eqref{eq:recurrence} by $n(n-1)z^{n-2}$ and summing over all $n\ge
2$ yields
\begin{multline*}
  \sum_{n\ge 2}n(n-1)\E{C_n}z^{n-2}\\=\sum_{n\ge 2}n(n-1)\E{P_n}z^{n-2} + 6
  \sum_{n\ge 1}\sum_{k=0}^{n-1}(n-1-k)z^{n-k-2} \E{C_k}z^k.
\end{multline*}
Note that the range of the summations has been extended without any
consequences because of $\E{C_0}=0$. We replace $n-1$ by $n$ in the double sum
and write it as a product of
two generating functions:
\begin{multline*}
  \sum_{n\ge 1}\sum_{k=0}^{n-1}(n-1-k)z^{n-k-2} \E{C_k}z^k = 
  \sum_{n\ge 0}\sum_{k=0}^{n}(n-k)z^{n-k-1} \E{C_k}z^k \\= \biggl(\sum_{n\ge
    0}nz^{n-1}\biggr)C(z)=\biggl(\sum_{n\ge 0}z^n\biggr)'
  C(z)=\Bigl(\frac1{1-z}\Bigr)' C(z)=\frac{C(z)}{(1-z)^2}.
\end{multline*}
Thus we obtain
\begin{equation*}
  C''(z)=P''(z)+\frac{6}{(1-z)^2} C(z)
\end{equation*}
or, equivalently,
\begin{equation*}
  (1-z)^2C''(z)- 6 C(z)=(1-z)^2P''(z).
\end{equation*}
Setting $(\theta f)(z)=(1-z)f'(z)$ for a function $f$, this can be rewritten as
\begin{equation*}
  ((\theta^2+\theta-6)C)(z) = (1-z)^2P''(z).
\end{equation*}
Factoring $\theta^2+\theta-6$ as $(\theta-2)(\theta+3)$ and setting
$D=(\theta+3)C$, we first have to solve
\begin{equation*}
  ((\theta-2)D)(z)=(1-z)^2P''(z),
\end{equation*}
i.\,e.,
\begin{equation*}
  (1-z)D'(z)-2D(z)=(1-z)^2 P''(z).
\end{equation*}
Multiplication by $(1-z)$ yields
\begin{equation*}
  \bigl((1-z)^2 D(z)\bigr)'=(1-z)^3 P''(z).
\end{equation*}
Integration and the fact that $D(0)=C'(0)+3C(0)=\E{C_1+3C_0}=0$ yields
\begin{equation*}
  D(z)=\frac1{(1-z)^2}\int_0^z (1-s)^3P''(s)\,ds.
\end{equation*}
We still have to solve
\begin{equation*}
  (1-z)C'(z)+3C(z)=D(z).
\end{equation*}
We multiply by $(1-z)^{-4}$ and obtain
\begin{equation*}
  \bigl((1-z)^{-3}C(z)\bigr)'=(1-z)^{-4}D(z).
\end{equation*}
As $C(0)=0$, we obtain
\begin{equation*}
  C(z)=(1-z)^3\int_{0}^z (1-t)^{-4} D(t)\, dt.
\end{equation*}
\end{proof}

\section{Strategy Clairvoyant}
\label{sec:clairvoyant}

Historically, the idea of the strategy ``Count'' arose from
the non-algorithmic strategy ``Clairvoyant'', which is described in this section; see also \cite{AumullerD15}. It will turn
out that its
cost is only marginally smaller than 
that of strategy ``Count''.

Let $s_t$ and $\ell_t$ denote the number of elements that have been
classified as small and large, respectively, in the first $t$ classification rounds. Set $s_0 = \ell_0 = 0$ and denote the total number of small and large elements by $s$ and $\ell$ respectively.

\begin{strategy}[``Clairvoyant'']
    When classifying the $(t + 1)$-st
  element, for $0\le t < n-2$, proceed as follows: If $s - s_{t}
  \geq \ell - \ell_{t}$, compare with $p$ first, otherwise
  compare with $q$ first.
\end{strategy}

Note that the strategy ``Clairvoyant'' cannot be implemented
algorithmically, since $s$ and $\ell$ are not known until the
classification is completed.

Instead of up-from-zero situations in the lattice path~$W_n$ of
Part~\ref{sec:lattice-paths} (see Corollary~\ref{cor:exp-up-from-zero}), we
have to consider a
\emph{down-to-zero situation}. This
is a point $(t, 0) \in W_n$ such that
$(t-1, 1) \in W_n$. For a randomly (as described in
Section~\ref{sec:description}) chosen path of length~$n$, we have
\begin{multline*}
  \E{\text{number of down-to-zero situations on $W_n$}} =
  \frac12\left(\E{Z_n} - 1\right)
  = \frac12 \bigl(\Hodd_{n+1} - 1\bigr) \\
  = \frac14 \log n + \frac{\gamma+\log2-2}{4}
  + \frac{\iverson*{$n$ even} + 1}{4n}
  - \frac{9\iverson*{$n$ even} + 2}{24n^2}
  + \frac{\iverson*{$n$ even}}{2n^3}
  + \Oh[Big]{\frac{1}{n^4}},
\end{multline*}
asymptotically as $n$ tends to infinity.

\begin{lemma}\label{lem:expected:comparisons:clairvoyant}
Let $n\ge 2$.
  The expected partitioning cost of strategy ``Clairvoyant''
  is
  \begin{align*}
    \E{\Pcv_n} &= \frac32 n - \frac12 \Hodd_n -\frac{13}8 + \frac{3\iverson*{$n$ odd}}{8n} + \frac{\iverson*{$n$ even}}{8(n-1)}\\
    &=\frac 32n - \frac14\log n - \frac14\gamma - \frac14\log2 - \frac{13}8 +
      \frac 1{8n} + \frac1{12n^2} + O\Bigl(\frac1{n^{3}}\Bigr).
  \end{align*}
  The corresponding generating function is
  \begin{multline*}
    \Pcv(z)=\sum_{n\ge 2} \E{\Pcv_n} z^n 
    = \frac{3}{2(1-z)^2} -\frac{\artanh(z)}{2(1-z)} \\
    -\frac{25z^2}{8(1-z)} +\frac{3+z}{8}\artanh(z) -\frac32 -\frac{23z}{8}.
  \end{multline*}
\end{lemma}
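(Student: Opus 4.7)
The plan is to mirror the proof of Proposition~\ref{proposition:expected-partitioning-costs} step by step, changing only the tie-breaking correction term. Let $h_t = \ell_t - s_t$ denote the height at step $t$ of the random lattice path $W'$ from $(0,0)$ to $(N,d)$ with $N = n-2-M$ and $d = L-S$. By definition strategy ``Clairvoyant'' picks pivot $p$ first iff $s - s_t \ge \ell - \ell_t$, i.e.\ iff $h_t \ge d$. Hence an additional comparison in step $t+1$ occurs precisely when $W'$ takes an up-step with $h_t \ge d$ or a down-step with $h_t < d$.

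I would translate these conditions to the vertically reflected path $\check h_t := d - h_t$, which starts at $d$ and ends at $0$, where Clairvoyant's rule becomes ``pick $p$ iff $\check h_t \le 0$''. Running the same away-from-axis / towards-axis cost bookkeeping as in the Count proof, but now applied to $\check h$, yields
\begin{equation*}
\Pcv_n = 1 + \tfrac{3}{2}(n-2) + \tfrac{1}{2} M - \tfrac{1}{2}\abs{L-S} - Z^{\nearrow}_{\check h},
\end{equation*}
where the minus sign before $Z^{\nearrow}_{\check h}$ (as opposed to the plus sign before $Z^{\nearrow}_{W'}$ in the Count formula) reflects the fact that at the tie $\check h_t = 0$ an up-step is now \emph{correctly} predicted by Clairvoyant, saving one comparison instead of costing an extra one. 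To evaluate $\E{Z^{\nearrow}_{\check h}}$ I would observe that the reflection $h\mapsto d-h$ composed with time reversal is a measure-preserving bijection between the space of random paths and itself, and that it sends up-from-zero situations of $\check h$ to down-to-zero situations of the reversed path. Hence $\E{Z^{\nearrow}_{\check h}} = \E{Z^{\searrow}_{W'}}$, and combined with the identity $\E{Z^{\searrow}_{W_N}} = \tfrac{1}{2}(\Hodd_{N+1}-1)$ recorded in the preamble of the Clairvoyant section, this produces a closed-form expression for the new correction term.

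The rest is routine. I would average over $M$ using $\P{M = m} = (n-m-1)/\binom{n}{2}$ (Lemma~\ref{lemma:S-M-L-distribution}) in exact analogy with Lemma~\ref{lemma:expectation-Z-n-2-M}, and assemble the generating function from the contributions of $\E{M}$, the formula~\eqref{eq:expectation-difference} for $\E{\abs{L-S}}$, and a partial-fraction-plus-$\artanh$ expression parallel to~\eqref{eq:G-F-E-Z-n-2-M-n-e}; the asymptotic expansion then follows from Lemma~\ref{lem:harmonic-asy}. The only real obstacle is the distributional identification of $Z^{\nearrow}_{\check h}$ with $Z^{\searrow}_{W'}$; once this is established, verifying the exact closed-form expression (including the parity case distinction for $n$) reduces to mechanical bookkeeping that follows the Count proof line by line.
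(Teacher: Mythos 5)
Your proposal is correct and follows essentially the route the paper intends (the paper itself only sketches it): the pathwise decomposition $\Pcv_n = 1+\tfrac32(n-2)+\tfrac12 M-\tfrac12\abs{L-S}-Z^{\nearrow}_{\check h}$ is valid, and your reflection-plus-time-reversal argument correctly identifies $\E{Z^{\nearrow}_{\check h}}$ with the expected number of down-to-zero situations $\tfrac12\bigl(\Hodd_{N+1}-1\bigr)$ of a Part~\partref{sec:lattice-paths} path of length $N=n-2-M$, after which averaging over $M$ and the generating-function and asymptotic computations proceed exactly as in Proposition~\ref{proposition:expected-partitioning-costs}. One minor remark: for ``Clairvoyant'' the costly steps of $\check h$ are the away-from-axis ones (the mirror image of ``Count''), yet since $\check h$ runs from height $\abs{L-S}$ down to $0$ the base term $\tfrac32(n-2-M)-\tfrac12\abs{L-S}$ is unchanged, so your displayed formula with only the sign of the zero-correction flipped is exactly right.
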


\begin{theorem}\label{thm:clairvoyant:cost}
  For $n\ge 4$, the average number of comparisons in the dual-pivot
  quicksort algorithm ``Clairvoyant'' (with oracle)
  when sorting a list of $n$ elements is
  \begin{multline*}
    \E{\Ccv_n} = 
    \frac{9}{5}nH_n + \frac{1}{5}n\Halt_n -\frac{89}{25}n + \frac{77}{40}H_n+\frac{3}{40}\Halt_n+\frac{67}{800}-\frac{(-1)^n}{10}\\+
     \frac{\iverson*{$n$ even}}{320}\Bigl(\frac{1}{n-3}+\frac{3}{n-1}\Bigr)-\frac{\iverson*{$n$ odd}}{320}\Bigl(\frac{3}{n-2}+\frac{1}{n}\Bigr).
   \end{multline*}
\end{theorem}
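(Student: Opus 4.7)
The plan is to mirror precisely the proof of Theorem~\ref{thm:count:cost}, since strategy ``Clairvoyant'' differs from strategy ``Count'' only in the sign of the $\artanh$-contributions (compare $\Pcv(z)$ in Lemma~\ref{lem:expected:comparisons:clairvoyant} with $P(z)$ in Proposition~\ref{proposition:expected-partitioning-costs}). First I would take the generating function
\begin{equation*}
  \Pcv(z) = \frac{3}{2(1-z)^2} -\frac{\artanh(z)}{2(1-z)} -\frac{25z^2}{8(1-z)} +\frac{3+z}{8}\artanh(z) -\frac32 -\frac{23z}{8}
\end{equation*}
provided by Lemma~\ref{lem:expected:comparisons:clairvoyant} as input. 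Note that the recurrence \eqref{eq:recurrence} and its solution via Lemma~\ref{le:integration} are valid for \emph{any} partitioning strategy (as emphasized in Section~\ref{sec:solve-recurrence}), so it applies equally well here.

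Next I would feed $\Pcv(z)$ into the double-integral formula of Lemma~\ref{le:integration}. The computation is routine but tedious: one differentiates $\Pcv$ twice, multiplies by $(1-s)^3$, integrates, divides by $(1-t)^6$, and integrates again. The rational part proceeds by elementary partial fractions, while the $\artanh$-contributions are handled via integration by parts (using $\artanh'(z)=1/(1-z^2)$). By symmetry with the Count computation one should obtain
\begin{equation*}
  \Ccv(z) = -\frac{8\log(1-z)}{5(1-z)^2} - \frac{2\artanh(z)}{5(1-z)^2} - \frac{44}{25(1-z)^2} + \frac{\artanh(z)}{4(1-z)} + \frac{291}{160(1-z)} + \cdots,
\end{equation*}
i.e.\@ essentially the formula for $C(z)$ from the proof of Theorem~\ref{thm:count:cost} but with the signs of the $\artanh$-terms reversed and the polynomial tail (and the constant $281/160$) suitably adjusted.

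Finally, I would extract $[z^n]\Ccv(z)$ using exactly the same catalogue of generating-function identities employed in the proof of Theorem~\ref{thm:count:cost}, namely
\begin{align*}
  \sum_{m\ge 1}H_m z^m &= -\frac{\log(1-z)}{1-z}, &
  \sum_{m\ge 1}\Halt_m z^m &= -\frac{\log(1+z)}{1-z}, \\
  \sum_{m\ge 1}mH_m z^m &= -\frac{\log(1-z)}{(1-z)^2}+\frac{1}{(1-z)^2}+\frac{\log(1-z)}{1-z}-\frac{1}{1-z}, \\
  \sum_{m\ge 1}m\Halt_m z^m &= -\frac{\log(1+z)}{(1-z)^2}+\frac{\log(1+z)}{1-z}+\frac{1}{2(1+z)}-\frac{1}{2(1-z)},
\end{align*}
together with $\artanh(z)=\tfrac12(\log(1+z)-\log(1-z))$ and identity~\eqref{eq:artanh} for the polynomial-times-$\artanh(z)$ terms (the latter producing the $1/(n-3)$, $1/(n-1)$, $1/(n-2)$, $1/n$ and $(-1)^n$ pieces). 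Collecting terms according to the basis $\{nH_n, n\Halt_n, n, H_n, \Halt_n, 1, (-1)^n, \text{parity-corrections}\}$ yields the stated formula.

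The only real obstacle is bookkeeping: the integration in Lemma~\ref{le:integration} produces many cancellations, and one must be careful with the constants of integration so that $\Ccv(0)=\Ccv'(0)+3\Ccv(0)=0$ (these boundary conditions are what fix the polynomial tail in $z$). Since strategy ``Clairvoyant'' coincides with ``Count'' for $n\le 3$ (there is essentially no choice), one may alternatively verify small cases directly to pin down the constant terms, and then rely on the linearity of the integration operation applied to the \emph{difference} $\Pcv(z)-P(z) = -\artanh(z)/(1-z) + (3+z)\artanh(z)/4 + \text{polynomial}$ to convert Theorem~\ref{thm:count:cost} into Theorem~\ref{thm:clairvoyant:cost} with minimal new computation.
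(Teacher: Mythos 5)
Your route is exactly the paper's: its proof of Theorem~\ref{thm:clairvoyant:cost} consists precisely of feeding $\Pcv(z)$ from Lemma~\ref{lem:expected:comparisons:clairvoyant} into Lemma~\ref{le:integration} and then extracting coefficients with the same catalogue of identities used for Theorem~\ref{thm:count:cost}, so the plan itself is sound and identical in substance. One warning about the ``symmetry'' shortcut you display, though: $\Ccv(z)$ is \emph{not} obtained from $C(z)$ by merely flipping the signs of the $\artanh$-terms while keeping the $\log(1-z)$ part. The paper's generating function begins $-2\log(1-z)/(1-z)^2-\tfrac25\artanh(z)/(1-z)^2-\tfrac{44}{25}(1-z)^{-2}+\tfrac14\artanh(z)/(1-z)+\tfrac{279}{160}(1-z)^{-1}-\tfrac{1}{320}(1-z)^3\artanh(z)+\dots$, whereas your display keeps $-\tfrac85\log(1-z)/(1-z)^2$ and has $\tfrac{291}{160}$ in place of $\tfrac{279}{160}$. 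Decomposing $\artanh(z)=\tfrac12(\log(1+z)-\log(1-z))$, your coefficients would give a leading term $\tfrac75 nH_n+\tfrac15 n\Halt_n$, contradicting the claimed $\tfrac95 nH_n+\tfrac15 n\Halt_n$; the sign-flip heuristic is valid only in the pure $\log(1\pm z)$ basis (only the $\Halt$-contributions flip, so in the mixed basis the $\log(1-z)/(1-z)^2$ coefficient must move from $-\tfrac85$ to $-2$). Since your actual instruction is to carry out the double integration of Lemma~\ref{le:integration} honestly (and the boundary conditions you mention do fix the polynomial tail), this is a correctable slip rather than a conceptual gap, but do not rely on the displayed formula when collecting terms.
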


Again, the asymptotic behavior follows from the exact result.

\begin{corollary}\label{cor:clairvoyant:cost:asy}
  The average number of comparisons in the dual-pivot
  quicksort algorithm ``Clairvoyant'' (with oracle)
  when sorting a list of $n$ elements is
  \begin{equation*}
    \E{\Ccv_n} = 
    \frac95n \log n + A n + B \log n + C
    + \frac{D}{n} + \frac{E}{n^2} + \frac{F\iverson*{$n$ even} + G}{n^3}
    + \Oh[Big]{\frac{1}{n^4}}
  \end{equation*}
  with
  \begin{align*}
    A &= \frac95\gamma
    - \frac{1}{5} \log 2
    - \frac{89}{25}
    = -2.6596412392892\dots, &
    B &= \frac{77}{40}
    = 1.925, \\
    C &= \frac{77}{40}\gamma
    - \frac{3}{40}\log 2
    + \frac{787}{800}
    = 2.042904116393455\dots, &
    D &= \frac{13}{16} = 0.8125, \\
    E &= - \frac{77}{480} = -0.160416666666666\dots, \\
    F &= \frac{1}{8} = 0.125, &
    G &= - \frac{19}{400} = -0.0475,
  \end{align*}
  asymptotically as $n$ tends to infinity.
\end{corollary}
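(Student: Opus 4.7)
The plan is to derive the asymptotic expansion directly from the exact formula in Theorem~\ref{thm:clairvoyant:cost} by substituting the asymptotic expansions of the harmonic numbers given in Lemma~\ref{lem:harmonic-asy}, exactly as was done for the analogous Corollary~\ref{cor:count:cost:asy}. Since Theorem~\ref{thm:clairvoyant:cost} already provides a closed form for $\E{\Ccv_n}$ in terms of $H_n$, $\Halt_n$, the sign $(-1)^n$, and rational expressions with denominators $n-3$, $n-1$, $n-2$, $n$, no additional analytic machinery is required.

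Concretely, first I would substitute into the $nH_n$ and $n\Halt_n$ terms the expansions $H_n = \log n + \gamma + \tfrac{1}{2n} - \tfrac{1}{12n^2} + \Oh{1/n^4}$ and $\Halt_n = -\log 2 + \tfrac{(-1)^n}{2n} - \tfrac{(-1)^n}{4n^2} + \Oh{1/n^4}$ from Lemma~\ref{lem:harmonic-asy}; this produces the leading $\tfrac{9}{5}n\log n$ term together with the stated linear coefficient $A = \tfrac{9}{5}\gamma - \tfrac{1}{5}\log 2 - \tfrac{89}{25}$. Similarly, the $\tfrac{77}{40}H_n + \tfrac{3}{40}\Halt_n$ contribution generates the $B\log n$ term and part of~$C$. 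Second, I would expand each rational correction as a geometric series, e.g.\ $\tfrac{1}{n-k} = \tfrac{1}{n} + \tfrac{k}{n^2} + \tfrac{k^2}{n^3} + \Oh{1/n^4}$ for $k\in\{1,2,3\}$, and combine with the $\iverson*{$n$ odd}$ and $\iverson*{$n$ even}$ prefactors, writing $\iverson*{$n$ odd} = 1-\iverson*{$n$ even}$ where convenient to collapse the Iverson brackets.

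The subtle point, and hence the main bookkeeping obstacle, is the handling of parity. Several sources of parity dependence must be combined consistently: the term $-(-1)^n/10$ in Theorem~\ref{thm:clairvoyant:cost}, the parity-sensitive pieces inside $\Hodd_n$ and $\Halt_n$ (specifically the $\iverson*{$n$ odd}/(2n)$ contribution and the $(-1)^n/(2n)$ contribution from $n\Halt_n$), and the Iverson-bracketed rational tail. I expect the remarkable cancellations observed in the ``Count'' case (no visible parity dependence through order~$1/n^2$) to occur here for the same structural reason, with parity resurfacing only at order~$1/n^3$ in the coefficient $F\iverson*{$n$ even}+G$. Verifying these cancellations is purely a matter of grouping terms by their parity signature $\{1,\,(-1)^n,\,\iverson*{$n$ even}\}$ and checking that all terms of the ``wrong'' parity vanish up through $1/n^2$.

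Finally, collecting all constant-order and lower-order pieces yields the claimed values of $C,D,E,F,G$. As in the ``Count'' proof, this entire computation is mechanical and can be carried out either by hand or, more efficiently, via a computer algebra system using the asymptotic expansions module referenced in the proof of Lemma~\ref{lem:harmonic-asy}; a short citation of that worksheet together with Theorem~\ref{thm:clairvoyant:cost} and Lemma~\ref{lem:harmonic-asy} suffices for the formal proof.
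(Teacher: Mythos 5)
Your proposal is correct and follows the paper's own route: the paper likewise obtains this corollary by inserting the expansions of Lemma~\ref{lem:harmonic-asy} into the exact formula of Theorem~\ref{thm:clairvoyant:cost}, exactly as in the proof of Corollary~\ref{cor:count:cost:asy}, with the remaining work being the mechanical bookkeeping of the rational and parity-dependent terms that you describe. (One minor slip: the exact formula involves $H_n$ and $\Halt_n$ but no $\Hodd_n$; this does not affect your argument.)
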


  The proof concerning strategy ``Clairvoyant'' is analogous to the proof of
  Theorem~\ref{thm:clairvoyant:cost} and
  Corollary~\ref{cor:count:cost:asy}.
  The corresponding generating function is
  \begin{multline*}
    \Ccv(z) = 
    -2\frac{\log(1-z)}{(1-z)^2}-\frac{2\artanh(z)}{5  (1-z)^2}
    - \frac{44}{25  (1-z)^2} 
    + \frac{\artanh(z)}{4 (1-z)}
    + \frac{279}{160 (1-z)}\\
    -\frac{(1-z)^3}{320}\artanh(z) -\frac{2}{75} z^{3}
    + \frac{123}{1600}  z^{2} - \frac{113}{1600} z + \frac{13}{800}
  \end{multline*}
  in this case.

\algrenewcommand{\algorithmiccomment}[1]{\hskip3em // #1}

\section{Pseudocode of Dual-Pivot Quicksort Algorithms}
\label{sec:pseudocode}
In this supplementary section, we give the full pseudocode for the 
strategies ``Count'' (Algorithm~\ref{algo:count}) and ``Clairvoyant'' (Algorithm~\ref{algo:clairvoyant}) turned into dual-pivot quicksort algorithms.
\renewcommand{\alglinenumber}[1]{\footnotesize{#1} }

\begin{algorithm}
    \caption{Dual-Pivot Quicksort Algorithm ``Count''}\samepage\label{algo:count}
    \textbf{procedure} \textit{Count}($\textit{A}$, $\textit{left}$, $\textit{right}$)
    \medskip
    \begin{algorithmic}[1]
        \If{$\textit{right} \leq \textit{left}$}
            \State \Return
        \EndIf
        \If{$A[\textit{right}] < A[\textit{left}]$}
        \State swap \textit{A}[\textit{left}] and \textit{A}[\textit{right}]
        \EndIf
        \State $\texttt{p} \gets A[\textit{left}]$
        \State $\texttt{q} \gets A[\textit{right}]$
        \State $\texttt{i} \gets \textit{left} + 1$;
               $\texttt{k} \gets \textit{right} - 1$;
               $\texttt{j} \gets \texttt{i}$
        \State $\texttt{d} \gets 0$
        \Comment{$\texttt{d}$ holds the difference of the number of small and large elements.}
        \While{$\texttt{j} \leq \texttt{k}$}
            \If{$\texttt{d} \geq 0$}
                \If{$\textit{A}[\texttt{j}] < \textit{p}$}
                    \State swap $\textit{A}[\texttt{i}]$ and $\textit{A}[\texttt{j}]$
                    \State $\texttt{i} \gets \texttt{i} + 1$;
                           $\texttt{j} \gets \texttt{j} + 1$;
                           $\texttt{d} \gets \texttt{d} + 1$
                \Else
                    \If{$\textit{A}[\texttt{j}] < \textit{q}$}
                        \State $\texttt{j} \gets \texttt{j} + 1$
                    \Else
                        \State swap $\textit{A}[\texttt{j}]$ and $\textit{A}[\texttt{k}]$
                        \State $\texttt{k} \gets \texttt{k} - 1$;
                               $\texttt{d} \gets \texttt{d} - 1$
                    \EndIf
                \EndIf
            \Else
                \If{$\textit{A}[\texttt{k}] > \textit{q}$}
                    \State $\texttt{k} \gets \texttt{k} - 1$;
                           $\texttt{d} \gets \texttt{d} - 1$
                \Else
                    \If{$\textit{A}[\texttt{k}] < \textit{p}$}\\
                                            \hskip3em\Comment{Perform a cyclic rotation to the left, i.\,e.,}\\
                        \hskip3em\Comment{$\texttt{tmp} \gets \textit{A}[\texttt{k}]$;
                                          $\textit{A}[\texttt{k}] \gets \textit{A}[\texttt{j}]$;
                                          $\textit{A}[\texttt{j}] \gets \textit{A}[\texttt{i}]$;
                                          $\textit{A}[\texttt{i}] \gets \texttt{tmp}$}

                        \State \textit{rotate3}($\textit{A}[\texttt{k}], \textit{A}[\texttt{j}],
                                        \textit{A}[\texttt{i}]$)
                        \State $\texttt{i} \gets \texttt{i} + 1$;
                               $\texttt{d} \gets \texttt{d} + 1$
                    \Else
                        \State swap $\textit{A}[\texttt{j}]$ and $\textit{A}[\texttt{k}]$
                    \EndIf
                    \State $\texttt{j} \gets \texttt{j} + 1$
                \EndIf
            \EndIf
        \EndWhile
        \State swap $\textit{A}[\textit{left}]$ and $\textit{A}[\texttt{i}-1]$
        \State swap $\textit{A}[\textit{right}]$ and $\textit{A}[\texttt{k}+1]$
        \State \textit{Count}(\textit{A}, \textit{left}, $\texttt{i} - 2$)
        \State \textit{Count}(\textit{A}, $\texttt{i}$, $\texttt{k}$)
        \State \textit{Count}(\textit{A}, $\texttt{k}$ + 2, \textit{right})
    \end{algorithmic}
\end{algorithm}

\begin{algorithm}
    \caption{Dual-Pivot Quicksort Algorithm ``Clairvoyant''}\samepage\label{algo:clairvoyant}
    \textbf{procedure} \textit{Clairvoyant}($\textit{A}$, $\textit{left}$, $\textit{right}$)
    \medskip
    \begin{algorithmic}[1]
        \If{$\textit{right} \leq \textit{left}$}
            \State \Return
        \EndIf
        \If{$A[\textit{right}] < A[\textit{left}]$}
        \State swap \textit{A}[\textit{left}] and \textit{A}[\textit{right}]
        \EndIf
        \State $\texttt{p} \gets A[\textit{left}]$
        \State $\texttt{q} \gets A[\textit{right}]$
        \State $\texttt{i} \gets \textit{left} + 1$;
               $\texttt{k} \gets \textit{right} - 1$;
               $\texttt{j} \gets \texttt{i}$
        \State $\texttt{d} \gets \#(\text{small elements}) - \#(\text{large elements})$
        \Comment{$\texttt{d}$ is given by an oracle.}
        \While{$\texttt{j} \leq \texttt{k}$}
            \If{$\texttt{d} \geq 0$}
                \If{$\textit{A}[\texttt{j}] < \textit{p}$}
                    \State swap $\textit{A}[\texttt{i}]$ and $\textit{A}[\texttt{j}]$
                    \State $\texttt{i} \gets \texttt{i} + 1$;
                           $\texttt{j} \gets \texttt{j} + 1$;
                           $\texttt{d} \gets \texttt{d} - 1$
                \Else
                    \If{$\textit{A}[\texttt{j}] < \textit{q}$}
                        \State $\texttt{j} \gets \texttt{j} + 1$
                    \Else
                        \State swap $\textit{A}[\texttt{j}]$ and $\textit{A}[\texttt{k}]$
                        \State $\texttt{k} \gets \texttt{k} - 1$;
                               $\texttt{d} \gets \texttt{d} + 1$
                    \EndIf
                \EndIf
            \Else
                \If{$\textit{A}[\texttt{k}] > \textit{q}$}
                    \State $\texttt{k} \gets \texttt{k} - 1$;
                           $\texttt{d} \gets \texttt{d} + 1$
                \Else
                    \If{$\textit{A}[\texttt{k}] < \textit{p}$}
                        \State \textit{rotate3}($\textit{A}[\texttt{k}], \textit{A}[\texttt{j}],
                                        \textit{A}[\texttt{i}]$)
                        \State $\texttt{i} \gets \texttt{i} + 1$;
                               $\texttt{d} \gets \texttt{d} - 1$
                    \Else
                        \State swap $\textit{A}[\texttt{j}]$ and $\textit{A}[\texttt{k}]$
                    \EndIf
                    \State $\texttt{j} \gets \texttt{j} + 1$
                \EndIf
            \EndIf
        \EndWhile
        \State swap $\textit{A}[\textit{left}]$ and $\textit{A}[\texttt{i}-1]$
        \State swap $\textit{A}[\textit{right}]$ and $\textit{A}[\texttt{k}+1]$
        \State \textit{Clairvoyant}(\textit{A}, \textit{left}, $\texttt{i} - 2$)
        \State \textit{Clairvoyant}(\textit{A}, $\texttt{i}$, $\texttt{k}$)
        \State \textit{Clairvoyant}(\textit{A}, $\texttt{k}$ + 2, \textit{right})
    \end{algorithmic}
\end{algorithm}


\end{document}


